%% file: main.tex
\documentclass[accepted]{uai2026} 
                        
\usepackage[american]{babel}

\usepackage{natbib} 
\usepackage{mathtools} 
\usepackage{booktabs} 
\usepackage{tikz} 
\usepackage{defs}

\title{Quantized Stochastic Primal–Dual Methods for Distributed Optimization under Relaxed Global Geometry}

\author[1,*]{Susmit~Sarkar}
\author[2,*]{\href{mailto:<abhinavriitb11@gmail.com>?Subject=Your UAI 2026 paper}{Abhinav~Raghuvanshi}{}}
\author[2]{\href{mailto:<chakrabarti.k1@tcs.com>?Subject=Your UAI 2026 paper}{Kushal~Chakrabarti}{}}
\author[1,2]{\href{mailto:<mbaranwal@iitb.ac.in>?Subject=Your UAI 2026 paper}{Mayank~Baranwal}{}}
\affil[1]{%
    Indian Institute of Technology Bombay\\
    Mumbai, Maharashtra, India
}
\affil[2]{%
    Tata Consultancy Services Research\\
    Thane, Maharashtra, India
}
\affil[*]{%
    Equal Contribution
  }
   
\begin{document}
\maketitle

\begin{abstract}

  We study distributed optimization with stochastic gradients and finite-bit communication modeled by random (unbiased) quantization. We propose \texttt{q-PDGD}, a quantized stochastic primal–dual method, and analyze it under relaxed global geometry. Under restricted secant inequality (RSI), a constant step-size yields linear contraction to an explicit neighborhood determined by gradient noise, quantization distortion, and network connectivity, while a diminishing step-size achieves $\bigO(1/k)$ convergence without shared-minimizer assumptions. Under Polyak–Łojasiewicz (PL) inequality, we obtain linear-to-neighborhood convergence in the same stochastic quantized setting. Our results match the best-known centralized stochastic rates in oracle complexity, and are supported by experiments demonstrating the predicted tradeoffs between quantization level, step-size choice, and graph structure.
\end{abstract}

\input{sections/intro}

\input{sections/related_work}
\input{sections/notations}

\input{sections/main_result}

\input{sections/experiments}
\input{sections/conclusion}

\clearpage

\bibliography{uai2026-template}

\newpage

\include{sections/supplementary}

\end{document}

%% file: sections/intro.tex
\section{Introduction}\label{sec:intro}
Distributed optimization is a central primitive in large-scale learning and networked control, enabling agents to cooperatively minimize a global objective while keeping data and computation local~\citep{yang2019survey}. It underpins federated and data-parallel learning~\citep{yu2024provable}, large-scale empirical risk minimization~\citep{jahani2020efficient}, distributed estimation in sensor/IoT networks~\citep{chong2018review}, and network resource allocation/control~\citep{nedic2018distributed}. In many such systems, communication is the dominant bottleneck, motivating finite-bit (\emph{quantized}) protocols.

Quantization has been studied extensively, with early control-oriented approaches using deterministic uniform quantizers (often with saturation) in consensus/ADMM/primal-dual message passing, which introduces bias and steady-state errors that must be controlled~\citep{zhu2016quantized}. Modern learning further requires \emph{stochastic gradients} (mini-batches/noisy measurements), creating a second persistent error source. Recent results show that sharp sublinear rates can still be attainable under random quantization in distributed gradient methods with appropriate conditions and stepsizes; in particular, \cite{dutta20241} establishes $\bigO(1/k)$ convergence under specific assumptions. However, existing guarantees for stochastic quantized distributed optimization often emphasize server-based architectures or primal consensus/gradient-tracking schemes, and typically yield neighborhood convergence under constant step-sizes or rely on restrictive structural assumptions (e.g., shared minimizers) for exact convergence.

Motivated by these gaps, we study \emph{primal-dual} methods for decentralized stochastic optimization under \emph{random (unbiased) quantized} communication. We develop and analyze \texttt{q-PDGD}, providing convergence guarantees under relaxed global geometry (RSI/PL) and explicit tradeoffs among step-size, quantization resolution, network connectivity, and stochastic noise. Our main contributions are:
\begin{itemize}
    \item \textbf{Stochastic quantized PDGD under RSI.} We establish convergence of a stochastic primal-dual gradient method with unbiased random quantization (\texttt{q-PDGD}) under RSI, without strong convexity or shared-minimizer assumptions.
    \item \textbf{Rates with constant and diminishing step-sizes.} For constant step-sizes, we prove linear contraction to an explicit noise/quantization/network-dependent neighborhood; for diminishing step-sizes, we obtain $\bigO(1/k)$ rates in the strongly-convex/RSI regime.
    \item \textbf{PL regime and experiments.} Under PL, we show linear-to-neighborhood convergence under stochasticity and quantization, and we empirically validate the predicted step-size/bit-rate tradeoffs and the linear-to-neighborhood vs.\ sublinear behavior.
\end{itemize}

%% file: sections/related_work.tex
\section{Related Work}\label{sec:RelatedWork}
Finite-bit communication in distributed optimization has been studied from classic \emph{deterministic} quantizers (often with clipping) to modern stochastic/structured compressors. Deterministic approaches appear in quantized consensus and ADMM-style primal--dual message passing, which characterize steady-state/consensus errors under finite-capacity links~\citep{zhu2016quantized}. Beyond fixed quantizers, algorithm-quantizer co-design yields stronger rates: \cite{pu2016quantization} develop iteratively-refining schemes for distributed proximal-gradient-type methods and show linear decay of quantization error and linear convergence under suitable interval conditions, while \cite{rikos2023distributed} combine gradient descent with a finite-time quantized-consensus primitive over directed graphs and obtain linear-to-neighborhood convergence for strongly convex objectives.

A complementary abstraction models quantization as a \emph{compression operator} with either unbiasedness and bounded second moment or contractive/relative-error properties, unifying \texttt{QSGD}~\citep{alistarh2017qsgd}, \texttt{TernGrad}~\citep{wen2017terngrad}, and \texttt{signSGD}~\citep{bernstein2018signsgd}. To mitigate bias and stabilize learning, \emph{error-feedback} methods such as \texttt{EF21} provide clean guarantees, including improved rates for smooth nonconvex objectives and linear rates under PL-type conditions~\citep{richtarik2021ef21}. Related ideas compress gradient differences (\texttt{DIANA}) for sharper guarantees~\citep{mishchenko2025distributed} and use double quantization to reduce communication~\citep{yu2019double}.

Rates also depend on architecture: server-based methods often admit explicit accuracy--communication tradeoffs under unbiased/difference compression~\citep{alistarh2017qsgd,mishchenko2025distributed,yu2019double}, whereas decentralized methods must control consensus/tracking error in addition to compression noise; \texttt{CHOCO-SGD} analyzes decentralized learning under broad compressor models and general nonconvex objectives~\citep{koloskova2019decentralized}. For constrained network optimization, quantized primal--dual dynamics with adaptive encoding/decoding can achieve linear convergence to the exact optimum for convex global costs while relating bandwidth to rate~\citep{chen2023quantized}. Finally, recent stochastic-approximation analyses of random quantization identify conditions/stepsizes achieving sharp sublinear rates, including $\bigO(1/k)$~\citep{dutta20241}.

In contrast, we develop and analyze \texttt{q-PDGD} under RSI/PL with stochastic gradients and unbiased quantization, yielding explicit neighborhood and rate tradeoffs.

%% file: sections/notations.tex
\section{Notations \& Preliminaries}
\label{sec:notations}

For any positive integer $n$, let $[n]$ denote the set $\{1, \dots, n\}$.
We denote the concatenated column vector of vectors $z_i \in \mathbb{R}^{p_i}$ for $i \in [k]$ by $\text{col}(z_1, \dots, z_k)$.
Let $\mathbf{1}_n$ and $\mathbf{0}_n$ denote the column vectors of all ones and all zeros of dimension $n$, respectively.
$\bI_n$ denotes the $n$-dimensional identity matrix.
Given a vector $[x_1, \dots, x_n]^\top \in \mathbb{R}^n$, $\text{diag}([x_1, \dots, x_n])$ represents a diagonal matrix with the $i$-th diagonal element being $x_i$.
The notation $\boldsymbol{A} \otimes \boldsymbol{B}$ denotes the Kronecker product of matrices $\boldsymbol{A}$ and $\boldsymbol{B}$.

For a matrix $\boldsymbol{A}$, $\text{rank}(\boldsymbol{A})$, $\text{image}(\boldsymbol{A})$, and $\text{null}(\boldsymbol{A})$ denote its rank, image space, and null space, respectively.
For two symmetric matrices $\boldsymbol{M}$ and $\boldsymbol{N}$, the inequality $\boldsymbol{M} \succeq \boldsymbol{N}$ indicates that $\boldsymbol{M} - \boldsymbol{N}$ is positive semi-definite.
The spectral radius of a matrix is denoted by $\rho(\cdot)$, and $\rho_2(\cdot)$ indicates the smallest non-zero eigenvalue for matrices having positive eigenvalues.
$\|\cdot\|$ represents the standard Euclidean norm for vectors or the induced 2-norm for matrices.
For a given positive semi-definite matrix $\boldsymbol{A}$, the weighted norm is denoted by $\|\bx\|_{\boldsymbol{A}} = \sqrt{\bx^\top \boldsymbol{A} \bx}$.
For a differentiable function $g$, $\nabla g$ denotes the gradient of $g$.
Finally, $\Proj_{S}(x) = \arg\min_{y \in S} \|x-y\|^2$ denotes the projection of a vector $x \in \R^n$ onto the set $S \subseteq \R^n$.

\begin{definition} \label{def:rsi}
A differentiable function $f: \R^n \to \R$ satisfies the Restricted Secant Inequality (RSI) with constant $\nu > 0$ if $(\nabla f(x) - \nabla f(\Proj_{X^*}(x))^{\top} (x-\Proj_{X^*}(x)) \geq \nu \norm{x-\Proj_{X^*}(x)}^2 \, \forall \bx \in \R^n$, where $X^*$ is the set of global minimizers of $f$~\citep{shi2015extra, yi2020exponential}.
\end{definition}

\begin{definition} \label{def:PL}
A differentiable function $f: \R^n \to \R$ satisfies the Polyak--Łojasiewicz (PL) inequality with constant $\nu > 0$ if $\frac{1}{2}\|\nabla f(x)\|^2 \ge \nu \bigl(f(x) - f^*\bigr),
\quad \forall x \in \mathbb{R}^n$, where $f^* = \min_{x \in \R^n} f(x)$~\citep{karimi2016linear, yi2020linear}.
\end{definition}

The PL inequality is the weakest condition among others, such as strong convexity, essential strong convexity, weak strong convexity, and restricted secant inequality, that leads to {\em linear} convergence of deterministic centralized gradient-based methods~\citep{karimi2016linear}. Both RSI and PL are weaker than strong convexity. For example, all linear least-squares problem $f(x) = \|Ax-b\|^2$ satisfy PL, but not necessarily strongly convex. Non-convex functions like $f(x) = x^2 + 3 \sin^2(x)$, which satisfy PL, demonstrate the condition's generality beyond strong convexity. Similarly, all quasi-strongly convex functions or composition of strongly convex function with linear map plus a linear term satisfy RSI. One such example function is presented later in Section~\ref{sec:exp}. Note that, neither of RSI and PL conditions require the function to be convex.

\begin{definition} \label{def:graph}
Let $\mathcal{G} = (\mathcal{V}, \mathcal{E}, A)$ be a weighted undirected graph,
where $\mathcal{V} = [n]$ is the set of nodes, 
$\mathcal{E} \subseteq \mathcal{V} \times \mathcal{V}$ is the set of edges,
and $A = A^\top = (a_{ij})$ is the weighted adjacency matrix with 
$a_{ij} \ge 0$ and $a_{ii} = 0$. Nodes $i$ and $j$ can communicate if $(i,j) \in \mathcal{E}$, i.e., $a_{ij} > 0$.
The neighbor set and weighted degree of node $i$ are defined as
$\mathcal{N}_i = \{ j \in [n] : a_{ij} > 0 \}, \,
\deg_i = \sum_{j=1}^n a_{ij}$.
Let $D = \mathrm{diag}([\deg_1, \dots, \deg_n])$ denote the degree matrix. The weighted Laplacian matrix is defined as $L = D - A$.
The graph $\mathcal{G}$ is said to be connected if there exists a path
between any pair of nodes.
\end{definition}

%% file: sections/main_result.tex
\section{Problem Setup and the \texttt{\lowercase{q}-PDGD} Algorithm}
\label{sec:problem}


Consider a network of $n$ agents, where each agent $i \in \{1, \dots, n\}$ has a local cost function $f_i: \R^d \to \R$. The agents collaborate to solve the following distributed optimization problem:
\begin{equation}
    \min_{x \in \R^d} \; f(x) \coloneqq \sum_{i=1}^n f_i(x).
    \label{eq:global_problem}
\end{equation}
Let $x^*$ denote an optimal solution of \eqref{eq:global_problem}, and let $X^* \coloneqq \arg\min_{x \in \R^d} f(x)$ be the corresponding optimal set. Each agent maintains a local copy $x_i \in \R^d$ of the solution $x^*$.
For simplicity, we define the stacked variables and function:
\begin{equation}
    \bx \coloneqq \text{col}(x_1, \dots, x_n) \in \R^{nd}, \quad F(\bx) \coloneqq \sum_{i=1}^n f_i(x_i).
\end{equation}
The optimal set in the stacked space is $\calX \coloneqq \{\mathbf{1}_n \otimes x^* : x^* \in X^*\}$. Further, we let $f^* \coloneqq \min_{x \in \mathbb{R}^d} f(x)$
denote the optimal value of the global objective.

\paragraph{Network Model.}
The communication among agents is described by an undirected weighted graph $\mathcal{G} = (\mathcal{V}, \mathcal{E}, A)$ with $\mathcal{V} = \{1, \dots, n\}$ as defined by Definition~\ref{def:graph}. The graph is assumed to be connected. Let $L$ denote the weighted Laplacian matrix associated with $\mathcal{G}$, and define the augmented Laplacian $\bL \coloneqq L \otimes \bI_d$.

\paragraph{Random Quantization Model.}
Each agent $i$ quantizes its decision variable $x_i$ using a quantization function $Q(\cdot)$, resulting in the quantized value $Q(x_i)$. We consider a quantization scheme characterized by the hyperparameters $\{l, u, b\}$, where $[l, u]$ denotes the quantization domain and $b$ denotes the number of bits available for communication.
We partition the interval $[l, u]$ into $B = 2^b - 1$ equal-length bins.\footnote{Note that $B=2^b-1$ bins result in $B+1 = 2^b$ quantization levels, which can be uniquely represented by a $b$-bit codeword.} The length of each bin, referred to as the \textit{quantization resolution}, is given by $\Delta \coloneqq \frac{u-l}{B}$. Let $\{\tau_m\}_{m=1}^{B+1}$ denote the quantization levels (endpoints), where $\tau_1 = l$ and $\tau_{B+1} = u$.
Given a value $x \in [l, u]$, we identify the interval such that $x \in [\tau_i, \tau_{i+1})$. We define the relative location of $x$ within the bin as $p = \frac{x - \tau_i}{\Delta}$. The stochastic quantizer $Q: [l, u] \to \{\tau_1, \ldots, \tau_{B+1}\}$ is defined as:
\begin{equation}
    Q(x) = \begin{cases}
    \tau_i, & \text{with probability } 1-p, \\
    \tau_{i+1}, & \text{with probability } p.
    \end{cases}
\end{equation}
In this setting, at every iteration, agent $i$ transmits the quantized value $Q(x_i)$, with its neighboring agents. For a vector $x\in \R^d$, quantization is applied element-wise. We refer to $\varepsilon(x) \coloneqq Q(x) - x$ as the \textit{quantization error}.




\begin{lemma}[\citep{reisizadeh2018quantized}] \label{lem:quant}
For any input $z \in [l, u]$, the random quantization scheme $Q(\cdot)$ is unbiased and has bounded variance. Specifically, it satisfies the following properties:
\begin{align}
\mathbb{E}[Q(z) \mid z] &= z, \, \qquad
\mathbb{E}[|Q(z) - z|^2 \mid z] \le \frac{\Delta^2}{4}. \label{eq:quant_variance}
\end{align}
\end{lemma}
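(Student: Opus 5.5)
The plan is a direct computation from the definition of $Q$, conditioning on $z$ and using only the two-point distribution on the bin containing $z$. Fix $z \in [l,u]$ and let $i$ be the index with $z \in [\tau_i, \tau_{i+1})$. Write $p = (z-\tau_i)/\Delta \in [0,1)$. The endpoint $z = u$ is handled separately: there we may take $p=0$ in the last bin, or equivalently $Q(u)=u$ deterministically, and both claims hold trivially. Conditional on $z$, the only randomness is the Bernoulli choice between $\tau_i$ and $\tau_{i+1} = \tau_i + \Delta$.

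\emph{Unbiasedness.} Compute
\[
\mathbb{E}[Q(z)\mid z] = (1-p)\tau_i + p(\tau_i+\Delta) = \tau_i + p\Delta = z,
\]
where the last equality is the definition of $p$.

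\emph{Variance bound.} The error $Q(z)-z$ takes the value $\tau_i - z = -p\Delta$ with probability $1-p$, and the value $\tau_{i+1}-z = (1-p)\Delta$ with probability $p$. Therefore
\[
\mathbb{E}[|Q(z)-z|^2\mid z] = (1-p)p^2\Delta^2 + p(1-p)^2\Delta^2 = p(1-p)\Delta^2 .
\]
This is a Bernoulli variance scaled by $\Delta^2$. Since $p(1-p)\le 1/4$ on $[0,1]$, with the maximum at $p=1/2$ by AM--GM, the bound $\Delta^2/4$ follows.

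For vector inputs the quantizer acts elementwise with independent randomization. The same argument then applies coordinatewise, giving unbiasedness of $Q(x)$ and $\mathbb{E}[\|Q(x)-x\|^2\mid x]\le d\Delta^2/4$. This vector form is presumably the version used later in the paper.

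There is no real obstacle in this argument. The only point needing care is the bookkeeping at the right endpoint $z=u$ and at bin boundaries, where $p=0$ makes $Q$ deterministic. Both identities still hold there.
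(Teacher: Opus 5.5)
Your proof is correct and is the standard argument: the paper gives no proof of its own and simply cites Reisizadeh et al., where unbiasedness and the variance bound come from exactly this two-point computation, $\mathbb{E}[|Q(z)-z|^2\mid z]=p(1-p)\Delta^2\le \Delta^2/4$. (Two cosmetic points: at $z=u$ the consistent convention is to place $u$ in the closed last bin $[\tau_B,\tau_{B+1}]$ with $p=1$, rather than $p=0$, though either way $Q(u)=u$ almost surely; and the vector bound $d\Delta^2/4$ follows from linearity of expectation over coordinates, so independence of the coordinatewise randomization is not needed.)
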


\noindent \textbf{Quantization Noise.}
We define the quantization noise for agent $i$ at iteration $k$ as $\varepsilon_i(k) \coloneqq Q(x_i(k)) - x_i(k)$. We let $\calF_k$ represent the filtration that contains the history of all variables generated by an algorithm up to iteration $k$. Since the quantization is performed element-wise on the vector $x_i(k) \in \mathbb{R}^d$, the properties established in Lemma~\ref{lem:quant} imply that the noise is zero-mean and has bounded variance conditioned on $\calF_k$:
\begin{align}
    \mathbb{E}[\varepsilon_i(k) \mid \calF_k] = 0, \qquad
    \mathbb{E}[\|\varepsilon_i(k)\|^2 \mid \calF_k] \le \frac{d \Delta^2}{4}. \label{eq:noise_bound}
\end{align}

\subsection{Assumptions}
\label{sub:ass}

We adopt the following assumptions for the analysis.

\begin{assumption}
\label{ass:diff}
The set of optimal solutions $X^* = \arg\min_{x\in\R^d} f(x)$
is nonempty and convex.
\end{assumption}

\begin{assumption}
\label{ass:smooth}
Each $f_i:\R^d \to \R$ is continuously differentiable and has a globally Lipschitz continuous gradient with constant $L_{f_i} > 0$, i.e.,
$\norm{\nabla f_i(a) - \nabla f_i(b)} \le L_{f_i}\norm{a-b}, \quad \forall a,b \in \R^d$.

\end{assumption}

\begin{assumption}
\label{ass:rsi}
The global objective function $f(x)$ satisfies the Restricted Secant Inequality (RSI) condition with constant $\nu > 0$, i.e.,
\begin{multline}
    (\nabla f(x) - \nabla f(\Proj_{X^*}(x)))^\top (x - \Proj_{X^*}(x)) \\
    \ge \nu \norm{x - \Proj_{X^*}(x)}^2, \quad \forall x \in \R^d.
\end{multline}
\end{assumption}

\begin{assumption}
\label{ass:PL}
The global objective function $f(x)$ satisfies the Polyak--Łojasiewicz (PL) inequality with constant $\nu > 0$, i.e.,
\begin{align}
\frac{1}{2}\|\nabla f(x)\|^2 \ge \nu \bigl(f(x) - f^*\bigr),
\quad \forall x \in \mathbb{R}^d.
\end{align}
\end{assumption}

\begin{assumption}
\label{ass:singleton}
The set $\{\nabla F(\bx) : \bx \in \calX\}$ is a singleton. 
\end{assumption}

\begin{assumption}
\label{ass:connectivity}
The communication graph $\mathcal{G}$ between agents is undirected and connected. 
\end{assumption}

\begin{assumption}
\label{ass:stoch_grad}
Each agent has access to an unbiased stochastic gradient $g_i(x_i(k), \zeta_i(k)) = \nabla f_i(x_i(k)) + \zeta_i(k)$ with bounded variance:
\begin{equation}
    \E[\zeta_i(k) \mid \calF_k] = 0, \qquad \E[\norm{\zeta_i(k)}^2 \mid \calF_k] \le \sigma^2.
\end{equation}
\end{assumption}


From Assumption~\ref{ass:smooth}, we define the maximum Lipschitz constant across all agents as $L_f \coloneqq \max_{i \in [n]} L_{f_i}$. 
Assumption~\ref{ass:singleton} implies that the aggregate gradient is invariant across the optimal set $\calX$; specifically, the sum of local gradients at any optimal solution is unique. 
Finally, Assumption~\ref{ass:connectivity} ensures that the Laplacian $L$ has a simple zero eigenvalue. Consequently, its second-smallest eigenvalue $\rho_2(L) > 0$.

Compared to the analysis of Distributed Consensus Stochastic Gradient (\texttt{DCSG}) method under the same quantization model from~\cite{dutta20241}, we do not assume $X^*$ being singleton, and crucially, we do not assume that a global minimizer $x^* \in X^*$ also minimizes each $f_i$. Furthermore, the RSI condition in Assumption~\ref{ass:rsi} and PL condition in Assumption~\ref{ass:PL} are both weaker than the strong convexity of global objective in~\cite{dutta20241}.

Now, we review below in Lemma~\ref{lem:rsi_augmented} a prior result from~\cite{yi2020exponential}, an implication of the RSI condition, that is  pivotal for our key results presented in the next section.

\begin{lemma}[\citep{yi2020exponential}]
\label{lem:rsi_augmented}
Let Assumptions \ref{ass:diff}--\ref{ass:rsi} hold. If $\alpha > \frac{2nL_f^2 + \nu L_f}{\nu\rho_2(L)}$, then for all $\bx \in \R^{nd}$,
\begin{multline}
    (\nablaF(\bx) - \nablaF(\Proj_{\calX}(\bx)))^\top (\bx - \Proj_{\calX}(\bx)) + \alpha \bx^\top \bL \bx \\
    \ge \nu_1 \norm{\bx - \Proj_{\calX}(\bx)}^2,
\end{multline}
where $\nu_1 = \min \left\{ \frac{\nu}{2n}, \alpha\rho_2(L) - \frac{2nL_f^2 + \nu L_f}{\nu} \right\} > 0$.
\end{lemma}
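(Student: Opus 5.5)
The approach is to split $\bx - \Proj_{\calX}(\bx)$ into a consensus part and a disagreement part. The RSI of Assumption~\ref{ass:rsi} controls the consensus part, and the Laplacian term controls the disagreement part. The cross terms that mix the two are absorbed with Young's inequality, and the threshold on $\alpha$ is exactly what this absorption requires.

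\emph{Step 1 (the projection).} Let $\bar x = \frac1n\sum_i x_i$ and $y^* = \Proj_{X^*}(\bar x)$. By Assumption~\ref{ass:diff} and continuity of $\nabla f$, $X^*$ is nonempty, closed and convex, so $y^*$ is well defined. For $y\in X^*$,
\[
\sum_i\|x_i-y\|^2 = \sum_i\|x_i-\bar x\|^2 + n\|\bar x-y\|^2 .
\]
Hence $\Proj_{\calX}(\bx) = \mathbf 1_n\otimes y^*$. Set $\bx_\perp = \bx - \mathbf 1_n\otimes\bar x$ and $e=\bar x-y^*$. Then $\bx-\Proj_{\calX}(\bx) = \bx_\perp + \mathbf 1_n\otimes e$, and by orthogonality
\[
\|\bx-\Proj_{\calX}(\bx)\|^2 = \|\bx_\perp\|^2 + n\|e\|^2 .
\]
Since $\bL(\mathbf 1_n\otimes\bar x)=0$ and $\bx_\perp$ is orthogonal to $\mathrm{null}(\bL)$ (Assumption~\ref{ass:connectivity}), we also get $\bx^\top\bL\bx = \bx_\perp^\top \bL\bx_\perp \ge \rho_2(L)\|\bx_\perp\|^2$.

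\emph{Step 2 (the gradient term).} Write
\[
\nabla F(\bx)-\nabla F(\mathbf 1_n\otimes y^*) = \bigl[\nabla F(\bx)-\nabla F(\mathbf 1_n\otimes \bar x)\bigr] + \bigl[\nabla F(\mathbf 1_n\otimes \bar x)-\nabla F(\mathbf 1_n\otimes y^*)\bigr]
\]
and pair each bracket with $\bx_\perp + \mathbf 1_n\otimes e$. This gives three contributions.
\begin{itemize}
\item The second bracket against $\mathbf 1_n\otimes e$ equals $(\nabla f(\bar x)-\nabla f(y^*))^\top e$, which is at least $\nu\|e\|^2$ by Assumption~\ref{ass:rsi}.
\item The second bracket against $\bx_\perp$ is at least $-\sqrt n L_f\|e\|\|\bx_\perp\|$. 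This uses Assumption~\ref{ass:smooth} and $\|\mathbf 1_n\otimes e\| = \sqrt n\|e\|$.
\item The first bracket has norm at most $L_f\|\bx_\perp\|$, so against $\bx_\perp+\mathbf 1_n\otimes e$ it contributes at least $-L_f\|\bx_\perp\|^2 - \sqrt n L_f\|\bx_\perp\|\|e\|$.
\end{itemize}
Adding $\alpha\bx^\top\bL\bx$, the left-hand side is therefore at least
\[
\nu\|e\|^2 - 2\sqrt n L_f\|e\|\|\bx_\perp\| + (\alpha\rho_2(L)-L_f)\|\bx_\perp\|^2 .
\]

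\emph{Step 3 (absorbing the cross term).} This is the only delicate point, because the constants must match the statement. Young's inequality gives
\[
2\sqrt n L_f\|e\|\|\bx_\perp\| \le \tfrac{\nu}{2}\|e\|^2 + \tfrac{2nL_f^2}{\nu}\|\bx_\perp\|^2 .
\]
The bound then becomes
\[
\tfrac{\nu}{2n}\, n\|e\|^2 + \Bigl(\alpha\rho_2(L) - \tfrac{2nL_f^2+\nu L_f}{\nu}\Bigr)\|\bx_\perp\|^2 \;\ge\; \nu_1\bigl(n\|e\|^2 + \|\bx_\perp\|^2\bigr) = \nu_1\|\bx-\Proj_{\calX}(\bx)\|^2 .
\]
The hypothesis on $\alpha$ is exactly what makes the second coefficient positive, so $\nu_1>0$.

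Assumption~\ref{ass:singleton} is not needed for this argument. The main thing to verify carefully is the identification of $\Proj_{\calX}$ in Step 1, since everything afterwards relies on it.
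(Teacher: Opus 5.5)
Your proposal is correct and follows essentially the same route as the paper. Both arguments use the same average/disagreement split with $\Proj_{\calX}(\bx)=\mathbf 1_n\otimes\Proj_{X^*}(\bar x)$, the same expansion of the gradient inner product (one RSI term plus Lipschitz cross terms), the same Laplacian bound, and the same Young's inequality; the paper writes the last step in terms of $\|\mathbf a-\bx^*\|=\sqrt n\,\|e\|$, which is your bound rescaled. Your explicit check of the projection identity, and your remark that the $\rho_2(L)$ bound relies on connectivity rather than Assumption~\ref{ass:singleton}, are slightly more careful than the paper's write-up.
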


\subsection{Stochastic Quantized PDGD Algorithm}
\label{sub:algo}

As discussed in Section~\ref{sec:intro}, primal-dual gradient descent (PDGD) offers an effective framework for decentralized optimization. We now introduce \texttt{q-PDGD}, a quantized stochastic extension of PDGD whose deterministic roots trace back to~\citep{kia2015distributed,yi2020exponential}. Our focus is the practically relevant regime with \emph{simultaneous} inexactness from stochastic gradients and random (unbiased) quantized communication, as formalized in Section~\ref{sec:problem}.

At each iteration $k \geq 0$, each agent $i$ updates its primal and dual variables as
\begin{tcolorbox}[colback=white, colframe=black, sharp corners]
\begin{align}
    x_i(k+1) &= x_i(k) \notag\\
    &\!\!\!\!\!\!\!\!\!\!\!\!\! - h \left(\!\alpha \sum_{j=1}^{n} L_{ij} Q(x_j(k)) + \beta v_i(k) + g_i(x_i(k))\!\right) \notag \\
    v_i(k+1) &= v_i(k) + h \beta \sum_{j=1}^{n} L_{ij} Q(x_j(k)). \tag{\texttt{q-PDGD}} \label{eq:update_law}
\end{align}
\end{tcolorbox}
Here, $x_i(0)$ are arbitrarily initialized, whereas $v_i(0) = \mathbf{0}_d$. The hyperparameters $\alpha, \beta$ and stepsize $h$ are positive-valued. Note that only the primal quantized variable $Q(x_j(k))$ is communicated between neighbors.

\begin{remark}
    Using Definition~\ref{def:graph}, the primal update in Algorithm~\eqref{eq:update_law} can be rewritten as 
    \begin{align}
        x_i(k+1) = & x_i(k) - h \alpha \left(Q(x_i(k) - \sum_{j=1}^{n} a_{ij} Q(x_j(k))\right) \notag \\
        & - h \beta v_i(k) - h g_i(x_i(k)) \notag \\
        = & (1-h\alpha) x_i(k) + h\alpha \sum_{j=1}^{n} a_{ij} Q(x_j(k)) \notag \\
        & - h \beta v_i(k) - h g_i(x_i(k)).
    \end{align}
Upon comparing it with the \texttt{DCSG} algorithm from~\cite{dutta20241}
\begin{align}
    x_i(k+1) = & (1-\beta_{k})x_i(k) + \beta_{k} \sum_{j=1}^{n} a_{ij} Q(x_j(k)) \notag\\
    & -\alpha_{k} g_i(x_i(k)), \label{eq:update_law_DCSG} 
\end{align}
we note that $h \alpha$ in~\eqref{eq:update_law} is analogous to $\beta_k$ in~\texttt{DCSG}. The \texttt{DCSG} $\beta$-mixing term is equivalent to a Laplacian-driven consensus step in~\eqref{eq:update_law}, with $\beta_k$ corresponding to the effective consensus step size $h \alpha$. Although both algorithms implement consensus + local innovation dynamics under quantized communication, the additional dual variable $v_i$ in~\eqref{eq:update_law} serves as a disagreement-tracking mechanism. Unlike the \texttt{DCSG} $\beta_k$-mixing, which explicitly controls convex averaging, $v_i$ accumulates consensus errors and feeds them back into the primal dynamics. This feedback introduces a correction that mitigates persistent disagreement caused by mixing of quantized primal variables.
\end{remark}

Let $\varepsilon(k)$ denote the stacked quantization noise vector such that $Q(\bx(k)) = \bx(k) + \varepsilon(k)$. Similarly, let $\zeta(k) = \text{col}(\zeta_1(k), \dots, \zeta_n(k))$ denote the stacked stochastic gradients. Then, the algorithm~\eqref{eq:update_law} can be written in compact stacked form as
\begin{subequations}
\label{eq:update_law_stacked}
\begin{align}
    & \bx(k+1) = \bx(k) \notag\\
    &\quad \!\!- h \left(\!\alpha \bL \bx(k) + \beta \bv(k) + \nabla \!F(\bx(k)) + \alpha \bL \varepsilon(k) + \zeta(k)\!\right),\\
    & \bv(k+1) = \bv(k) + h \beta \left( \bL \bx(k) + \bL \varepsilon(k) \right).
\end{align}
\end{subequations}
Upon defining the state $\bz(k) = [\bx(k)^\top, \bv(k)^\top]^\top$, the nominal field $G(\bz)$, and total noise $\xi(k)$:
\begin{equation}
    G(\bz) \! = \! \begin{bmatrix} \alpha \bL\bx + \beta \bv + \nablaF(\bx) \\ -\beta \bL\bx \end{bmatrix},
    \xi(k) \! = \! \begin{bmatrix} \alpha \bL\varepsilon(k) + \zeta(k) \\ -\beta \bL\varepsilon(k) \end{bmatrix},
\end{equation}
the algorithm dynamics~\eqref{eq:update_law_stacked} becomes
\begin{equation}
    \bz(k+1) = \bz(k) - h(G(\bz(k)) + \xi(k)). \label{eq:z_dyn}
\end{equation}

\section{Key Results and Convergence Analysis of \texttt{\MakeLowercase{q}-PDGD}}
\label{sec:conv}

In this section, we present the convergence guarantees of the \texttt{q-PDGD} algorithm described in Section~\ref{sub:algo}. We first present below the guarantees of~\eqref{eq:update_law} under RSI (Assumption~\ref{ass:rsi}), followed by the guarantee under PL (Assumption~\ref{ass:PL}).

To present the results under RSI, we introduce the following additional notations:
\begin{itemize}
    \item $\eta = \sqrt{2} \max\left\{\frac{2\epsilon_1}{\rho_2(L)} + \alpha + 1, 4\epsilon_1 + 1\right\} > 0$,
    \item $\epsilon_1 = \max \left\{ \frac{1}{\nu_1} \left( \frac{L_f^2}{2\beta} + \beta \rho(L) \right), \frac{1}{2} \right\}$,
    \item $\epsilon_2 = \min \left\{ \frac{\beta}{2}, \epsilon_1 \nu_1 \right\}$,
    \item $\epsilon_3 = \max \left( \epsilon_1 + \frac{1}{2}, \frac{\epsilon_1}{\rho_2(L)} + \frac{\alpha}{2\beta} + \frac{1}{2} \right)$,
    \item $\epsilon_4 = \min \left( \epsilon_1 - \frac{1}{2}, \frac{\alpha}{2\beta} - \frac{1}{2} \right)$,
    \item $\epsilon_5 = \max\{\beta^2 \rho^2(L) + 3\alpha^2\rho^2(L) + 3L_f^2, 3\beta^2\}$.
    \item $A = \Bigg(\dfrac{\epsilon_3\eta h}{\,2\epsilon_2\epsilon_4-h\eta\epsilon_3\epsilon_5\,}\Bigg)\, \dfrac{nd}{4}\,(2\alpha^2+\beta^2)\,\rho^2(L)$
    \item $B = \Bigg(\dfrac{\epsilon_3\eta h}{\,2\epsilon_2\epsilon_4-h\eta\epsilon_3\epsilon_5\,}\Bigg)\, 2n$
\end{itemize}

Throughout this section, the expectation $\E\left[\cdot\right]$ is taken with respect to the randomness in both the stochastic gradients and the quantization distortion of the \texttt{q-PDGD} algorithm, and conditioned on the history $\calF_k$.

The following result holds for fixed stepsize $h$. 
\begin{theorem}
\label{thm:sq_pdgd_convergence}
Let Assumptions \ref{ass:diff}-\ref{ass:rsi}, \ref{ass:singleton}-\ref{ass:stoch_grad} hold. Consider Algorithm~\eqref{eq:update_law} with parameters $\alpha > \frac{2nL_f^2 + \nu L_f}{\nu\rho_2(L)}$, $\beta > 0$, and $0 < h < \min \left\{\frac{2 \epsilon_2 \epsilon_4}{\eta \epsilon_3 \epsilon_5}, \frac{2\epsilon_3}{\epsilon_2}\right\}$. Then there exist $\rho\coloneqq 1 - \frac{h(2\epsilon_2\epsilon_4 - h\eta\epsilon_3\epsilon_5)}{4\epsilon_3\epsilon_4} \in (0,1)$ and constant $C > 0$ such that
\begin{align}\label{eq:theorem1_bound}
    \E[\|\bx(k)-\Proj_{\calX}(\bx(k))\|]
    \leq C\rho^k + \sqrt{A\Delta^2 + B\sigma^2}.
\end{align}
\end{theorem}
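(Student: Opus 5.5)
The plan is a stochastic Lyapunov argument in the spirit of the deterministic exponential-stability proof of~\citep{yi2020exponential}, adapted to the noisy iteration~\eqref{eq:z_dyn}.

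\emph{Step 1: fix the reference point.} Since $\bv(0)=\mathbf{0}$ and the dual increment lies in $\mathrm{image}(\bL)$, every $\bv(k)$ stays in $\mathrm{image}(\bL)$. By Assumption~\ref{ass:singleton}, all points of $\calX$ share one stacked gradient $\nabla F(\bx^*)$. The KKT conditions then give a unique $\bv^*\in\mathrm{image}(\bL)$ with $\beta\bv^*=-\nabla F(\bx^*)$. Hence $G(\bz^*)=0$ for every $\bz^*=(\bx^*,\bv^*)$ with $\bx^*\in\calX$.

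\emph{Step 2: build the Lyapunov function.} I would take
\[
W(\bz)=\tfrac{\epsilon_1}{2}\|\bx-\Proj_{\calX}(\bx)\|^2+\tfrac12\|\bx+\bv/\beta-\bar{\bx}\|^2+\ldots
\]
with a cross term weighted by $\bL^{\dagger}$ on the dual error $\bv-\bv^*$. This is the quadratic form of~\citep{yi2020exponential}. The constants $\epsilon_1,\rho_2(L),\alpha/\beta$ appearing in $\epsilon_3,\epsilon_4$ suggest the sandwich
\[
\epsilon_4\|\bar\bz\|^2\le W\le\epsilon_3\|\bar\bz\|^2,
\]
where $\bar\bz$ collects $\bx-\Proj_{\calX}(\bx)$ and $\bv-\bv^*$ in the $\bL^\dagger$-type norm. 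I would also establish the gradient bound $\|\nabla W\|\le\eta\|\bar\bz\|$ and the field bound $\|G(\bz)\|^2\le\epsilon_5\|\bar\bz\|^2$. The latter should follow from Lipschitzness of $\nabla F$ and $\|\bL\|\le\rho(L)$, which is exactly where the constant $\epsilon_5$ comes from.

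\emph{Step 3: deterministic decrease.} Using Lemma~\ref{lem:rsi_augmented} together with Young's inequality, where the $L_f^2/(2\beta)+\beta\rho(L)$ terms in $\epsilon_1$ absorb the cross terms, I would show
\[
\nabla W(\bz)^\top G(\bz)\ge\epsilon_2\|\bar\bz\|^2 .
\]

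\emph{Step 4: one-step inequality.} The map $W$ is not globally smooth because of the projection onto the convex set $\calX$. However, $\tfrac12\mathrm{dist}^2(\cdot,\calX)$ has a $1$-Lipschitz gradient, so a descent-lemma argument gives
\[
W(\bz(k+1))\le W(\bz(k))-h\nabla W^\top(G+\xi)+\tfrac{h^2\eta}{2}\|G+\xi\|^2 .
\]
Taking $\E[\cdot\mid\calF_k]$, the $\xi$ cross term vanishes by~\eqref{eq:noise_bound} and Assumption~\ref{ass:stoch_grad}. I also need $\E\|\xi\|^2\le(2\alpha^2+\beta^2)\rho^2(L)\tfrac{nd\Delta^2}{4}+2n\sigma^2$, which gives exactly the constants appearing in $A$ and $B$. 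Combining with Steps 2–3 yields
\[
\E[W_{k+1}\mid\calF_k]\le W_k-\tfrac{h}{2}\bigl(2\epsilon_2-h\eta\epsilon_3\epsilon_5/\epsilon_4\bigr)\|\bar\bz\|^2+h^2\eta(\cdots)(\Delta^2,\sigma^2).
\]
Converting $\|\bar\bz\|^2\ge W/\epsilon_3$ produces the contraction $W_{k+1}\le\rho W_k+c$ with $\rho$ as stated. The condition $h<2\epsilon_3/\epsilon_2$ keeps $\rho>0$, and $h<2\epsilon_2\epsilon_4/(\eta\epsilon_3\epsilon_5)$ keeps $\rho<1$.

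\emph{Step 5: conclude.} Unrolling with the tower property gives
\[
\E W_k\le\rho^kW_0+\frac{c}{1-\rho},
\]
and $c/((1-\rho)\epsilon_4)$ should match $A\Delta^2+B\sigma^2$. Then
\[
\E\|\bx-\Proj_{\calX}(\bx)\|\le\sqrt{\E\|\cdot\|^2}\le\sqrt{\E W_k/\epsilon_4},
\]
and $\sqrt{a+b}\le\sqrt a+\sqrt b$ yields~\eqref{eq:theorem1_bound} with $C=\sqrt{W_0/\epsilon_4}$ and $\rho^{k/2}$. I would have to reconcile $\rho^{k/2}$ with $\rho^k$, presumably by redefining $C$ or adjusting constants.

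\emph{Expected obstacle.} The main difficulty is Step 3 combined with the nonsmoothness in Step 4. I have to choose the Lyapunov weights so that the positive-definiteness constants $\epsilon_3,\epsilon_4$, the decrease constant $\epsilon_2$, and the smoothness constant $\eta$ are simultaneously as stated, while handling $\Proj_{\calX}$ through the convexity of $\calX$ (Assumption~\ref{ass:diff}).
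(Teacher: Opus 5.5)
Your architecture matches the paper's proof step for step. The steps are: the sandwich by $\epsilon_3,\epsilon_4$, the drift computed along $\dot{\bz}=-G(\bz)$, $\|G\|^2\le\epsilon_5\|\bz-\bz^0\|^2$, the descent lemma with $\eta$, conditional expectation removing the linear noise terms, the same bound on $\E\|\xi(k)\|^2$, and unrolling. The gap is that Steps 2--3, which carry all the content, rest on a Lyapunov function you never fix.

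The one piece you do write down cannot work. The term $\tfrac12\|\bx+\bv/\beta-\bar{\bx}\|^2$ contains $\bv$ rather than $\bv-\bv^*$. At $(\bx^*,\bv^*)$ it therefore equals $\tfrac{1}{2\beta^2}\|\bv^*\|^2$, while the other pieces you describe vanish there. This is nonzero unless every $\nabla f_i(x^*)=0$, which is essentially the shared-minimizer situation the theorem is designed to avoid. So the upper sandwich, which forces $W(\bz^*)=0$, fails. The sandwich is also in the plain norm $\|\bx-\bx^0\|^2+\|\bv-\bv^0\|^2$; the $\epsilon_1/\rho_2(L)$ inside $\epsilon_3$ is the cost of converting an $\bL^\dagger$-norm into it.

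The paper takes $\bx^0=\Proj_{\calX}(\bx)$, $\bv^0=-\tfrac1\beta\nabla F(\bx^0)$, $P=K_n\otimes\bI_d$ and $\bL^\dagger=R\Lambda_1^{-1}R^\top\otimes\bI_d$, and sets
\[
V=\epsilon_1\|\bx-\bx^0\|^2+\epsilon_1\|\bv-\bv^0\|_{\bL^\dagger}^2+\tfrac{\alpha}{2\beta}\|\bv-\bv^0\|^2+(\bx-\bx^0)^\top P(\bv-\bv^0).
\]
Each term has a specific job:
\begin{itemize}
\item The $\bL^\dagger$-weighted dual quadratic (a pure dual term, not a cross term) cancels the coupling $-2\epsilon_1\beta(\bx-\bx^0)^\top(\bv-\bv^0)$, because $\bL^\dagger\bL=P$ and $P(\bv-\bv^0)=\bv-\bv^0$. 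What remains is $-2\epsilon_1[\alpha\bx^\top\bL\bx+(\bx-\bx^0)^\top(\nabla F(\bx)-\nabla F(\bx^0))]\le-2\epsilon_1\nu_1\|\bx-\bx^0\|^2$ by Lemma~\ref{lem:rsi_augmented}.
\item The $P$-weighted cross term supplies the dual dissipation $-\beta\|\bv-\bv^0\|^2$. Its price is $\beta\rho(L)\|\bx-\bx^0\|^2$, a Young term $\tfrac{L_f^2}{2\beta}\|\bx-\bx^0\|^2+\tfrac{\beta}{2}\|\bv-\bv^0\|^2$, and a term $-\alpha(\bv-\bv^0)^\top\bL\bx$.
\item The $\tfrac{\alpha}{2\beta}\|\bv-\bv^0\|^2$ term cancels that $-\alpha(\bv-\bv^0)^\top\bL\bx$ exactly.
\end{itemize}
With $\epsilon_1\nu_1\ge\beta\rho(L)+\tfrac{L_f^2}{2\beta}$ this gives $\dot V\le-\epsilon_1\nu_1\|\bx-\bx^0\|^2-\tfrac{\beta}{2}\|\bv-\bv^0\|^2$, which is your Step 3 with $\epsilon_2$. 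Positivity of $\epsilon_4$ additionally needs $\epsilon_1>\tfrac12$ and $\alpha>\beta$, which the paper imposes inside the proof.

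In Step 5, no choice of $C$ can absorb $\rho^{k/2}$ versus $\rho^k$; the fix is a factor of $2$. Your own Step 4 inequality, after $\|\bar{\bz}\|^2\ge W/\epsilon_3$, yields $\rho_0=1-\frac{h(2\epsilon_2\epsilon_4-h\eta\epsilon_3\epsilon_5)}{2\epsilon_3\epsilon_4}$, not the stated $\rho$. It is $\rho_0$ for which $c/((1-\rho_0)\epsilon_4)$ equals $A\Delta^2+B\sigma^2$ exactly; the stated $\rho$ would give twice that. Set $a=\frac{h(2\epsilon_2\epsilon_4-h\eta\epsilon_3\epsilon_5)}{4\epsilon_3\epsilon_4}$. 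Then $\sqrt{\rho_0}=\sqrt{1-2a}\le 1-a=\rho$, so $\rho_0^{k/2}\le\rho^k$, which is how the paper obtains its $\rho$.

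Your handling of the projection via the $1$-Lipschitz gradient of $\tfrac12\mathrm{dist}^2(\cdot,\calX)$ is sound, and cleaner than the paper's. The paper simply freezes $\bx^0=\Proj_{\calX}(\bx(k))$ in the descent lemma. That is legitimate: $\bv^0$ is constant by Assumption~\ref{ass:singleton} and $P\bx^0=0$, so only $\epsilon_1\|\bx-\bx^0\|^2$ depends on $\bx^0$, and re-projecting at step $k+1$ can only lower it. Finally, the descent lemma needs $\nabla V$ to be $\eta$-Lipschitz; the growth bound $\|\nabla W\|\le\eta\|\bar{\bz}\|$ you list in Step 2 is not the property being used.
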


Theorem~\ref{thm:sq_pdgd_convergence} implies that under RSI condition, for small enough but fixed step-size $h>0$, the agents' estimates in the \texttt{q-PDGD} algorithm converge {\em linearly} in expectation to a neighborhood of some optimal solution in $x^* \in X^*$ with rate at least $\rho$. This neighborhood of $x^*$ is proportional to the variance of the combined noise source. Note that exact consensus is not guaranteed above for a fixed $h$. However, both exact convergence to an optimal solution and consensus can be guaranteed for a diminishing step-size condition, which is formally presented in the next result.

\begin{remark}[\textbf{Network scaling of $A$ and $B$}]\label{rem:AB_network}
The constants $A, B$ characterize the asymptotic error neighborhood in~\eqref{eq:theorem1_bound} arising from quantization noise ($\Delta$) and stochastic gradient noise ($\sigma$), respectively. Appendix~\ref{app:AB_scaling} elaborates how these terms are governed by the network spectral quantities $(\rho(L),\rho_2(L))$ and the network scale $n$ while the problem parameters $(L_f,\nu)$ are $\bigo(1)$.
Particularly, in the small stepsize regime, 
$A=\bigo\!\left(h\,d\,\frac{n^4\rho^4(L)}{\rho_2^3(L)}\right)$ and
$B=\bigo\!\left(h\,\frac{n^2\rho^2(L)}{\rho_2(L)}\right)$,
revealing strong sensitivity to the {\em network condition number} 
$\kappa_L \coloneqq \rho(L)/\rho_2(L)$. Poor network condition (large $\kappa_L$) or large networks therefore amplify residue unless $h$ is reduced.
At the maximal stable stepsize 
$h=\bigo\!\Big(\frac{\epsilon_2\,\epsilon_4}{\eta\,\epsilon_3\,\epsilon_5}\Big)
=\bigo\!\Big(\frac{\rho_2^3(L)}{n^3\rho^4(L)}\Big)$, the quantization-induced error is
$A=\bigo(d\,n)$ and stochastic gradient-induced error $B=\bigo\!\left(\frac{\rho_2^2(L)}{n\,\rho^2(L)}\right)$.
This behavior can be interpreted as follows. Quantization error propagates through the consensus step itself. Pushing $h$ to 
its stability limit shrinks the stepsize in the exact inverse order of the dominant network amplification term, canceling spectral effects in $A$. However, since quantization noise is injected at every node, the dynamics accumulate $n$ noise sources in the mixing step, yielding the unavoidable growth of $A$ with $n$. 
On the other hand, gradient noise enters locally and are subsequently averaged by the coupled dynamics. In this case, the Laplacian acts as a variance-reduction mechanism, explaining why $B$ increases with stronger connectivity (smaller $\kappa_L$); stronger connectivity accelerates 
mixing, which improves contraction but also spreads stochastic gradient noise 
more efficiently across the network. At the same time, larger network promotes variance averaging, yielding a decay of $B$ with $n$.
\end{remark}

\begin{theorem}
\label{thm:diminishing_step}
Let Assumptions \ref{ass:diff}-\ref{ass:rsi}, \ref{ass:singleton}-\ref{ass:stoch_grad} hold.
Consider Algorithm~\eqref{eq:update_law} with parameters $\alpha > \frac{2nL_f^2 + \nu L_f}{\nu\rho_2(L)}$, $\beta > 0$, and stepsize $h_k = \frac{\gamma}{k+1}$ with $\gamma > \frac{2\epsilon_3}{\epsilon_2}$. Then there exists a constant $C > 0$ such that for sufficiently large $k$,
\begin{equation} \label{eq:err_dim}
    \E\left[\norm{\bx(k) - \Proj_{\calX}(\bx(k))}^2\right] \le \frac{C}{k}.
\end{equation}
\end{theorem}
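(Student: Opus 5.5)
We reuse the Lyapunov machinery behind Theorem~\ref{thm:sq_pdgd_convergence} and replace the fixed $h$ by $h_k=\gamma/(k+1)$. The result is a Chung-type recursion, which gives the $1/k$ rate.

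\textbf{Step 1: saddle point and error coordinates.} By Assumption~\ref{ass:singleton} the gradient $\nabla F(\bx^*)$ is the same for every $\bx^*\in\calX$. Moreover $(\mathbf{1}_n^\top\otimes \bI_d)\nabla F(\bx^*)=0$, so $\nabla F(\bx^*)\in\text{image}(\bL)$. Hence there is a unique $\bv^*\in\text{image}(\bL)$ with $\beta\bv^*=-\nabla F(\bx^*)$, and $\bz^*=(\bx^*,\bv^*)$ satisfies $G(\bz^*)=0$ for all $\bx^*\in\calX$. Since $\bv(0)=0$ and the dual increments lie in $\text{image}(\bL)$, $\bv(k)-\bv^*$ stays in $\text{image}(\bL)$.

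\textbf{Step 2: Lyapunov function.} We take the quadratic Lyapunov function $V(\bz)$ used for the fixed step-size theorem. It combines $\|\bx-\Proj_{\calX}(\bx)\|^2$, the dual error measured on $\text{image}(\bL)$ through $\bL^{\dagger}$-type weights, and a cross term. The constants $\epsilon_1,\epsilon_3,\epsilon_4,\eta$ are chosen so that
\begin{equation*}
\epsilon_4\|\bz-\bz^*\|^2\le V\le \epsilon_3\|\bz-\bz^*\|^2 ,
\end{equation*}
where $\bx^*=\Proj_{\calX}(\bx)$. We also need the one-step estimate that underlies Theorem~\ref{thm:sq_pdgd_convergence}:
\begin{equation*}
\E[V(\bz(k+1))\mid\calF_k]\le \Bigl(1-\tfrac{h_k\epsilon_2}{\epsilon_3}+c_1h_k^2\Bigr)V(\bz(k)) + h_k^2\,c_2\bigl(nd\Delta^2(2\alpha^2+\beta^2)\rho^2(L)/4+2n\sigma^2\bigr).
\end{equation*}
This estimate has three ingredients:
\begin{itemize}
\item the descent term uses Lemma~\ref{lem:rsi_augmented}, which gives strong monotonicity of $G$ in the restricted sense with constant $\epsilon_2$;
\item the cross terms with the noise $\xi(k)$ vanish in conditional expectation by Lemma~\ref{lem:quant}, \eqref{eq:noise_bound} and Assumption~\ref{ass:stoch_grad};
\item the quadratic terms are controlled by $\|G(\bz)\|^2\le\epsilon_5\|\bz-\bz^*\|^2$ and by $\E\|\xi\|^2\le (2\alpha^2+\beta^2)\rho^2(L)nd\Delta^2/4+2n\sigma^2$.
\end{itemize}
A subtlety is that $\Proj_{\calX}$ moves with the iterate. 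We handle it by the standard trick $\|\bx(k+1)-\Proj_{\calX}(\bx(k+1))\|\le\|\bx(k+1)-\Proj_{\calX}(\bx(k))\|$. For $\bv^*$ no trick is needed, since $\bv^*$ is fixed by Assumption~\ref{ass:singleton}.

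\textbf{Step 3: recursion and rate.} Set $a=\epsilon_2/\epsilon_3$ and $u_k=\E V(\bz(k))$. For $k\ge k_0$ the step is small enough that $c_1h_k^2\le \tfrac{\delta}{2} h_k$ for any fixed small $\delta>0$. This yields
\begin{equation*}
u_{k+1}\le \Bigl(1-\tfrac{\gamma(a-\delta)}{k+1}\Bigr)u_k+\tfrac{c\gamma^2}{(k+1)^2}.
\end{equation*}
The hypothesis $\gamma>2\epsilon_3/\epsilon_2$ gives $\gamma a>2>1$, so we can pick $\delta$ with $\gamma(a-\delta)>1$. Chung's lemma, proved by induction with the candidate bound $u_k\le C'/k$, then gives $u_k\le C'/k$ for large $k$. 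Finally, $\E\|\bx(k)-\Proj_{\calX}(\bx(k))\|^2\le u_k/\epsilon_4$, which is \eqref{eq:err_dim}.

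\textbf{Main obstacle.} The main difficulty is Step 2. We must verify that the one-step inequality holds with the descent constant exactly $\epsilon_2/\epsilon_3$, uniformly in the step, and with the $h_k^2$ terms isolated. For small $k$ the step $h_k$ may exceed the stability threshold of Theorem~\ref{thm:sq_pdgd_convergence}. These early iterates only contribute a finite transient, because $u_k$ stays bounded on $[0,k_0]$ by crude growth bounds. That is why the statement is asserted only for sufficiently large $k$.
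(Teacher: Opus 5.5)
Your proposal is correct and follows essentially the paper's route. Both proofs reuse the one-step bound \eqref{eq:vk_recur_rsi} from Theorem~\ref{thm:sq_pdgd_convergence} with $h_k=\gamma/(k+1)$, absorb the $O(h_k^2)$ term into the linear contraction for $k\ge K$, and solve the resulting Chung-type scalar recursion; the paper unrolls it explicitly with $\mu=\gamma\epsilon_2/(2\epsilon_3)>1$ instead of using induction. Your other differences are minor refinements rather than a new argument: the arbitrary-$\delta$ absorption shows that $\gamma>\epsilon_3/\epsilon_2$ would already suffice, and you make the moving-projection step explicit, which the paper leaves implicit.
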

Eq~\eqref{eq:err_dim} implies that $\lim_{k \to \infty} \E\left[\norm{\bx(k) - \Proj_{\calX}(\bx(k))}^2\right] = 0$, which means $\E\left[\norm{x_i(k) - \Proj_{X^*}(x_i(k))}\right] \to 0$ and $\E\left[\norm{x_i(k) - x_j(k)}\right] \to 0$ for all $i,j$. Thus, under RSI condition, with a diminishing stepsize, the agents' estimates in the \texttt{q-PDGD} algorithm achieve consensus to some optimal solution $x^* \in X^*$ in expectation at a rate of $\mathcal{O}(1/k)$, matching the best-known centralized stochastic rates in oracle complexity.

While RSI in Assumption~\ref{ass:rsi} relaxes the classical strong convexity condition, an even weaker relaxation is PL in Assumption~\ref{ass:PL}. We present below the convergence guarantee of Algorithm~\eqref{eq:update_law} with fixed stepsize under the weaker PL condition. To present this result, we need the following notation.
\begin{align*}
\epsilon
&\coloneqq \frac{h(\epsilon_1-\eta\epsilon_2)}{\epsilon_5}, \\
\displaybreak[2]  
\kappa_1
&\coloneqq \frac{1}{\rho_2(L)}\Bigl(4+\frac{3}{2}L_f^2\Bigr), \,
\kappa_2 > 1, \,
\kappa_3
\coloneqq \frac{\kappa_1}{\kappa_2-1}, \\
\kappa_4
&\coloneqq \frac{1}{4}\Bigl(3+\bigl(9+8\kappa_2+\frac{8}{\rho_2(L)}\bigr)^{1/2}\Bigr), \\
\kappa_5
&\coloneqq 2\Bigl(\!\kappa_2+\frac{1}{\rho_2(L)}\!\Bigr)L_f^2
+2\Bigl(\!\bigl(\kappa_2+\frac{1}{\rho_2(L)}\bigr)^2L_f^4+L_f^2\!\Bigr)^{\!1/2},\\
\epsilon_2
&\coloneqq \max\Bigl\{\!\beta^2\rho(L)
+(2\alpha^2\!+\!\beta^2)\rho^2(L)
+\frac{5}{2}L_f^2,\;
2\beta^2\!+\!\frac12\Bigr\}, \\
\epsilon_3
&\coloneqq \frac14-\frac{1}{2\beta}\Bigl(\frac1\beta+\frac{1}{\rho_2(L)}+\frac{\alpha}{\beta}\Bigr)L_f^2, \\
\epsilon_4
&\coloneqq \frac{1}{\beta^2}\Bigl(1+\frac{3}{2\rho_2(L)}+\frac{3\alpha}{2\beta}\Bigr)L_f^2
+\frac{L_f(1+L_f)}{2}, \\
\epsilon_5
&\coloneqq \frac{\alpha+\beta}{2\beta}+\frac{1}{2\rho_2(L)}, \\
\epsilon_1
&\coloneqq \min\Bigl\{1,\frac{\nu}{2n}, 2\sqrt{\epsilon_{2} \epsilon_{5}}\Bigr\}.
\end{align*}
We let $\bar{x}(k) = \frac{1}{n}(\boldsymbol{1}_n^\top \otimes \boldsymbol{I}_d)\boldsymbol{x}(k)$ denote the mean of agents' primal variables and $\bar{\boldsymbol{x}}(k) = \boldsymbol{1}_n \otimes \bar{x}(k)$ its stacked value.



\begin{theorem}
\label{thm:pl_diminishing_step}
Let Assumptions \ref{ass:diff}-\ref{ass:smooth}, \ref{ass:PL}, \ref{ass:connectivity}-\ref{ass:stoch_grad} hold. Consider Algorithm~\eqref{eq:update_law} with parameters $\beta + \kappa_1 \le \alpha \le \kappa_2 \beta$, $\beta \ge \max\{\kappa_3,\kappa_4,\kappa_5\}$, and $0 < h < \min\Bigl\{\frac{\epsilon_1}{\epsilon_2}, \frac{\epsilon_3}{\epsilon_4}\Bigr\}$. Then there exists a constant $C_{err} > 0$ such that for all agent $i \in [n]$,
\[
\limsup_{k \to \infty} \left(\E[\|x_{i,k}-\Proj_{X^*}(\bar{x}_{k})\|^{2}] + \E[f(\bar{x}_{k})-f^{*}]\right) \le C_{err}.
\]
\end{theorem}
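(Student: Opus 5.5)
The proof will adapt the Lyapunov analysis of deterministic PDGD under PL (in the style of \cite{yi2020linear}) to the stochastic quantized dynamics \eqref{eq:z_dyn}. The first step is to change dual coordinates so that the equilibrium is centered. The dual update keeps $\bv(k)$ in $\text{image}(\bL)$, since $\bv(0)=\mathbf{0}$ and every increment is $h\beta\bL(\cdot)$. We therefore decompose $\bv$ along $\text{image}(\bL)$ and introduce the shifted dual $\bv(k)+\frac{1}{\beta}\nabla F(\bar{\bx}(k))$, projected onto that image. This plays the role of $\beta\bv^*=-\nabla F(\bx^*)$ without requiring Assumption~\ref{ass:singleton}.

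The candidate Lyapunov function is
\[
V(k)=\tfrac12\|\bx(k)-\bar{\bx}(k)\|_{\bK}^2+\tfrac12\|\bv(k)+\tfrac1\beta\nabla F(\bar{\bx}(k))\|_{P}^2+\bx(k)^\top\bK\big(\bv(k)+\tfrac1\beta\nabla F(\bar{\bx}(k))\big)+n\big(f(\bar x(k))-f^*\big).
\]
Here $\bK=\bI-\frac1n\mathbf{1}\mathbf{1}^\top\otimes\bI_d$ is the projector onto the disagreement subspace, and $P$ is built from $\alpha/\beta$ and the pseudo-inverse of $L$; the weights $\frac{\alpha+\beta}{2\beta}$ and $\frac{1}{2\rho_2(L)}$ in $\epsilon_5$ reflect this choice. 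The parameter conditions $\beta+\kappa_1\le\alpha\le\kappa_2\beta$ and $\beta\ge\max\{\kappa_3,\kappa_4,\kappa_5\}$ are used in two places. First, they make the quadratic form in $V$ positive definite, with $\epsilon_5$ as the upper constant. Second, they make the cross terms produced by the drift negative.

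Next we expand $\E[V(k+1)\mid\calF_k]$ by substituting $\bz(k+1)=\bz(k)-h(G(\bz(k))+\xi(k))$ and using $L_f$-smoothness of $f$ and of $\nabla F$ for the $f(\bar x)$ and $\nabla F(\bar{\bx})$ terms. Because $\E[\xi(k)\mid\calF_k]=0$, the first-order noise terms vanish. The second-order terms are then bounded by $h^2\epsilon_2$ times the state plus $h^2\big(n\sigma^2+\frac{nd\Delta^2}{4}(2\alpha^2+\beta^2)\rho^2(L)\big)$, using \eqref{eq:noise_bound} and Assumption~\ref{ass:stoch_grad}. The averaged component $\bar x$ sees only $\bar\zeta$, because $\mathbf{1}^\top L=0$ cancels the quantization noise in the mean.

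The first-order deterministic drift should be bounded by $-h\big(\epsilon_3\|\bx-\bar{\bx}\|^2+\|\bar{\bv}\text{-term}\|^2+\tfrac{n}{4}\|\nabla f(\bar x)\|^2\big)$ plus $h^2\epsilon_4(\cdot)$-type remainders. To get this we use Young's inequality on the cross terms, and the bound $\|\nabla F(\bx)-\nabla F(\bar{\bx})\|\le L_f\|\bx-\bar{\bx}\|$ together with $\bx^\top\bL\bx\ge\rho_2(L)\|\bx-\bar{\bx}\|^2$. The main obstacle is this step: all cross terms between the disagreement, the shifted dual and the gradient must close with the stated constants. I would first carry out the computation with free Young parameters and then match them to the definitions of $\kappa_1$ through $\kappa_5$.

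After applying PL (Assumption~\ref{ass:PL}), $\frac12\|\nabla f(\bar x)\|^2\ge\nu(f(\bar x)-f^*)$, the negative terms dominate $\epsilon_1 V$, which explains the term $\nu/2n$ in $\epsilon_1$. When $h<\epsilon_1/\epsilon_2$ and $h<\epsilon_3/\epsilon_4$, this gives
\[
\E[V(k+1)]\le(1-\epsilon)\E[V(k)]+h^2 c\,(\sigma^2+d\Delta^2)
\]
with $\epsilon\in(0,1)$. Iterating this recursion yields $\limsup_k\E[V(k)]\le h^2c(\sigma^2+d\Delta^2)/\epsilon$.

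Finally, we relate $V$ to the target quantity. The PL inequality implies quadratic growth, $f(\bar x)-f^*\ge\frac{\nu}{2}\|\bar x-\Proj_{X^*}(\bar x)\|^2$ (from \cite{karimi2016linear}), and we use $\|x_i-\Proj_{X^*}(\bar x)\|^2\le2\|x_i-\bar x\|^2+2\|\bar x-\Proj_{X^*}(\bar x)\|^2$. Both terms on the right, and $f(\bar x)-f^*$ itself, are bounded by a constant multiple of $V$. This gives the stated bound with $C_{err}$ proportional to $h^2(\sigma^2+d\Delta^2)/\epsilon$.
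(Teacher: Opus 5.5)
Your route is the paper's own. You use the same four-part Lyapunov function: disagreement, the shifted dual $\boldsymbol{v}+\frac{1}{\beta}\nabla F(\bar{\boldsymbol{x}})$ in the $\boldsymbol{Q}+\frac{\alpha}{\beta}\boldsymbol{K}$ norm, the cross term, and $f(\bar x)-f^*$. You follow it with a one-step expansion, PL at $\bar x$, and unrolling.

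\textbf{The step that fails.} You claim that, because $\mathbb{E}[\xi(k)\mid\calF_k]=0$, all noise enters at order $h^2$. This would give $\mathbb{E}[V(k+1)]\le(1-\epsilon)\mathbb{E}[V(k)]+h^2c(\sigma^2+d\Delta^2)$ and $C_{err}\propto h$. But zero mean only removes terms that are \emph{linear} in the noise. Your $V$ also depends on the noise nonlinearly through $\boldsymbol{g}^0(k+1)=\nabla F(\bar{\boldsymbol{x}}(k+1))$, where $\bar{\boldsymbol{x}}(k+1)=\bar{\boldsymbol{x}}(k)-h\bar{\boldsymbol{g}}(k)-h\bar{\boldsymbol{\zeta}}(k)$.

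Expanding the shifted-dual and cross terms produces first-order terms such as $\frac{1}{\beta}\bigl(\boldsymbol{v}(k)+\frac{1}{\beta}\boldsymbol{g}^0(k)\bigr)^{\top}\bigl(\boldsymbol{Q}+\frac{\alpha}{\beta}\boldsymbol{K}\bigr)\,\mathbb{E}[\boldsymbol{g}^0(k+1)-\boldsymbol{g}^0(k)\mid\calF_k]$. Under Assumption~\ref{ass:smooth} alone there is no Lipschitz Hessian. Set $y=\bar{\boldsymbol{x}}(k)-h\bar{\boldsymbol{g}}(k)$. Then the bias $\mathbb{E}[\nabla F(y-h\bar{\boldsymbol{\zeta}}(k))\mid\calF_k]-\nabla F(y)$ is in general of order $h\sigma$, not $h^2$. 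For instance, $f'(y)=L_f|y|$ gives $\mathbb{E}[f'(-h\zeta)]-f'(0)=L_fh\,\mathbb{E}|\zeta|$.

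So the cross term has size $h\sigma\|\cdot\|$. It can be absorbed into the $-h\|\cdot\|^2$ decrease only by Young's inequality with weight $h$, which leaves a residual of order $h\sigma^2$.

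\textbf{The repair, and what it costs.} This is exactly how the paper handles the term. It bounds it by $\frac{h}{2\beta}\|\cdot\|^2+\frac{1}{2h\beta}\mathbb{E}\|\boldsymbol{g}^0(k+1)-\boldsymbol{g}^0(k)\|^2$ and uses $\mathbb{E}\|\boldsymbol{g}^0(k+1)-\boldsymbol{g}^0(k)\|^2\le L_f^2h^2(\|\bar{\boldsymbol{g}}(k)\|^2+2n\sigma^2)$. This yields $\mathbb{E}[V(k+1)]\le(1-hc_1)\mathbb{E}[V(k)]+h\tilde{C}_A$, with a noise term of order $O(h)$. The paper's remark after the proof sketches makes this point explicitly.

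With that repair your argument still closes: $\limsup_k\mathbb{E}[V(k)]\le\tilde{C}_A/c_1<\infty$, which is all the theorem asserts. However, the $\sigma^2$-contribution to $C_{err}$ no longer shrinks with $h$. Your claimed $C_{err}\propto h^2(\sigma^2+d\Delta^2)/\epsilon$ therefore does not follow. Only the quantization part keeps the extra factor of $h$, because $\bar x$ never sees $\varepsilon(k)$.

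\textbf{A worthwhile addition.} Your closing quadratic-growth step is not in the paper's argument. The paper's final display controls only $\|x_{i,k}-\bar x_k\|^2$. PL $\Rightarrow$ QG is exactly what is needed to reach the stated quantity $\|x_{i,k}-\Proj_{X^*}(\bar x_k)\|^2$.
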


Consequently, under PL inequality, iterates of the \texttt{q-PDGD} algorithm achieve mean-square convergence in expectation to a neighborhood of the optimum. In particular, both the consensus error and objective sub-optimality gap remain uniformly bounded asymptotically.




\subsection{Proof Sketches of Key Results}

\begin{proof}[Proof Sketch of Theorem~\ref{thm:sq_pdgd_convergence}]
The proof relies on constructing a composite Lyapunov function $V(\bz)$ for the augmented state $\bz = [\bx^\top, \bv^\top]^\top$, defined as $V = V_2 + V_3$. Here, $V_2$ captures the quadratic error weighted by the spectral properties of the consensus matrix, while $V_3$ is a cross-term introduced to establish dissipativity and strict contraction.

First, we establish that $V$ is bounded by squared error norm, satisfying $\epsilon_4 \|\bz - \bz^0\|^2 \le V(\bz) \le \epsilon_3 \|\bz - \bz^0\|^2$. By interpreting the \texttt{q-PDGD} algorithm as a noisy discretization of a stable continuous-time dynamical system, we utilize the Lipschitz continuity of $\nabla V$. We show that the deterministic drift satisfies $\langle \nabla V(\bz), -G(\bz) \rangle \le -\frac{\epsilon_2}{\epsilon_3}V(\bz)$, which ensures exponential convergence in the noise-free case.

To account for the stochastic updates, we analyze the conditional expectation of the one-step difference $V_{k+1} - V_k$. The zero-mean property of the stochastic gradients and the quantization noise eliminates the linear noise terms. The remaining quadratic noise terms are bounded by the gradient variance $\sigma^2$ and the quantization parameter $\Delta^2$. This yields a recursive inequality of the form 
{\small
\begin{equation*}
    \mathbb{E}[V_{k+1}] \leq \rho \, \mathbb{E}[V_k] + O(\Delta^2 + \sigma^2),
\end{equation*}
}
\hspace{-8pt} 
where the contraction rate $\rho \in (0,1)$ depends on algorithm parameters. Unrolling this recursion and applying the quadratic bounds on $V$ yields the final error bound. The detailed derivation is provided in Appendix~\ref{sub:prf_thm1}.
\end{proof}

\begin{proof}[Proof Sketch of Theorem~\ref{thm:diminishing_step}]
The proof utilizes the same $V$ and the conditional expectation analysis from Theorem~\ref{thm:sq_pdgd_convergence}. By substituting the diminishing step size $h_k = \frac{\gamma}{k+1}$ into the established one-step bound, we obtain
{\small
\begin{equation*}
    \mathbb{E}[V_{k+1}] \leq \left(1 - \frac{\gamma \epsilon_2}{\epsilon_3(k+1)} + O\left(\frac{1}{k^2}\right)\right) \mathbb{E}[V_k] + \frac{\bar{C} \gamma^2}{(k+1)^2},
\end{equation*}
}
\hspace{-5pt} 
where $\bar{C}$ represents the constant noise terms. For sufficiently large $k$, the higher-order term $O(1/k^2)$ inside the parenthesis becomes negligible compared to the linear term. Letting $\mu = \frac{\gamma \epsilon_2}{2\epsilon_3}$, the inequality simplifies to
{\small
\begin{equation*}
    \mathbb{E}[V_{k+1}] \leq \left(1 - \frac{\mu}{k+1}\right) \mathbb{E}[V_k] + \frac{\bar{C} \gamma^2}{(k+1)^2}.
\end{equation*}
}
\hspace{-6pt}
The condition $\gamma > \frac{2\epsilon_3}{\epsilon_2}$ ensures that $\mu > 1$. Solving this scalar recursion yields the decay rate $\mathbb{E}[V_k] = O(1/k)$. Combining this with the quadratic bounds on $V$ completes the proof. See Appendix~\ref{sub:prf_thm2} for complete details.
\end{proof}

\begin{proof}[Proof Sketch of Theorem~\ref{thm:pl_diminishing_step}]
We employ a Lyapunov function $V$ similar to that in Theorem~\ref{thm:sq_pdgd_convergence}, comprising the consensus error, the auxiliary variable tracking error, and a cross-term, but we replace the distance to the optimal solution with the objective function gap $f(\bar{x}(k)) - f^*$. 

Analysis of the one-step dynamics establishes the recursion
{\small
\begin{equation*}
    \mathbb{E}[V_{k+1}] \le (1 - h c_{1})\mathbb{E}[V_k] + h \tilde{C}_A,
\end{equation*}
}
\hspace{-8pt} 
where $c_{1} > 0$ depends on the problem parameters and $\tilde{C}_A$ encapsulates the variance from stochastic gradients and quantization. The step size $h$ is chosen sufficiently small such that $(1 - h c_1) \in [0,1)$, ensuring a linear contraction of the deterministic energy. Unrolling this recursion yields
{\small
\begin{equation*}
    \mathbb{E}[V_k] \le (1 - h c_{1})^{k}V_0 + \frac{\tilde{C}_A}{c_{1}}.
\end{equation*}
}
\hspace{-6pt} 
As $k \to \infty$, the term depending on the initial condition vanishes geometrically, and the expected Lyapunov value converges to a neighborhood proportional to the noise level $\tilde{C}_A$. Specifically, we obtain $\limsup_{k \to \infty} \mathbb{E}[V_k] \le \frac{\tilde{C}_A}{c_1}$, which implies the stated asymptotic bound on the consensus and objective errors. The detailed proof is in Appendix~\ref{sub:prf_thm3}.
\end{proof}

\begin{remark}
For PL, due to the coupling between the gradient tracking error and the step size, the effective noise contribution $h\tilde{C}_A$ scales as $O(h)$, unlike the $O(h^2)$ scaling observed in the RSI case. Thus, for the weaker PL condition, a diminishing stepsize does not guarantee exact convergence.
\end{remark}

%% file: sections/experiments.tex
\section{Experimental Results}
\label{sec:exp}

We corroborate the theoretical predictions from Section~\ref{sec:conv} through controlled simulations under simultaneous stochastic gradients and finite-bit random quantization. We study two nonconvex objectives. All the communication graphs in this section are considered to be a {\em ring}.

\textbf{Example 1.} We consider heterogeneous local objectives obtained by shifting a quadratic-plus-periodic function across agents, defined by
\begin{equation}\label{eq:ex1_local_obj}
\begin{aligned}
f_i(x) &= (x-s_i)^2 + 3\sin^2(x-s_i); \, s_i \coloneqq i-1.
\end{aligned}
\end{equation}
This construction produces smooth but oscillatory gradients and does not rely on shared minimizers.

\textbf{Example 2.} We consider a piecewise-defined local objective that alternates between polynomial and curved (square-root) segments, creating multiple curvature regimes that stress stability under noise and quantization, defined by
\begin{equation}\label{eq:ex2_local_obj}
f_i(x)\!=\!
\begin{cases}
b_{1,i}(x+1)^2, & \! x\le -1,\\
b_{2,i}x^4, & \! -1 < x \le 0,\\
1-\sqrt{1-x^2}+b_{3,i}x^2, & \! 0 < x < a,\\
\sqrt{1-(x-c)^2}\!-\!c+\!1+\!b_{3,i}x^2, & \! a \le x < 1,\\
\begin{aligned}[t]
&\frac12\,(x-1+\kappa)^2 \;+\; b_{3,i}x^2\\
&\quad +\; \sqrt{2c-2} \;+\; \frac{5-5c}{4},
\end{aligned}
& \! x\ge 1.
\end{cases}
\end{equation}
Here $c \coloneqq \sqrt{2}, \, a \coloneqq \frac{c}{2}, \, \kappa \coloneqq \sqrt{\frac{c-1}{2}}$, and $b_{i,j}$ are randomly generated constants satisfying $\sum_{i=1}^n b_{i,1} > 0$, $\sum_{i=1}^n b_{i,2} = \sum_{i=1}^n b_{i,3} = 0$.

\textbf{Constant vs.\ diminishing step-sizes.}
To validate the theoretical guarantees, we evaluate the proposed \qpdgd{} algorithm under both constant and diminishing step-size regimes. Figure \ref{fig:convergence_comparison} shows the consensus error and the objective gap for the non-convex problems presented in Examples 1\&2 with $n=10$ agents. With a constant step-size, the method exhibits an initial linear convergence phase and then plateaus at a noise-dominated neighborhood, consistent with Theorem~\ref{thm:sq_pdgd_convergence}. In contrast, applying a diminishing step-size schedule $h_k = \mathcal{O}(1/k)$ enables the agents to overcome this noise floor, achieving asymptotic consensus and driving the objective error down at a sublinear rate of $\mathcal{O}(1/k)$, corroborating Theorem \ref{thm:diminishing_step}. 

\textbf{Network scaling.}
To validate the network-dependence derived in Remark~\ref{rem:AB_network}, we report how the total network error scales with the number of agents $n$ while keeping other parameters fixed. The observed trend in Figure~\ref{fig:network_scaling} matches the predicted dependence of the steady-state error on network size and spectral properties.

\textbf{Baseline comparison on least squares.}
We benchmark the practical efficiency of \qpdgd{} against standard distributed baselines, specifically Quantized Distributed Gradient Descent \texttt{q-DGD}~\citep{Yuanetal2013}, \texttt{CHOCO-SGD}~\citep{koloskova2019decentralized} and \texttt{DCSG}~\citep{dutta20241}. 
To establish a fair baseline against \texttt{q-DGD} and \texttt{DCSG}, which typically require stronger assumptions like strong convexity, we evaluate the algorithms on a standard distributed least squares problem. This fair comparison isolates and highlights the accelerated convergence of \qpdgd{}. In this problem, each agent's local cost is defined by
$$f_i(x_i) = \frac{1}{2}\|A_i x_i - b_i\|^2.$$
We simulate $n=10$ agents estimating a signal $x^* \in \mathbb{R}^3$ ($d=3$). The data matrices $A_i \in \mathbb{R}^{3 \times 3}$ and $x^*$ are drawn i.i.d. from standard normal distribution $\mathcal{N}(0,1)$, with measurements $b_i = A_i x^*$ ensuring a true minimum $f^* = 0$. Stochastic local gradients are computed as $g_i(x) = A_i^T(A_i x_i - b_i) + \zeta_i$, where $\zeta_i \sim \mathcal{N}(0,0.5)$. Communication is compressed using an 8-bit random uniform quantizer over $[-10,10]$. For this least-squares problem, Figure \ref{fig:quantized_vs_unquantized} showcases the objective gap trajectories under simultaneous stochastic gradient noise and random quantization. \qpdgd{} achieves the target optimality threshold of $0.01$ at $k=215$, significantly outperforming \texttt{q-DGD} ($k=745$), \texttt{CHOCO-SGD} ($k=948$) and \texttt{DCSG} ($k=969$). As further highlighted, \qpdgd{} exhibits remarkable robustness to finite-bit communication; its quantized trajectory closely mirrors its unquantized (exact communication) counterpart. We further stress-test \qpdgd{} against representative quantized decentralized baselines on two image-classification benchmarks where our assumptions are formally violated. As shown in Appendix~\ref{app:dl-experiments}, the consensus advantage predicted by Theorem~\ref{thm:sq_pdgd_convergence} continues to hold empirically. This confirms that the primal-dual tracking mechanism effectively absorbs and mitigates the consensus errors induced by random quantization without severely degrading the convergence rate. Additional plots demonstrating the dependency of final error on $\Delta^2$ and $\sigma^2$ from Theorem~\ref{thm:sq_pdgd_convergence} and performance comparison of \qpdgd{} with the aforementioned baselines to reach a target threshold are included in Appendix~\ref{app:plots}.

\begin{figure*}[t!]
    \centering
    \includegraphics[width=0.9\linewidth]{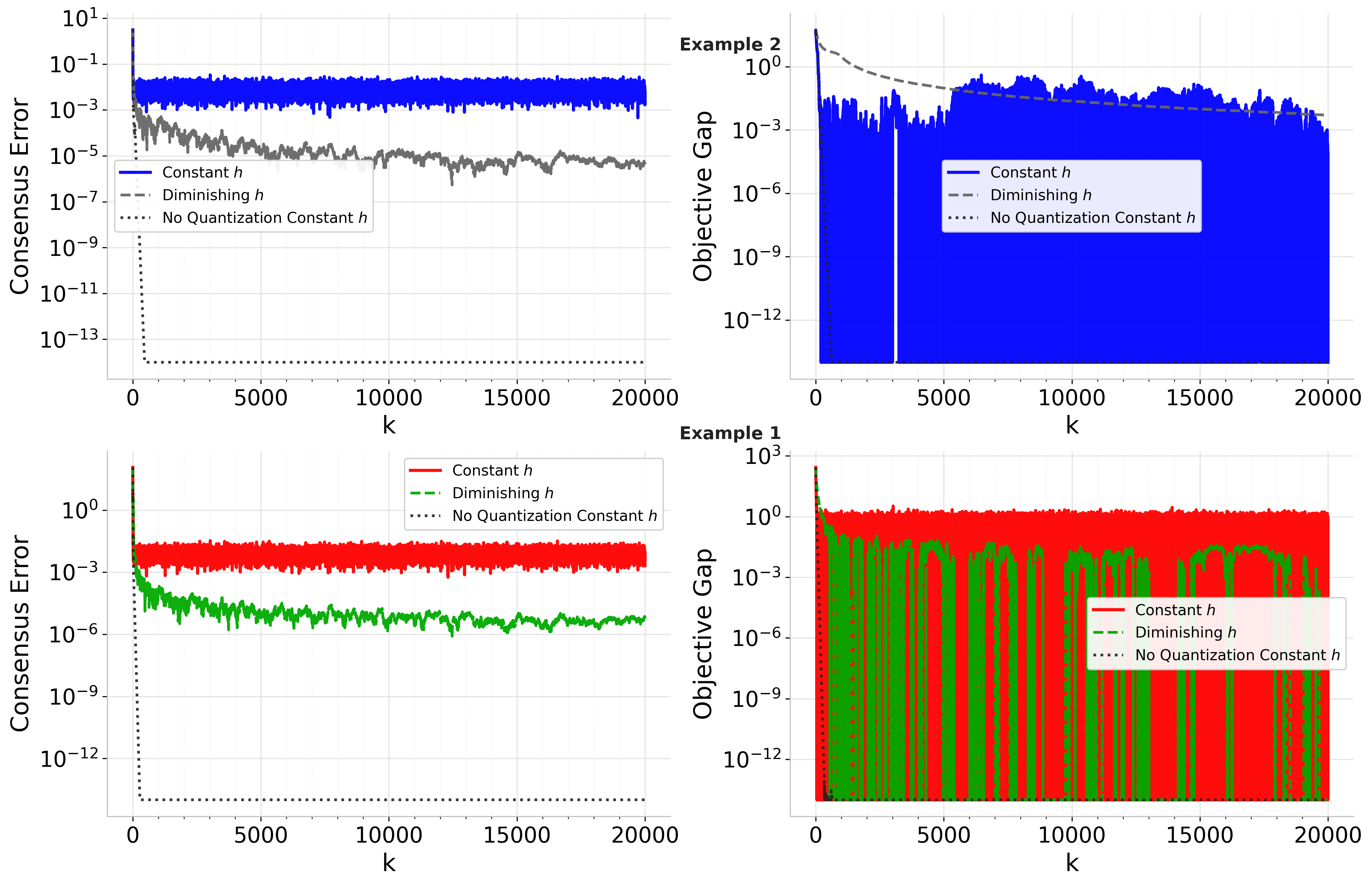}
    \captionof{figure}{\textbf{(Example 2, top) and (Example 1, bottom).} Evolution of the consensus error and objective gap for \qpdgd{} under constant and diminishing step-sizes over a ring graph with $n=10$ agents. Constant step-sizes converge linearly before plateauing at a neighborhood, while diminishing step-size achieves consensus and optimization.} 
    \label{fig:convergence_comparison}
    \begin{minipage}{0.48\textwidth}
        \centering
        \includegraphics[width=\linewidth]{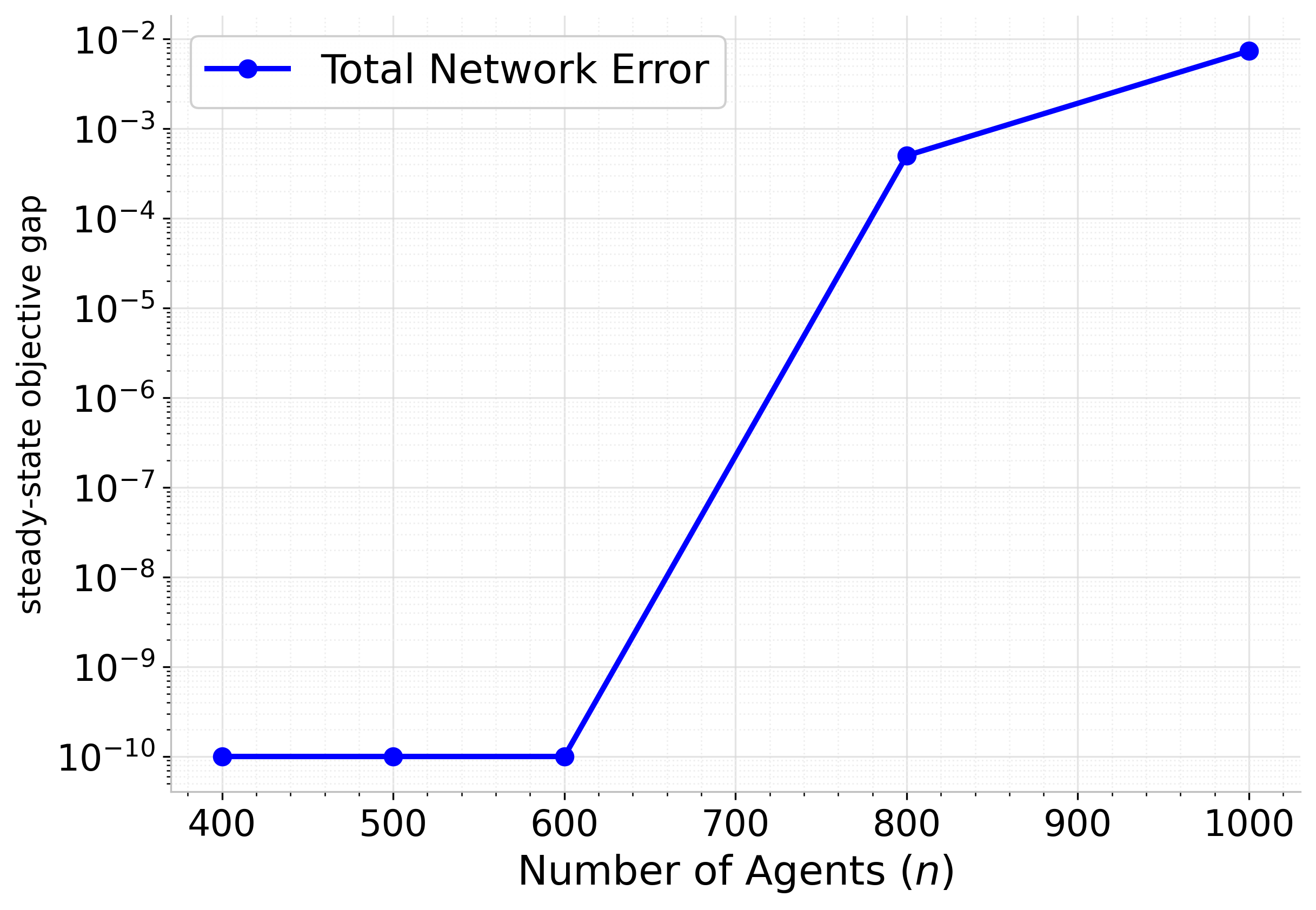}
        \captionof{figure}{Total network error scaling versus the number of agents $n$ for Example~2, validating the dependency predicted by Remark~\ref{rem:AB_network}.}
        \label{fig:network_scaling}
    \end{minipage}
    \hfill
    \begin{minipage}{0.48\textwidth}
        \centering
        \includegraphics[width=\linewidth]{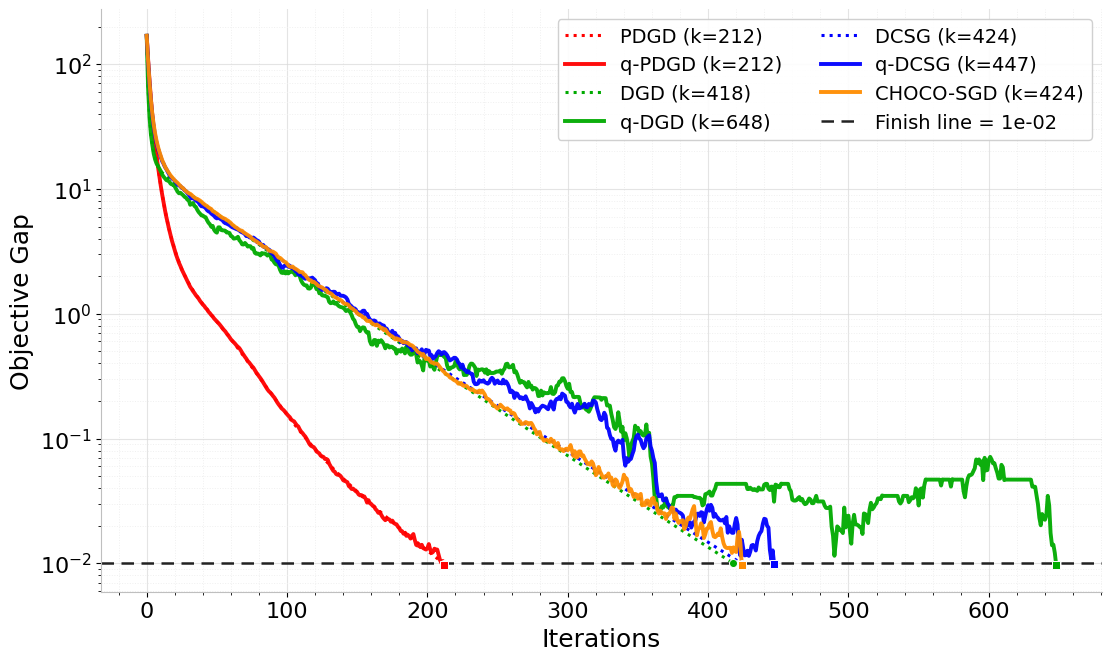}
        \captionof{figure}{Quantized (Q) versus unquantized (UQ) trajectories on distributed least squares under stochastic gradients and finite-bit random quantization. \qpdgd{} remains robust to compression. }
        \label{fig:quantized_vs_unquantized}
    \end{minipage}
    
\end{figure*}

%% file: sections/conclusion.tex
\section{Conclusion}
In summary, we developed \texttt{q-PDGD}, a stochastic primal–dual method that remains provably effective under simultaneous gradient noise and finite-bit, random (unbiased) quantized communication in fully decentralized networks. Under RSI, we established a clear linear-to-neighborhood guarantee for constant step-sizes with an explicit residual radius that cleanly decomposes the contributions of quantization distortion, stochastic variance, and graph connectivity, and we further showed that a diminishing step-size achieves the minimax-optimal $\bigO(1/k)$ rate without shared-minimizer assumptions. Under the even weaker PL geometry, we proved linear convergence to a noise-dominated neighborhood, again explicitly characterizing how network structure and compression affect the steady-state error. Experiments corroborate these predictions by exhibiting the expected plateau vs. $\bigO(1/k)$ behavior and by validating the linear dependence on ($\Delta^2$) and ($\sigma^2$) as well as the predicted scaling with network size and spectral properties. Collectively, these results position \texttt{q-PDGD} as a principled and practical tool for communication-constrained distributed learning, and they motivate future directions such as sharper constants, adaptive bit/step-size scheduling, and extensions to biased compressors and time-varying graphs.

%% file: sections/supplementary.tex
\onecolumn

\title{Quantized Stochastic Primal–Dual Methods for Distributed Optimization under Relaxed Global Geometry\\(Supplementary Material)}
\maketitle


\appendix

\input{sections/appendix}

%% file: sections/appendix.tex
\section{Detailed Proofs}
\label{sec:proofs}

\subsection{Matrix Notation and Spectral Properties}
\label{sub:mat_prop}

Recall $L$ denote the Laplacian matrix of the connected undirected graph $\mathcal{G}$. Let the {\em centering matrix} $K_n$ denote the orthogonal projection onto the subspace orthogonal to $\mathbf{1}_n$, defined as
\begin{equation}
K_n = \bI_n - \frac{1}{n}\mathbf{1}_n\mathbf{1}_n^\top.
\end{equation}
Both $L$ and $K_n$ are positive semi-definite. Since the graph is connected under Assumption~\ref{ass:connectivity}, the null spaces satisfy $\text{null}(L) = \text{null}(K_n) = \text{span}(\mathbf{1}_n)$. Furthermore, the matrices satisfy the commutativity and projection properties
\begin{equation}
K_n L = L K_n = L, \quad \text{and} \quad \rho(K_n) = 1.
\end{equation}

Let $0 = \lambda_1 < \lambda_2 \le \dots \le \lambda_n$ be the eigenvalues of $L$. We denote the spectral radius by $\rho(L) = \lambda_n$ and the algebraic connectivity (smallest non-zero eigenvalue) by $\rho_2(L) = \lambda_2$. The Laplacian satisfies the following bounds relative to $K_n$:
\begin{equation}
\rho_2(L)K_n \preceq L\preceq \rho(L)K_n.
\end{equation}

To analyze the weighted norm in the Lyapunov function, we introduce the spectral decomposition of $L$. There exists an orthogonal matrix $Q = [r, R] \in R^{n \times n}$, where $r = \frac{1}{\sqrt{n}}\mathbf{1}_n$ and $R \in R^{n \times (n-1)}$, such that~\citep{yi2020exponential}
\begin{align}
& L = [r \, R] 
\begin{bmatrix} 
0 & 0 \\ 
0 & \Lambda_1 
\end{bmatrix} 
\begin{bmatrix} 
r^\top \\ 
R^\top 
\end{bmatrix} 
= R \Lambda_1 R^\top, \label{eq:L_factor} \\
& R\Lambda_1^{-1}R^\top L = L R\Lambda_1^{-1}R^\top = K_n, \label{eq:Kn_def} \\
& \frac{1}{\rho(L)} K_n \preceq R\Lambda_1^{-1}R^\top \preceq \frac{1}{\rho_2(L)} K_n. \label{eq:Kn_bounds}
\end{align}
where $\Lambda_1 = \text{diag}([\lambda_2, \dots, \lambda_n]).$

\subsection{Proof of Lemma \ref{lem:rsi_augmented}}
\label{sub:prf_lem2}

Let $\bx^* = \mathcal{P}_{\mathcal{X}^*}(\bx)$. For any $\bx \in \mathbb{R}^{nd}$, we orthogonally decompose it as $\bx = \mathbf{a} + \mathbf{b}$, where $\mathbf{a} \in \mathbb{H} = \{\mathbf{1}_n \otimes y : y \in \mathbb{R}^d\}$ is the average component, and $\mathbf{b} \in \mathbb{H}^\perp$ is the disagreement component.
Specifically, $\mathbf{a} = \frac{1}{n}(\mathbf{1}_n\mathbf{1}_n^\top \otimes \bI_d)\bx$.

Since $\mathcal{X}^* \subseteq \mathbb{H}$, the projection onto the optimal set is determined solely by the average component, i.e., $\mathcal{P}_{\mathcal{X}^*}(\bx) = \mathcal{P}_{\mathcal{X}^*}(\mathbf{a}) = \bx^*$.
Consequently, the error norm decomposes as
\begin{equation}
\|\bx - \bx^*\|^2 = \|\mathbf{a} - \bx^*\|^2 + \|\mathbf{b}\|^2.
\end{equation}
\\
We expand the inner product term involving the gradients as
\begin{align}
&(\nablaF(\bx) - \nablaF(\bx^*))^\top (\bx - \bx^*) \notag \\
&= (\nablaF(\mathbf{a}) - \nablaF(\bx^*))^\top (\mathbf{a} - \bx^*) + (\nablaF(\bx) - \nablaF(\mathbf{a}))^\top (\mathbf{a} - \bx^*) + (\nablaF(\mathbf{a}) - \nablaF(\bx^*))^\top \mathbf{b} \notag \\
&+ (\nablaF(\bx) - \nablaF(\mathbf{a}))^\top \mathbf{b}. 
\end{align}
We bound these terms using the RSI Condition. Since $\mathbf{a}, \bx^* \in \mathbb{H}$, we can relate $F$ to the global cost $f$. From Assumption~\ref{ass:rsi},
   \begin{align} (\nablaF(\mathbf{a}) - \nablaF(\bx^*))^\top (\mathbf{a} - \bx^*) = 
   (\nabla f(a) - \nabla f(x^*))^\top (a - x^*) \ge \frac{\nu}{n}\|\mathbf{a} - \bx^*\|^2. \end{align}
From Lipschitz continuity in Assumption~\ref{ass:smooth},
\begin{align}
(\nablaF(\bx) - \nablaF(\mathbf{a}))^\top (\mathbf{a} - \bx^*) & \ge -L_f\|\bx - \mathbf{a}\|\|\mathbf{a} - \bx^*\| =-L_f\|\mathbf{b}\|\|\mathbf{a} - \bx^*\|, \\
(\nablaF(\bx) - \nablaF(\mathbf{a}))^\top \mathbf{b} & \ge -L_f\|\bx - \mathbf{a}\|\|\mathbf{b}\| = -L_f\|\mathbf{b}\|^2, \\
(\nablaF(\mathbf{a}) - \nablaF(\bx^*))^\top \mathbf{b} & \ge - L_f \|\mathbf{a} - \bx^*\| \|\mathbf{b}\|.
\end{align}
Combining the inequalities above,
\begin{align}
(\nablaF(\bx) - \nablaF(\bx^*))^\top (\bx - \bx^*) &\ge \frac{\nu}{n}\|\mathbf{a} - \bx^*\|^2 - 2 L_f\|\mathbf{b}\|\|\mathbf{a} - \bx^*\| 
- L_f\|\mathbf{b}\|^2. \label{eq:comb_lem2}
\end{align}
Using Young's inequality, we get
\[ -2L_f\|\mathbf{b}\|\|\mathbf{a} - \bx^*\| \ge -\frac{\nu}{2n}\|\mathbf{a} - \bx^*\|^2 - \frac{2nL_f^2}{\nu}\|\mathbf{b}\|^2. \]
Substituting above in~\eqref{eq:comb_lem2},
\begin{align}
(\nablaF(\bx) - \nablaF(\bx^*))^\top (\bx - \bx^*) &\ge \frac{\nu}{2n}\|\mathbf{a} - \bx^*\|^2 - \left( \frac{2nL_f^2}{\nu} + L_f \right)\|\mathbf{b}\|^2. \label{eq:grad_bound}
\end{align}
Now consider the Laplacian term. Since $\bx^\top \boldsymbol{L} \bx = \mathbf{b}^\top \boldsymbol{L} \mathbf{b}$ and $L \succeq \rho_2(L) K_n$ on the subspace $\mathbb{H}^\perp$,
\begin{equation}
\alpha \bx^\top \boldsymbol{L} \bx \ge \alpha \, \rho_2(L) \|\mathbf{b}\|^2. \label{eq:laplacian_bound}
\end{equation}
\\
Adding \eqref{eq:grad_bound} and \eqref{eq:laplacian_bound},
\begin{align}
& (\nablaF(\bx) - \nablaF(\bx^*))^\top (\bx - \bx^*) + \alpha \bx^\top \boldsymbol{L} \bx \notag \\
&\ge \frac{\nu}{2n}\|\mathbf{a} - \bx^*\|^2 + \left( \alpha \rho_2(L) - \frac{2nL_f^2}{\nu} - L_f \right)\|\mathbf{b}\|^2 \notag \\
&\ge \min \left\{ \frac{\nu}{2n}, \alpha \rho_2(L) - \frac{2nL_f^2 + \nu L_f}{\nu} \right\} (\|\mathbf{a} - \bx^*\|^2 + \|\mathbf{b}\|^2) \notag \\
&= \nu_1 \|\bx - \bx^*\|^2.
\end{align}
Given the condition on $\alpha$, the coefficient above is strictly positive, ensuring $\nu_1 > 0$.

\subsection{Proof of Theorem \ref{thm:sq_pdgd_convergence}}
\label{sub:prf_thm1}

Consider the following functions
\begin{align}
V_1(\bx, \bv) &= \frac{1}{2}\|\bx - \bx^0\|^2 + \frac{1}{2}\|\bv - \bv^0\|_{R\Lambda_1^{-1}R^\top \otimes \bI_d}^2, \label{eq:V1} \\
V_2(\bx, \bv) &= 2\epsilon_1 V_1(\bx, \bv) + \frac{\alpha}{2\beta}\|\bv - \bv^0\|^2, \label{eq:V2} \\
V_3(\bx, \bv) &= \bx^\top (K_n \otimes \bI_d)(\bv - \bv^0), \label{eq:V3}
\end{align}
where, $R$ and $\Lambda$ are defined in Section~\ref{sub:mat_prop}, $\bx^0 = \mathbf{1}_n \otimes x^0 = \Proj_{\calX}(\bx)$, and $\bv^0 = -\frac{1}{\beta}\nabla f(\bx^0)$.
Consider the Lyapunov function candidate
\begin{equation}
V(\bx, \bv) = V_2(\bx, \bv) + V_3(\bx, \bv). \label{eq:V}
\end{equation}

Since $\mathbf{1}_n^\top L = \mathbf{0}_n^\top$, the $v_i$-update in~\eqref{eq:update_law} implies that the sum of the auxiliary variables is invariant with respect to $k$, i.e., $\sum_{i=1}^n v_i(k+1) = \sum_{i=1}^n v_i(k)$. So, given the initialization $\sum_{i=1}^n v_i(0) = \mathbf{0}_d$, we have $\sum_{i=1}^n v_i(k) = \mathbf{0}_p$ for all $k \ge 0$. This implies that the vector $\bv$ lies entirely in the range space of the centering matrix $(K_n \otimes \bI_d)$. Furthermore, since $\bv^0 = -\frac{1}{\beta}\nabla f(\bx^0)$ and $\sum_{i=1}^n \nabla f_i(x^0) = \mathbf{0}_p$, we have $(K_n \otimes \bI_d)(\bv - \bv^0) = \bv - \bv^0$.

Recall from~\eqref{eq:Kn_bounds} that the matrix $R\Lambda_1^{-1}R^\top \otimes \bI_d$ satisfies the following bounds:
\begin{equation}
\frac{1}{\rho(L)} (K_n \otimes \bI_d) \preceq R\Lambda_1^{-1}R^\top \otimes \bI_d \preceq \frac{1}{\rho_2(L)} (K_n \otimes \bI_d).
\end{equation}
Using above, we bound the second term in $V_1$ as
\begin{align*}
\|\bv -\bv^0\|_{R\Lambda_1^{-1}R^\top \otimes \bI_d}^2
\le \frac{1}{\rho_2(L)} (\bv - \bv^0)^\top (K_n \otimes \bI_d) (\bv - \bv^0) = \frac{1}{\rho_2(L)}\|\bv - \bv^0\|^2.
\end{align*}

Next, we bound the cross-term $V_3$ using Young's inequality. Noting that $\bx^0 = \mathbf{1}_n \otimes x^0$, we have $(\bx^0)^\top (K_n \otimes \bI_d) = \mathbf{0}_{nd}$. Thus, $V_3$ can be rewritten as
\[
V_3(\bx, \bv) = (\bx - \bx^0)^\top (K_n \otimes \bI_d) (\bv - \bv^0).
\]
Using Young's inequality and $\|K_n \otimes \bI_d\| = 1$, we obtain
\begin{align*}
-\frac{1}{2}(\|\bx - \bx^0\|^2 + \|\bv - \bv^0\|^2) \le V_3(\bx, \bv)
\le \frac{1}{2}(\|\bx - \bx^0\|^2 + \|\bv - \bv^0\|^2).
\end{align*}

Substituting these bounds into the definition of $V(\bx, \bv)$ in~\eqref{eq:V},
\begin{align}
V &\le 2\epsilon_1 \left( \frac{1}{2}\|\bx - \bx^0\|^2 + \frac{1}{2\rho_2(L)}\|\bv - \bv^0\|^2 \right)  + \frac{\alpha}{2\beta}\|\bv - \bv^0\|^2 + \frac{1}{2}(\|\bx - \bx^0\|^2 + \|\bv - \bv^0\|^2) \notag\\
&\le \epsilon_3 (\|\bx - \bx^0\|^2 + \|\bv - \bv^0\|^2), \label{eq:Vbd}
\end{align}
where $\epsilon_3 = \max \left( \epsilon_1 + \frac{1}{2}, \frac{\epsilon_1}{\rho_2(L)} + \frac{\alpha}{2\beta} + \frac{1}{2} \right)$.
Similarly, for the lower bound of $V$, we can obtain
\begin{align}
V &\ge 2\epsilon_1 \left( \frac{1}{2}\|\bx - \bx^0\|^2 \right) + \frac{\alpha}{2\beta}\|\bv - \bv^0\|^2 - \frac{1}{2}(\|\bx - \bx^0\|^2 + \|\bv - \bv^0\|^2) \notag\\
&\ge \epsilon_4 (\|\bx - \bx^0\|^2 + \|\bv - \bv^0\|^2),
\end{align}
where $\epsilon_4 = \min \left( \epsilon_1 - \frac{1}{2}, \frac{\alpha}{2\beta} - \frac{1}{2} \right)$. By choosing,
$\epsilon_1 > \frac{1}{2}$ and $\frac{\alpha}{\beta} > 1$, we ensure $\epsilon_4 > 0$.
Thus, 
\begin{align}
\label{eq:Vbounds}
\epsilon_4 (\|\bv - \bv^0\|^2 + \|\bx - \bx^0\|^2) \le V(\bz)
\le \epsilon_3 (\|\bv - \bv^0\|^2 + \|\bx - \bx^0\|^2).
\end{align}

Recall the state vector $\bz(k) = [\bx(k)^\top, \bv(k)^\top]^\top$.
The Lyapunov function $V(\bz)$ is differentiable and its gradient is $\nabla V(\bz) = [\nabla V(\bz)_1^\top, \nabla V(\bz)_2^\top]^\top$, where
\begin{align*}
[\nabla V(\bz)]_1 &= 2\epsilon_1(\bx - \bx^0) + (K_n \otimes \bI_d)(\bv - \bv^0), \\
[\nabla V(\bz)]_2 &= (K_n \otimes \bI_d)\bx + \left(2\epsilon_1(R\Lambda_1^{-1}R^\top \otimes \bI_d) + \frac{\alpha}{\beta} \otimes \bI_{nd}\right)(\bv - \bv^0).
\end{align*}
Also, $\nabla V(\bz)$ is Lipschitz continuous with constant $\eta = \sqrt{2} \max\left\{\frac{2\epsilon_1}{\rho_2(L)} + \alpha + 1, 4\epsilon_1 + 1\right\} > 0$.
Under Assumption~\ref{ass:smooth} using the descent lemma for smooth functions, and substituting from~\eqref{eq:z_dyn},
\begin{align}
& V(\bz(k+1)) - V(\bz(k))
\le \langle\nabla V(\bz(k)), \bz(k+1)-\bz(k)\rangle + \frac{\eta}{2}\|\bz(k+1)-\bz(k)\|^2 \notag\\
&= -h\langle\nabla V(\bz(k)), G(\bz(k))\rangle - h\langle\nabla V(\bz(k)), \xi(k) + \frac{\eta h^2}{2} (\|G(\bz(k))\|^2 + \|\xi(k)\|^2 \notag\\
&+ 2\langle G(\bz(k)), \xi(k) \rangle). \label{eq:V_diff_1}
\end{align}\\
To derive a bound for the deterministic term $-\langle\nabla V, F\rangle$, we observe that $-\langle \nabla V(\bz), G(\bz) \rangle$ corresponds to the time-derivative $\dot{V}$ of the same Lyapunov function along the trajectories of only the deterministic component of the continuous-time counterpart of~\eqref{eq:z_dyn}, defined by
\begin{subequations}
\label{eq:continuous_dynamics}
\begin{align}
\dot{\bx}(t) &= -\alpha \boldsymbol{L}\bx(t) - \beta \bv(t) - \nablaF(\bx(t)), \\
\dot{\bv}(t) &= \beta \boldsymbol{L}\bx(t).
\end{align}
\end{subequations}
We have, $\mathbf{1}^\top \bv(t) = \mathbf{1}^\top \bv(0) = \mathbf{0}_{nd}$. Considering the projection matrix  $P = K_n \otimes \bI_d$, we have
\[
P(\bv - \bv^0) = \bv - \bv^0.
\]
Recalling $V_1(\bx, \bv) = \frac{1}{2}\|\bx - \bx^0\|^2 + \frac{1}{2}\|\bv - \bv^0\|_{R\Lambda_1^{-1}R^\top \otimes \bI_d}^2$, the derivative is
\begin{align}
\dot{V}_1 &= (\bx - \bx^0)^\top \dot{\bx} + (\bv - \bv^0)^\top (R\Lambda_1^{-1}R^\top \otimes \bI_d) \dot{\bv} \notag \\
&= (\bx - \bx^0)^\top (-\alpha \boldsymbol{L}\bx - \beta \bv - \nablaF(\bx)) + (\bv - \bv^0)^\top (R\Lambda_1^{-1}R^\top \otimes \bI_d) (\beta \boldsymbol{L}\bx).
\end{align}
Using $(R\Lambda_1^{-1}R^\top \otimes \bI_d)\boldsymbol{L} = K_n \otimes \bI_d = P$, $\boldsymbol{L}\bx^0 = \mathbf{0}$, $P(\bv - \bv^0) = \bv - \bv^0$, and Lemma \ref{lem:rsi_augmented},
\begin{align}
\dot{V}_1 &= -\alpha \bx^\top \boldsymbol{L}\bx - (\bx - \bx^0)^\top (\nablaF(\bx) - \nablaF(\bx^0)) \le -\nu_1 \|\bx - \bx^0\|^2. \label{eq:V1_dot_explicit}
\end{align}
Differentiating $V_2(\bz) = 2\epsilon_1 V_1(\bz) + \frac{\alpha}{2\beta}\|\bv - \bv^0\|^2$,
\begin{align}
\dot{V}_2 = 2\epsilon_1 \dot{V}_1 + \frac{\alpha}{\beta}(\bv - \bv^0)^\top \dot{\bv} &= 2\epsilon_1 \dot{V}_1 + \frac{\alpha}{\beta}(\bv - \bv^0)^\top (\beta \boldsymbol{L}\bx) \notag \\
& \le -2\epsilon_1 \nu_1 \|\bx - \bx^0\|^2 + \alpha (\bv - \bv^0)^\top \boldsymbol{L}\bx. \label{eq:V2_dot_explicit}
\end{align}
Differentiating $V_3(\bz) = (\bx - \bx^0)^\top (K_n \otimes \bI_d) (\bv - \bv^0)$,
\begin{align}
\dot{V}_3
&= (\bx - \bx^0)^\top (K_n \otimes \bI_d) \dot{\bv} + \dot{\bx}^\top (K_n \otimes \bI_d) (\bv - \bv^0) \notag \\
&\le \beta \rho(L)\|\bx - \bx^0\|^2 - \alpha (\bv - \bv^0)^\top \boldsymbol{L}\bx - \beta \|\bv - \bv^0\|^2 + \frac{L_f^2}{2\beta}\|\bx - \bx^0\|^2 + \frac{\beta}{2}\|\bv - \bv^0\|^2 \notag \\
&= -\alpha (\bv - \bv^0)^\top \boldsymbol{L}\bx - \frac{\beta}{2}\|\bv - \bv^0\|^2 + \left(\beta \rho(L) + \frac{L_f^2}{2\beta}\right)\|\bx - \bx^0\|^2.
\label{eq:V3_dot_explicit}
\end{align}
Adding \eqref{eq:V2_dot_explicit} and \eqref{eq:V3_dot_explicit},
\begin{align}
\dot{V} &= \dot{V}_2 + \dot{V}_3 \le \left( -2\epsilon_1 \nu_1 + \beta \rho(L) + \frac{L_f^2}{2\beta} \right) \|\bx - \bx^0\|^2 - \frac{\beta}{2}\|\bv - \bv^0\|^2.
\end{align}
By choosing $\epsilon_1 \ge \frac{1}{\nu_1}\left(\beta \rho(L) + \frac{L_f^2}{2\beta}\right)$, we have
\begin{equation}
\dot{V} \le -\epsilon_1 \nu_1 \|\bx - \bx^0\|^2 - \frac{\beta}{2}\|\bv - \bv^0\|^2.
\end{equation}
Define $\epsilon_2 = \min\left\{
\epsilon_1 \nu_1, \frac{\beta}{2}
\right\}.$ From the upper bound in~\eqref{eq:Vbounds} and above, we conclude $\dot{V} \le -\frac{\epsilon_2}{\epsilon_3}V$.
We have obtained that the first term in the RHS of~\eqref{eq:V_diff_1} is bounded by
\begin{equation}
-F^\top(\bz(k)) \nabla V(\bz(k)) = \dot{V}(\bz(k)) \le -\frac{\epsilon_2}{\epsilon_3} V(\bz(k)). \label{eq:drift_bound_thm}
\end{equation}

Next, we upper bound the other deterministic term $\|F\|^2$ in~\eqref{eq:V_diff_1}.
Using $\boldsymbol{L}\bx^0 = \mathbf{0}_{nd}$ and $\bv^0 = -\frac{1}{\beta}\nablaF(\bx^0)$, we rewrite
\[
G(\bz) = \begin{bmatrix} \alpha \boldsymbol{L}(\bx - \bx^0) + \beta(\bv - \bv^0) + \nablaF(\bx) - \nablaF(\bx^0) \\ -\beta \boldsymbol{L}(\bx - \bx^0) \end{bmatrix}.
\]
Using the triangle inequality, Lipschitz continuity of $\nablaF$ from Assumption~\ref{ass:smooth}, and Section~\ref{sub:mat_prop},
\begin{align*}
\|G(\bz)\|^2 &\le 3\|\alpha \boldsymbol{L}(\bx - \bx^0)\|^2 + 3\|\beta(\bv - \bv^0)\|^2 + 3\|\nablaF(\bx) - \nablaF(\bx^0)\|^2 + \|\beta \boldsymbol{L}(\bx - \bx^0)\|^2 \\
&\le (\beta^2 \rho^2(L) + 3\alpha^2 \rho^2(L) + 3L_f^2) \|\bx - \bx^0\|^2 + 3\beta^2 \|\bv - \bv^0\|^2 \\
&\le \epsilon_5 (\|\bx - \bx^0\|^2 + \|\bv - \bv^0\|^2),
\end{align*}
where $\epsilon_5 = \max\{ 3\alpha^2 \rho^2(L) + \beta^2 \rho^2(L) + 3L_f^2, 3\beta^2 \}$.
Using the lower bound from~\eqref{eq:Vbounds}, we obtain
\begin{equation}
\|G(\bz(k))\|^2 \le \frac{\epsilon_5}{\epsilon_4} V(\bz(k)). \label{eq:grad_norm_thm}
\end{equation}
Upon substituting from above in~\eqref{eq:V_diff_1},
\begin{align*}
&V(\bz(k+1)) \\
&\leq \!V(\bz(k))
\!-\!\frac{h \epsilon_2}{\epsilon_3}\!V(\bz(k))
\!+\!\frac{\eta h^2 \epsilon_5}{2 \epsilon_1}\!V(\bz(k))
\!-\!h\!\langle\nabla V(\bz(k)), \!\xi(k)\rangle
\!+\!\frac{\eta h^2}{2}\!(\|\xi(k)\|^2 \!+\! 2\!\langle G(\bz(k)),\!\xi(k) \rangle)\\
&= (1-\frac{h(2\epsilon_2\epsilon_4 - h\eta\epsilon_3\epsilon_5)}{2\epsilon_3\epsilon_4}) V(\bz(k)) -h\langle\nabla V(\bz(k)), \xi(k)\rangle
+\frac{\eta h^2}{2}(\|\xi(k)\|^2 + 2\langle G(\bz(k)),\xi(k) \rangle).
\end{align*}

Taking conditional expectation with respect to $\calF_k$, and using the fact that the random quantization model and stochastic gradients are unbiased (Assumption~\ref{ass:stoch_grad} and~\eqref{eq:noise_bound}), we have
\begin{align*}
\E[V(\bz(k+1))\mid\calF_k] 
&\leq (1-\frac{h(2\epsilon_2\epsilon_4 - h\eta\epsilon_3\epsilon_5)}{2\epsilon_3\epsilon_4}) V(z(k))-h\E[\langle\nabla V(\bz(k)), \xi(k)\rangle\mid\calF_k] \\
&+ \eta h^2\E[\langle G(\bz(k)),\xi(k) \rangle\mid\calF_k]
+\frac{\eta h^2}{2}\E[\|\xi(k)\|^2\mid\calF_k]\\
\implies \E[V(\bz(k+1))] 
&\leq (1-\frac{h(2\epsilon_2\epsilon_4 - h\eta\epsilon_3\epsilon_5)}{2\epsilon_3\epsilon_4}) \E[V(\bz(k))] +\frac{\eta h^2}{2}\E[\|\xi(k)\|^2],
\end{align*}
as $\E[\xi(k) \mid \calF_k] = \mathbf{0}_{2nd}$.
Moreover, from Assumption~\ref{ass:stoch_grad} and~\eqref{eq:noise_bound}, it follows that
\[
\mathbb{E}[\|\xi(k)\|^2]
= \E[\|\alpha \mathcal{L}\varepsilon(k)+\zeta(k)\|^2] + \E[\|\beta \mathcal{L}\varepsilon(k)\|^2]\\
\le (2\alpha^2 + \beta^2)\rho^2(L)\E[\|\varepsilon(k)\|^2]
+ 2\E[\|\zeta(k)\|^2],
\]
so that
\[
\mathbb{E}[\|\xi(k)\|^2] 
\le (2\alpha^2 + \beta^2)\rho^2(L)\frac{nd\Delta^2}{4} + 2n\sigma^2 
\]
Combining these bounds yields
\begin{align}
    \E[V(\bz(k+1))] & \leq (1-\frac{h(2\epsilon_2\epsilon_4 - h\eta\epsilon_3\epsilon_5)}{2\epsilon_3\epsilon_4}) \E[V(\bz(k))] \!+\!\frac{\eta h^2}{2}\! \left((2\alpha^2 + \beta^2)\rho^2(L)\frac{nd\Delta^2}{4} \!+\! 2n\sigma^2 \right) \notag \\
    & = (1-\frac{h(2\epsilon_2\epsilon_4 - h\eta\epsilon_3\epsilon_5)}{2\epsilon_3\epsilon_4}) \E[V(\bz(k))] +\frac{\eta h^2}{2} (C_1\Delta^2 + C_2\sigma^2). \label{eq:vk_recur_rsi}
\end{align}
Since $h$ satisfies $0 < h < \min \left\{\frac{2 \epsilon_2 \epsilon_4}{\eta \epsilon_3 \epsilon_5},\, \frac{2\epsilon_3}{\epsilon_2}\right\}$, we have $0 < (1-\frac{h(2\epsilon_2\epsilon_4 - h\eta\epsilon_3\epsilon_5)}{2\epsilon_3\epsilon_4}) < 1$. Then, unrolling the above recursion from $k$ to $0$,
\begin{align*}
    \E[V(\bz(k))] & \leq (1-\frac{h(2\epsilon_2\epsilon_4 - h\eta\epsilon_3\epsilon_5)}{2\epsilon_3\epsilon_4})^k\E[V(\bz(0))] + \frac{\epsilon_3\epsilon_4\eta h}{(2\epsilon_2\epsilon_4 - h\eta\epsilon_3\epsilon_5)}(C_1\Delta^2 + C_2\sigma^2) \\
    &= (1-\frac{h(2\epsilon_2\epsilon_4 - h\eta\epsilon_3\epsilon_5)}{2\epsilon_3\epsilon_4})^k\E[V(\bz(0))] + K_1\Delta^2 + K_2\sigma^2.
\end{align*}

Then, from~\eqref{eq:Vbounds} the definition of $\bx^0$,
\[
\E[\|\bx(k)-\Proj_{\calX}(\bx(k))\|^2] \leq \frac{\E[V(\bz(k))]}{\epsilon_4}
\leq (1-\frac{h(2\epsilon_2\epsilon_4 - h\eta\epsilon_3\epsilon_5)}{2\epsilon_3\epsilon_4})^k\frac{\E[V(\bz(0))]}{\epsilon_4} + A\Delta^2 + B\sigma^2.
\]
Thus, 
\[
\E[\|\bx(k)-\Proj_{\calX}(\bx(k))\|]
\leq (1-\frac{h(2\epsilon_2\epsilon_4 - h\eta\epsilon_3\epsilon_5)}{4\epsilon_3\epsilon_4})^k\sqrt\frac{\E[V(\bz(0))]}{\epsilon_4} + \sqrt{A\Delta^2 + B\sigma^2}.
\]
The proof is complete.


\subsection{Proof of Theorem \ref{thm:diminishing_step}}
\label{sub:prf_thm2}

Consider the same Lyapunov function
\[
V(\bz)=V_2(\bx,\bv)+V_3(\bx,\bv),
\]
defined in the proof of Theorem~\ref{thm:sq_pdgd_convergence} in~\eqref{eq:V1}-\eqref{eq:V}.
Under the Assumptions~\ref{ass:diff}-\ref{ass:rsi}, \ref{ass:singleton}-\ref{ass:stoch_grad}, and the conditions on parameters $\alpha, \beta$ from Theorem~\ref{thm:sq_pdgd_convergence}, we follow its proof, we obtain the recursive step~\eqref{eq:vk_recur_rsi} as
\[
\E[V(\bz(k+1))] \leq (1-\frac{h_k(2\epsilon_2\epsilon_4 - h_k\eta\epsilon_3\epsilon_5)}{2\epsilon_3\epsilon_4}) \E[V(\bz(k))] +\frac{\eta h_k^2}{2} (C_1\Delta^2 + C_2\sigma^2).
\]
\textit{Note}: The stepsize condition $0 < h < \min \left\{\frac{2 \epsilon_2 \epsilon_4}{\eta \epsilon_3 \epsilon_5}, \frac{2\epsilon_3}{\epsilon_2}\right\}$ was not necessary for obtaining~\eqref{eq:vk_recur_rsi}.
Unlike the Proof of Theorem~\ref{thm:sq_pdgd_convergence}, we will not show contraction from $k=0$; since it is enough to show contraction for large enough $k$ to prove Theorem~\ref{thm:diminishing_step}. So, we do not need the $0 < h < \min \left\{\frac{2 \epsilon_2 \epsilon_4}{\eta \epsilon_3 \epsilon_5}, \frac{2\epsilon_3}{\epsilon_2}\right\}$ condition. Instead, we will leverage a diminishing stepsize for this purpose. 

Let $\bar{C} := \frac{\eta}{2} (C_1\Delta^2 + C_2\sigma^2)$.
Substituting \( h_k = \frac{\gamma}{k+1} \),
\[
\E[V(\bz(k+1))]
\leq
\left(
1-
\frac{\frac{\gamma}{k+1}
\left(
2\epsilon_2\epsilon_4
-
\frac{\gamma}{k+1}\eta\epsilon_3\epsilon_5
\right)}
{2\epsilon_3\epsilon_4}
\right)
\E[V(\bz(k))]
+
\frac{\gamma^2 \bar{C}}{(k+1)^2}.
\]
Expanding the coefficient of $\E[V(\bz(k))]$,
\[
1-
\frac{\gamma}{k+1}
\cdot
\frac{2\epsilon_2\epsilon_4}{2\epsilon_3\epsilon_4}
+
\frac{\gamma}{k+1}
\cdot
\frac{\frac{\gamma}{k+1}\eta\epsilon_3\epsilon_5}
{2\epsilon_3\epsilon_4}
=
1-
\frac{\gamma\epsilon_2}{\epsilon_3(k+1)}
+
\frac{\gamma^2 \eta \epsilon_5}{2\epsilon_4 (k+1)^2}.
\]
Thus,
\[
\E[V(\bz(k+1))]
\leq
\left(
1-
\frac{\gamma\epsilon_2}{\epsilon_3(k+1)}
+
\frac{\gamma^2 \eta \epsilon_5}{2\epsilon_4 (k+1)^2}
\right)
\E[V(\bz(k))]
+
\frac{\gamma^2 \bar{C}}{(k+1)^2}
\]
For sufficiently large \(k\), since \( \frac{1}{(k+1)^2} = o\!\left(\frac{1}{k+1}\right) \), there exists \(K\) such that for all \(k \geq K\),
\[
\frac{\gamma^2 \eta \epsilon_5}{2\epsilon_4 (k+1)^2}
\leq
\frac{1}{2}
\cdot
\frac{\gamma\epsilon_2}{\epsilon_3(k+1)}.
\]
Hence,
\[
1-
\frac{\gamma\epsilon_2}{\epsilon_3(k+1)}
+
\frac{\gamma^2 \eta \epsilon_5}{2\epsilon_4 (k+1)^2}
\leq
1-
\frac{1}{2}
\cdot
\frac{\gamma\epsilon_2}{\epsilon_3(k+1)}.
\]
Defining $\mu = \frac{\gamma\epsilon_2}{2\epsilon_3}$, we obtain for all sufficiently large $k \geq K$,
\[
\E[V(\bz(k+1))]
\leq
\left(1-\frac{\mu}{k+1}\right)
\E[V(\bz(k))]
+
\frac{\gamma^2 \bar{C}}{(k+1)^2}.
\]
We require $\gamma$ such that $\mu > 1$. Next, we unroll the above from $k$ to $K$:
\[
\E[V(\bz(k))] 
\leq 
\left( \prod_{i=K+1}^{k} \left(1 - \frac{\mu}{i}\right) \right) 
\E[V(\bz(K))]
+
\sum_{j=K+1}^{k}
\left(
\prod_{m=j+1}^{k}
\left(1 - \frac{\mu}{m}\right)
\right)
\frac{\gamma^2 \bar{C}}{j^2}.
\]
We bound the coefficients as follows.
Using $1 - x \leq e^{-x}$,
\[
\prod_{i=K+1}^{k} \left(1 - \frac{\mu}{i}\right)
\leq
\exp\left(-\mu \sum_{i=K+1}^{k} \frac{1}{i}\right).
\]
Since
\[
\sum_{i=K+1}^{k} \frac{1}{i}
\geq
\ln(k+1) - \ln(K+1),
\]
we obtain
\[
\prod_{i=K+1}^{k} \left(1 - \frac{\mu}{i}\right)
\leq
\left(\frac{K+1}{k+1}\right)^{\mu}.
\]
Similarly,
\[
\prod_{m=j+1}^{k} \left(1 - \frac{\mu}{m}\right)
\leq
\left(\frac{j+1}{k+1}\right)^{\mu}.
\]
Substituting the above inequalities for the coefficients,
\begin{align*}
    \E[V(\bz(k))]
    & \leq
    \left(\frac{K+1}{k+1}\right)^{\mu}
    \E[V(\bz(K))]
    +
    \sum_{j=K+1}^{k}
    \left(\frac{j+1}{k+1}\right)^{\mu}
    \frac{\gamma^2 \bar{C}}{j^2} \\
    & = \frac{(K+1)^{\mu}\E[V(\bz(K))]}{(k+1)^{\mu}}
+
\frac{\gamma^2 \bar{C}}{(k+1)^{\mu}}
\sum_{j=K+1}^{k}
\frac{(j+1)^{\mu}}{j^2}.
\end{align*}
Since $(j+1)^{\mu} \leq 2^{\mu} j^{\mu}$ for $j \geq 1$, we have the summation term
\[
\sum_{j=K+1}^{k} \frac{(j+1)^{\mu}}{j^2}
\leq
2^{\mu} \sum_{j=K+1}^{k} j^{\mu-2}.
\]
For $\mu > 1$,
\[
\sum_{j=K+1}^{k} j^{\mu-2}
\leq
\int_{K+1}^{k+1} x^{\mu-2} dx
=
\frac{(k+1)^{\mu-1} - (K+1)^{\mu-1}}{\mu-1}.
\]
Substituting the summation term with this bound,
\begin{align*}
    \E[V(\bz(k))]
    & \leq
    \frac{(K+1)^{\mu}\E[V(\bz(K))]}{(k+1)^{\mu}}
    +
    \frac{\gamma^2 \bar{C} \, 2^{\mu}}{(k+1)^{\mu}}
    \frac{(k+1)^{\mu-1} - (K+1)^{\mu-1}}{\mu-1} \\
    & \leq
    \frac{(K+1)^{\mu}\E[V(\bz(K))]}{(k+1)^{\mu}}
    +
    \frac{\gamma^2 \bar{C} \, 2^{\mu}}{(k+1)^{\mu}}
    \frac{(k+1)^{\mu-1}}{\mu-1} \\
    & =
    \frac{(K+1)^{\mu}\E[V(\bz(K))]}{(k+1)^{\mu}}
    +
    \frac{\gamma^2 \bar{C} \, 2^{\mu}}{(k+1) (\mu-1)}.
\end{align*}
Then, from~\eqref{eq:Vbounds} the definition of $\bx^0$,
\[
\E[\|\bx(k)-\Proj_{\calX}(\bx(k))\|^2] \leq \frac{\E[V(\bz(k))]}{\epsilon_4}
\leq 
\frac{(K+1)^{\mu}\E[V(\bz(K))]}{(k+1)^{\mu} \, \epsilon_4}
+
\frac{\gamma^2 \bar{C} \, 2^{\mu}}{(k+1) (\mu-1) \epsilon_4}.
\]
The RHS is $\bigo\left(\frac{1}{(k+1)^{\mu}}\right) + \bigo\left(\frac{1}{k+1}\right)$. Given $\mu > 1$, the dominant term is of order $O\left(\frac{1}{k+1}\right)$. So, the proof is complete.

\input{sections/pl_proof}
\input{sections/complexity}
\input{sections/ML}

%% file: sections/pl_proof.tex
\subsection{Proof of Theorem \ref{thm:pl_diminishing_step}}
\label{sub:prf_thm3}

We denote the following, which will be used throughout this proof:
\[
\boldsymbol{K} := K_n \otimes \bI_d,\qquad
\boldsymbol{H} := \frac{1}{n}(\boldsymbol{1}_n \boldsymbol{1}_n^\top \otimes \bI_d),\qquad
\boldsymbol{Q} := R\Lambda^{-1}R^\top \otimes \bI_d,
\]
\[
\bar{v}(k) := \frac{1}{n}(\boldsymbol{1}_n^\top \otimes \bI_d)\boldsymbol{v}(k), 
\qquad
\bar{\zeta}(k) := \frac{1}{n}(\boldsymbol{1}_n^\top \otimes \bI_d)\boldsymbol{\zeta}(k), \qquad
\bar{\boldsymbol{\zeta}}(k) := \boldsymbol{1}_n \otimes \bar{\zeta}(k)
\]
\[
\boldsymbol{g}(k) := \nabla F(\boldsymbol{x}(k)),\qquad
\bar{\boldsymbol{g}}(k) := \boldsymbol{H} \boldsymbol{g}(k)
\]
\[
\boldsymbol{g}^0(k) := \nabla F(\bar{\boldsymbol{x}}(k)),\qquad
\bar{\boldsymbol{g}}^0(k) := \boldsymbol{H} \boldsymbol{g}^0(k)
= \frac{1}{n}(\boldsymbol{1}_n \otimes \nabla f(\bar{\boldsymbol{x}}(k))).
\]
We again follow a Lyapunov-based technique as formulated below.
Define the Lyapunov function
\[
V(k) := \sum_{i=1}^4 V_{i}(k),
\]
where
\begin{align*}
V_{1}(k) &= \frac12 \|\boldsymbol{x}(k)\|_{\boldsymbol{K}}^2,\\
V_{2}(k) &= \frac12\left\|\boldsymbol{v}(k) + \frac{1}{\beta} \boldsymbol{g}^0(k) \right\|_{\boldsymbol{Q}+\frac{\alpha}{\beta}\boldsymbol{K}}^2,\\
V_{3}(k) &= \boldsymbol{x}(k)^\top \boldsymbol{K}\left(\boldsymbol{v}(k) + \frac{1}{\beta} \boldsymbol{g}^0(k)\right),\\
V_{4}(k) &= f(\bar{x}(k)) - f^* = F(\bar{\boldsymbol{x}}(k)) - f^*.
\end{align*}
As shown in the paragraph following~\eqref{eq:V} under Assumption~\ref{ass:connectivity}, from~\eqref{eq:update_law} we have
\[
\bar{v}(k+1) = \bar{v}(k).
\]
So, given the initialization $\sum_{i=1}^n v_i(0) = \mathbf{0}_d$, we have $\bar{v}(k) = 0_{d}$ for all $k \ge 0$. Upon substituting into~\eqref{eq:update_law},
\begin{align}
\bar{\boldsymbol{x}}(k+1)
=
\bar{\boldsymbol{x}}(k)
-
h \bar{\boldsymbol{g}}(k)
-
h \bar{\boldsymbol{\zeta}}(k).
\label{eq:avg_x}
\end{align}

Next, we establish necessary inequalities which will be utilized later in the Lyapunov analysis. 
Since $\nablaF$ is Lipschitz-continuous with constant $L_f > 0$ and $\rho(\boldsymbol{H}) = 1$, from above we have
\begin{align}
\E[\|\boldsymbol{g}^0(k+1) - \boldsymbol{g}^0(k)\|^2] \leq L_f^2\E[\|\boldsymbol{x}(k+1)-\boldsymbol{x}(k)\|^2] & = L_f^2\E[\|-h \bar{\boldsymbol{g}}(k)
-h \bar{\boldsymbol{\zeta}}(k)\|^2]
\notag\\ 
& \leq
2L_f^2h^2n\sigma^2 + L_f^2h^2\E[\|\bar{\boldsymbol{g}}(k)\|^2],
\label{eq:grad1}
\end{align}
where the last inequality follows from unbiased stochastic gradients under Assumption~\ref{ass:stoch_grad}. By definition and Lipschitz continuity from Assumption~\ref{ass:smooth}, the following holds:
\begin{align}
\E[\|\boldsymbol{g}^0(k)-\boldsymbol{g}(k)\|^2] & \leq L_f^2\E[\|\bar{\boldsymbol{x}}(k) - \boldsymbol{x}(k)\|^2] = L_f^2
\E[\|\boldsymbol{x}(k)\|^2_{\boldsymbol{K}}] = L_f^2\|\boldsymbol{x}(k)\|^2_{\boldsymbol{K}},
\label{eq:grad2} \\
\E[\|\bar{\boldsymbol{g}}^0(k)-\bar{\boldsymbol{g}}(k)\|^2] &= \E[\|\boldsymbol{H}(\boldsymbol{g}^0(k)-\boldsymbol{g}(k))\|^2] \leq \E[\|\boldsymbol{g}^0(k)-\boldsymbol{g}(k)\|^2] \leq L_f^2\|\boldsymbol{x}(k)\|^2_{\boldsymbol{K}}.
\label{eq:grad3}
\end{align}
From Polyak--Łojasiewicz inequality under Assumption~\ref{ass:PL}, the global objective satisfies
\begin{align}
\|\nabla f(\bar{\boldsymbol{x}}(k))\|^2 \ge 2\nu \bigl(f(\bar{\boldsymbol{x}}(k))-f^*\bigr).
\end{align}
Equivalently,
\begin{align}
\|\bar{\boldsymbol{g}}^0(k)\|^2 = \frac{1}{n}\|\nabla f(\bar{\boldsymbol{x}}(k))\|^2
\ge \frac{2\nu}{n}\bigl(f(\bar{\boldsymbol{x}}(k))-f^*\bigr).
\label{eq:PL}
\end{align}

Next, we upper bound the first component $\E[V_{1}(k+1)]$ in the Lyapunov function $V(k+1)$.
\begin{align}
&\E[V_{1}(k+1)] \notag\\
&= \frac{1}{2}\E[\|\boldsymbol{x}(k+1)\|_{\boldsymbol{K}}^2] \notag\\
&= \frac{1}{2}\E[\|\boldsymbol{x}(k) - h(\alpha \boldsymbol{L} \boldsymbol{x}(k) + \alpha\boldsymbol{L}\boldsymbol{\varepsilon}(k)+ \beta \boldsymbol{v}(k) + \boldsymbol{g}(k) + \boldsymbol{\zeta}(k))\|_{\boldsymbol{K}}^2] \notag\\
&\leq \frac{1}{2}\|\boldsymbol{x}(k) - h(\alpha \boldsymbol{L} \boldsymbol{x}(k) + \beta \boldsymbol{v}(k) + \boldsymbol{g}(k))\|_{\boldsymbol{K}}^2 +h^2 \alpha^2\rho^2(\boldsymbol{L})\E[\|\boldsymbol{\varepsilon}(k)\|^2] + h^2\E[\|\boldsymbol{\zeta}(k)\|^2]\notag\\
&\leq \frac{1}{2}\|\boldsymbol{x}(k)\|_{\boldsymbol{K}}^2
- h\alpha \|\boldsymbol{x}(k)\|_{\boldsymbol{L}}^2
+ \frac{h^2\alpha^2}{2}\|\boldsymbol{x}(k)\|_{\boldsymbol{L}^2}^2 \notag\\
&\quad
+ \frac{h^2\beta^2}{2}\left\|\boldsymbol{v}(k) + \frac{1}{\beta}\boldsymbol{g}(k)\right\|_{\boldsymbol{K}}^2
- h\beta \boldsymbol{x}(k)^\top (\boldsymbol{I}_{nd}-h\alpha \boldsymbol{L})\boldsymbol{K}
\left(\boldsymbol{v}(k) + \frac{1}{\beta}\boldsymbol{g}(k)\right) \notag\\
&+h^2 \alpha^2\rho^2(\boldsymbol{L})\frac{nd\Delta^2}{4}
+ h^2n\sigma^2 \notag\\
&= \frac{1}{2}\|\boldsymbol{x}(k)\|_{\boldsymbol{K}}^2
- \|\boldsymbol{x}(k)\|_{h\alpha \boldsymbol{L} - \frac{h^2\alpha^2}{2}\boldsymbol{L}^2}^2
+ \frac{h^2\beta^2}{2}
\left\|\boldsymbol{v}(k) + \frac{1}{\beta}\boldsymbol{g}^0(k)
+ \frac{1}{\beta}\boldsymbol{g}(k)
- \frac{1}{\beta}\boldsymbol{g}^0(k)\right\|_{\boldsymbol{K}}^2 \notag\\
&\quad
- h\beta \boldsymbol{x}(k)^\top (\boldsymbol{I}_{nd}-h\alpha \boldsymbol{L})\boldsymbol{K}
\left(\boldsymbol{v}(k) + \frac{1}{\beta}\boldsymbol{g}^0(k)
+ \frac{1}{\beta}\boldsymbol{g}(k)
- \frac{1}{\beta}\boldsymbol{g}^0(k)\right) +h^2 \alpha^2\rho^2(\boldsymbol{L})\frac{nd\Delta^2}{4} \notag\\
&+ h^2n\sigma^2 \notag\\
&\le \frac{1}{2}\|\boldsymbol{x}(k)\|_{\boldsymbol{K}}^2
- \|\boldsymbol{x}(k)\|_{h\alpha \boldsymbol{L} - \frac{h^2\alpha^2}{2}\boldsymbol{L}^2}^2
+ h^2\beta^2\left\|\boldsymbol{v}(k) + \frac{1}{\beta}\boldsymbol{g}^0(k)\right\|_{\boldsymbol{K}}^2
+ h^2\|\boldsymbol{g}(k) - \boldsymbol{g}^0(k)\|^2 \notag\\
&\quad
- h\beta \boldsymbol{x}(k)^\top \boldsymbol{K}\left(\boldsymbol{v}(k) + \frac{1}{\beta}\boldsymbol{g}^0(k)\right)
+ \frac{h}{2}\|\boldsymbol{x}(k)\|_{\boldsymbol{K}}^2
+ \frac{h}{2}\|\boldsymbol{g}(k) - \boldsymbol{g}^0(k)\|^2 \notag\\
&\quad
+ \frac{h^2\alpha^2}{2}\|\boldsymbol{x}(k)\|_{\boldsymbol{L}^2}^2
+ \frac{h^2\beta^2}{2}
\left\|\boldsymbol{v}(k) + \frac{1}{\beta}\boldsymbol{g}^0(k)\right\|_{\boldsymbol{K}}^2
+ \frac{h^2}{2}\|\boldsymbol{g}(k) - \boldsymbol{g}^0(k)\|^2 +h^2 \alpha^2\rho^2(\boldsymbol{L})\frac{nd\Delta^2}{4} \notag\\
&+ h^2n\sigma^2 \notag\\
&= \frac{1}{2}\|\boldsymbol{x}(k)\|_{\boldsymbol{K}}^2
- \|\boldsymbol{x}(k)\|_{h\alpha \boldsymbol{L} - \frac{h}{2}\boldsymbol{K} - \frac{3h^2\alpha^2}{2}\boldsymbol{L}^2}^2
+ \frac{h}{2}(1+3h)\|\boldsymbol{g}(k) - \boldsymbol{g}^0(k)\|^2 \notag\\
&\quad
- h\beta \boldsymbol{x}(k)^\top \boldsymbol{K}\left(\boldsymbol{v}(k) + \frac{1}{\beta}\boldsymbol{g}^0(k)\right)
+ \left\|\boldsymbol{v}(k) + \frac{1}{\beta}\boldsymbol{g}^0(k)\right\|_{\frac{3h^2\beta^2}{2}\boldsymbol{K}}^2 +h^2 \alpha^2\rho^2(\boldsymbol{L})\frac{nd\Delta^2}{4} + h^2n\sigma^2 \notag\\
&\le \frac{1}{2}\|\boldsymbol{x}(k)\|_{\boldsymbol{K}}^2
- \|\boldsymbol{x}(k)\|_{h\alpha \boldsymbol{L} - \frac{h}{2}\boldsymbol{K} - \frac{3h^2\alpha^2}{2}\boldsymbol{L}^2
- \frac{h}{2}(1+3h)L_f^2\boldsymbol{K}}^2 \notag\\
&\quad
- h\beta \boldsymbol{x}(k)^\top \boldsymbol{K}\left(\boldsymbol{v}(k) + \frac{1}{\beta}\boldsymbol{g}^0(k)\right)
+ \left\|\boldsymbol{v}(k) + \frac{1}{\beta}\boldsymbol{g}^0(k)\right\|_{\frac{3h^2\beta^2}{2}\boldsymbol{K}}^2 +h^2 \alpha^2\rho^2(\boldsymbol{L})\frac{nd\Delta^2}{4} + h^2n\sigma^2,
\label{eq:V1_bound}
\end{align}
where, the first inequality follows from Cauchy-Schwarz inequality and $\rho(K) = 1$; the second equality is due to~\eqref{eq:update_law}; the third equality follows from the spectral properties in Appendix~\ref{sub:mat_prop}; and the last inequality follows from~\eqref{eq:grad2}.

Similarly, we bound the second component $\E[V_{2}(k+1)]$ next.
\begin{align}
&\E\!\left[V_{2}(k+1)\right] \notag\\
&= \frac{1}{2}\E\!\left[\|\boldsymbol{v}(k+1) + \frac{1}{\beta}\boldsymbol{g}^0(k+1)\|_{\boldsymbol{Q}+\frac{\alpha}{\beta}\boldsymbol{K}}^2 \right] \notag\\
&= \frac{1}{2}\E\!\left[\|\boldsymbol{v}(k) + \frac{1}{\beta}\boldsymbol{g}^0(k) + h\beta \boldsymbol{L}\boldsymbol{x}(k) + h\beta \boldsymbol{L}\boldsymbol{\varepsilon}(k) + \frac{1}{\beta}(\boldsymbol{g}^0(k+1) - \boldsymbol{g}^0(k))\|_{\boldsymbol{Q}+\frac{\alpha}{\beta}\boldsymbol{K}}^2 \right] \notag\\
&\le \frac{1}{2}\|\boldsymbol{v}(k) + \frac{1}{\beta}\boldsymbol{g}^0(k)\|_{\boldsymbol{Q}+\frac{\alpha}{\beta}\boldsymbol{K}}^2 + h\boldsymbol{x}(k)^\top (\beta \boldsymbol{K} + \alpha \boldsymbol{L})(\boldsymbol{v}(k) + \frac{1}{\beta}\boldsymbol{g}^0(k)) \notag\\
&\quad + \|\boldsymbol{x}(k)\|_{\frac{h^2\beta}{2}(\beta \boldsymbol{L}+\alpha \boldsymbol{L}^2)}^2 + \frac{1}{2\beta^2}\E[\|\boldsymbol{g}^0(k+1) - \boldsymbol{g}^0(k)\|_{\boldsymbol{Q}+\frac{\alpha}{\beta}\boldsymbol{K}}^2] \notag\\
&\quad + \frac{1}{\beta}(\boldsymbol{v}(k) + \frac{1}{\beta}\boldsymbol{g}^0(k) + h\beta \boldsymbol{L}\boldsymbol{x}(k))^\top \left(\boldsymbol{Q} + \frac{\alpha}{\beta}\boldsymbol{K}\right)\E[\boldsymbol{g}^0(k+1) - \boldsymbol{g}^0(k)] \notag\\
&\quad + \frac{h^2\beta^2}{2}\E[\|\boldsymbol{L}\boldsymbol{\varepsilon}(k)\|_{\boldsymbol{Q}+\frac{\alpha}{\beta}\boldsymbol{K}}^2]  + \E[\left( h \beta \boldsymbol{L} \boldsymbol{\varepsilon(k)}\right)^\top\Big(\boldsymbol{Q}+\frac{\alpha}{\beta}\boldsymbol{K}\Big)\frac{1}{\beta}\left( \boldsymbol{g}^0(k+1)-\boldsymbol{g}(k)^0\right)]
\notag\\
&\le \frac{1}{2}\|\boldsymbol{v}(k) + \frac{1}{\beta}\boldsymbol{g}^0(k)\|_{\boldsymbol{Q}+\frac{\alpha}{\beta}\boldsymbol{K}}^2 + h\boldsymbol{x}(k)^\top (\beta \boldsymbol{K} + \alpha \boldsymbol{L})(\boldsymbol{v}(k) + \frac{1}{\beta}\boldsymbol{g}^0(k)) \notag\\
&\quad + \|\boldsymbol{x}(k)\|_{\frac{h^2\beta}{2}(\beta \boldsymbol{L}+\alpha \boldsymbol{L}^2)}^2 + \|\boldsymbol{v}(k) + \frac{1}{\beta}\boldsymbol{g}^0(k)\|_{\frac{h}{2\beta}(\boldsymbol{Q}+\frac{\alpha}{\beta}\boldsymbol{K})}^2 \notag\\
&\quad + \frac{1}{2\beta^2}\E[\|\boldsymbol{g}^0(k+1) - \boldsymbol{g}^0(k)\|_{\boldsymbol{Q}+\frac{\alpha}{\beta}\boldsymbol{K}}^2] + \frac{1}{2h\beta}\E[\|\boldsymbol{g}^0(k+1) - \boldsymbol{g}^0(k)\|_{\boldsymbol{Q}+\frac{\alpha}{\beta}\boldsymbol{K}}^2] \notag\\
&\quad + \frac{h^2\beta^2}{2}\|\boldsymbol{L}\boldsymbol{x}(k)\|_{\boldsymbol{Q}+\frac{\alpha}{\beta}\boldsymbol{K}}^2 + \frac{1}{2\beta^2}\E[\|\boldsymbol{g}^0(k+1) - \boldsymbol{g}^0(k)\|_{\boldsymbol{Q}+\frac{\alpha}{\beta}\boldsymbol{K}}^2] \notag\\
&\quad + h^2\beta^2\rho\left(\boldsymbol{Q}+\frac{\alpha}{\beta}\boldsymbol{K}\right)\rho^2(\boldsymbol{L})\frac{nd\Delta^2}{4} + \frac{1}{2\beta^2}\E[\|\boldsymbol{g}^0(k+1) - \boldsymbol{g}^0(k)\|_{\boldsymbol{Q}+\frac{\alpha}{\beta}\boldsymbol{K}}^2]\notag\\
&\le \frac{1}{2}\|\boldsymbol{v}(k) + \frac{1}{\beta}\boldsymbol{g}^0(k)\|_{\boldsymbol{Q}+\frac{\alpha}{\beta}\boldsymbol{K}}^2 + h\boldsymbol{x}(k)^\top (\beta \boldsymbol{K} + \alpha \boldsymbol{L})(\boldsymbol{v}(k) + \frac{1}{\beta}\boldsymbol{g}^0(k)) \notag\\
&\quad + \|\boldsymbol{x}(k)\|_{h^2\beta(\beta \boldsymbol{L}+\alpha \boldsymbol{L}^2)}^2 + \|\boldsymbol{v}(k) + \frac{1}{\beta}\boldsymbol{g}^0(k)\|_{\frac{h}{2\beta}(\boldsymbol{Q}+\frac{\alpha}{\beta}\boldsymbol{K})}^2 \notag\\
&\quad + \left( \frac{3}{2\beta^2} + \frac{1}{2h\beta} \right)\E[\|\boldsymbol{g}^0(k+1) - \boldsymbol{g}^0(k)\|_{\boldsymbol{Q}+\frac{\alpha}{\beta}\boldsymbol{K}}^2]  + h^2\beta^2\left( \frac{1}{\rho_2(L)} + \frac{\alpha}{\beta} \right)\rho^2(\boldsymbol{L})\frac{nd\Delta^2}{4} \notag\\
&\le \frac{1}{2}\|\boldsymbol{v}(k) + \frac{1}{\beta}\boldsymbol{g}^0(k)\|_{\boldsymbol{Q}+\frac{\alpha}{\beta}\boldsymbol{K}}^2 + h\boldsymbol{x}(k)^\top (\beta \boldsymbol{K} + \alpha \boldsymbol{L})(\boldsymbol{v}(k) + \frac{1}{\beta}\boldsymbol{g}^0(k)) \notag\\
&\quad + \|\boldsymbol{x}(k)\|_{h^2\beta(\beta \boldsymbol{L}+\alpha \boldsymbol{L}^2)}^2 + \|\boldsymbol{v}(k) + \frac{1}{\beta}\boldsymbol{g}^0(k)\|_{\frac{h}{2\beta}(\boldsymbol{Q}+\frac{\alpha}{\beta}\boldsymbol{K})}^2 \notag\\
&\quad + h\left( \frac{3h}{2\beta^2} + \frac{1}{2\beta} \right)\left( \frac{1}{\rho_2(L)} + \frac{\alpha}{\beta} \right) \left( L_f^2\|\boldsymbol{\bar{g}}(k)\|^2 + 2L_f^2 n\sigma^2 \right) \notag\\
&+ h^2\beta^2\left( \frac{1}{\rho_2(L)} + \frac{\alpha}{\beta} \right)\rho^2(\boldsymbol{L})\frac{nd\Delta^2}{4},
\label{eq:V2_bound}
\end{align}
where, the first inequality follows from Cauchy-Schwarz inequality; the second equality is due to~\eqref{eq:update_law}; the third equality follows from the spectral properties in Appendix~\ref{sub:mat_prop}; the third inequality holds since $\rho(Q + \frac{\alpha}{\beta}K) \le \rho(Q) + \frac{\alpha}{\beta} \rho(K)$, the spectral properties in Appendix~\ref{sub:mat_prop}, $ \rho(K)= 1$; and the last inequality holds follows from~\eqref{eq:grad1}.

We bound the third component $\E[V_{3}(k+1)]$ next.
\begin{align}
&\E[V_{3}(k+1)] \notag\\
&= \E[\boldsymbol{x}(k+1)^\top \boldsymbol{K} (\boldsymbol{v}(k+1) + \frac{1}{\beta}\boldsymbol{g}^0(k+1))] \notag\\
&= \E[(\boldsymbol{x}(k) - h(\alpha \boldsymbol{L}\boldsymbol{x}(k) + \beta \boldsymbol{v}(k) + \boldsymbol{g}(k) + \boldsymbol{g}^0(k) - \boldsymbol{g}^0(k) + \boldsymbol{\zeta}(k) + \alpha\boldsymbol{L}\boldsymbol{\varepsilon}(k)))^\top \boldsymbol{K} \notag\\
&\quad \times (\boldsymbol{v}(k) + \frac{1}{\beta}\boldsymbol{g}^0(k) + h\beta \boldsymbol{L}\boldsymbol{x}(k) + h\beta\boldsymbol{L}\boldsymbol{\varepsilon}(k) + \frac{1}{\beta}(\boldsymbol{g}^0(k+1) - \boldsymbol{g}^0(k)))] \notag\\
&= \boldsymbol{x}(k)^\top (\boldsymbol{K} - h(\alpha + h\beta^2)\boldsymbol{L})(\boldsymbol{v}(k) + \frac{1}{\beta}\boldsymbol{g}^0(k)) \notag\\
&\quad + \|\boldsymbol{x}(k)\|_{h\beta(\boldsymbol{L}-h\alpha \boldsymbol{L}^2)}^2 + \frac{1}{\beta}\boldsymbol{x}(k)^\top (\boldsymbol{K} - h\alpha \boldsymbol{L})\E[\boldsymbol{g}^0(k+1) - \boldsymbol{g}^0(k)] \notag\\
&\quad - h\beta\|\boldsymbol{v}(k) + \frac{1}{\beta}\boldsymbol{g}^0(k)\|_{\boldsymbol{K}}^2 - h(\boldsymbol{v}(k) + \frac{1}{\beta}\boldsymbol{g}^0(k))^\top \boldsymbol{K} \E[\boldsymbol{g}^0(k+1) - \boldsymbol{g}^0(k)] \notag\\
&\quad - h(\boldsymbol{g}(k) - \boldsymbol{g}^0(k))^\top \boldsymbol{K} (\boldsymbol{v}(k) + \frac{1}{\beta}\boldsymbol{g}^0(k) + h\beta \boldsymbol{L}\boldsymbol{x}(k) + \frac{1}{\beta}\E[\boldsymbol{g}^0(k+1) - \boldsymbol{g}^0(k)]) \notag\\
&\quad + \frac{1}{\beta}\E[(\boldsymbol{g}^0(k+1) - \boldsymbol{g}^0(k))^\top \boldsymbol{K} \boldsymbol{x}(k+1)] - h^2\alpha\beta\E[\boldsymbol{\varepsilon}(k)^\top \boldsymbol{L} \boldsymbol{K} \boldsymbol{L} \boldsymbol{\varepsilon}(k)] \notag\\
&- h^2 \beta \E[\boldsymbol{\zeta}(k)^\top \boldsymbol{K} \boldsymbol{L} \boldsymbol{\varepsilon}(k)] \notag\\
&\le \boldsymbol{x}(k)^\top (\boldsymbol{K} - h\alpha \boldsymbol{L})(\boldsymbol{v}(k) + \frac{1}{\beta}\boldsymbol{g}^0(k)) + \frac{h^2\beta^2}{2}\|\boldsymbol{L}\boldsymbol{x}(k)\|^2 \notag\\
&\quad + \frac{h^2\beta^2}{2}\|\boldsymbol{v}(k) + \frac{1}{\beta}\boldsymbol{g}^0(k)\|_{\boldsymbol{K}}^2 + \|\boldsymbol{x}(k)\|_{h\beta(\boldsymbol{L}-h\alpha \boldsymbol{L}^2)}^2 \notag\\
&\quad + \frac{h}{2}\|\boldsymbol{x}(k)\|_{\boldsymbol{K}}^2 + \frac{1}{2h\beta^2}\E[\|\boldsymbol{g}^0(k+1) - \boldsymbol{g}^0(k)\|^2] + \frac{h^2\alpha^2}{2}\|\boldsymbol{L}\boldsymbol{x}(k)\|^2 \notag\\
&\quad + \frac{1}{2\beta^2}\E[\|\boldsymbol{g}^0(k+1) - \boldsymbol{g}^0(k)\|^2] - h\beta\|\boldsymbol{v}(k) + \frac{1}{\beta}\boldsymbol{g}^0(k)\|_{\boldsymbol{K}}^2 \notag\\
&\quad + \frac{h^2}{2}\|\boldsymbol{v}(k) + \frac{1}{\beta}\boldsymbol{g}^0(k)\|_{\boldsymbol{K}}^2 + \frac{1}{2}\E[\|\boldsymbol{g}^0(k+1) - \boldsymbol{g}^0(k)\|^2] \notag\\
&\quad + \frac{h}{2}\|\boldsymbol{g}(k) - \boldsymbol{g}^0(k)\|^2 + \frac{h}{2}\|\boldsymbol{v}(k) + \frac{1}{\beta}\boldsymbol{g}^0(k)\|_{\boldsymbol{K}}^2 + \frac{h^2}{2}\|\boldsymbol{g}(k) - \boldsymbol{g}^0(k)\|^2 \notag\\
&\quad + \frac{h^2\beta^2}{2}\|\boldsymbol{L}\boldsymbol{x}(k)\|^2 + \frac{h^2}{2}\|\boldsymbol{g}(k) - \boldsymbol{g}^0(k)\|^2 + \frac{1}{2\beta^2}\E[\|\boldsymbol{g}^0(k+1) - \boldsymbol{g}^0(k)\|^2] \notag\\
&+ \frac{h^2 \beta}{2} (n\sigma^2 + \rho^2(\boldsymbol{L})\frac{nd\Delta^2}{4}) \notag\\
&= \boldsymbol{x}(k)^\top (\boldsymbol{K} - h\alpha \boldsymbol{L})(\boldsymbol{v}(k) + \frac{1}{\beta}\boldsymbol{g}^0(k)) + \frac{h}{2}(1 + 2h)\|\boldsymbol{g}(k) - \boldsymbol{g}^0(k)\|^2 \notag\\
&\quad + \|\boldsymbol{x}(k)\|_{h(\beta \boldsymbol{L}+\frac{1}{2}\boldsymbol{K})+h^2(\frac{\alpha^2}{2}-\alpha\beta+\beta^2)\boldsymbol{L}^2}^2 \notag\\
&\quad + (\frac{1}{2h\beta^2} + \frac{1}{\beta^2} + \frac{1}{2})\E[\|\boldsymbol{g}^0(k+1) - \boldsymbol{g}^0(k)\|^2] 
- \|\boldsymbol{v}(k) + \frac{1}{\beta}\boldsymbol{g}^0(k)\|_{h(\beta-\frac{1}{2}-\frac{h}{2}-\frac{h\beta^2}{2})\boldsymbol{K}}^2 \notag \\
&+ \frac{h^2 \beta}{2} (n\sigma^2 + \rho^2(\boldsymbol{L})\frac{nd\Delta^2}{4}) \notag\\
&\le \boldsymbol{x}(k)^\top \boldsymbol{K} (\boldsymbol{v}(k) + \frac{1}{\beta}\boldsymbol{g}^0(k)) - h\alpha \boldsymbol{x}(k)^\top \boldsymbol{L} (\boldsymbol{v}(k) + \frac{1}{\beta}\boldsymbol{g}^0(k)) \notag\\
&\quad + \|\boldsymbol{x}(k)\|_{h(\beta \boldsymbol{L}+\frac{1}{2}\boldsymbol{K})+h^2(\frac{\alpha^2}{2}-\alpha\beta+\beta^2)\boldsymbol{L}^2+\frac{h}{2}(1+2h)L_f^2\boldsymbol{K}}^2 \notag\\
&\quad + h(\frac{1}{2\beta^2} + \frac{h}{\beta^2} + \frac{h}{2}) \left( L_f^2\|\boldsymbol{\bar{g}}(k)\|^2 + 2L_f^2 n\sigma^2 \right) 
- \|\boldsymbol{v}(k) + \frac{1}{\beta}\boldsymbol{g}^0(k)\|_{h(\beta-\frac{1}{2}-\frac{h}{2}-\frac{h\beta^2}{2})\boldsymbol{K}}^2 \notag \\
& + \frac{h^2 \beta}{2} (n\sigma^2 + \rho^2(\boldsymbol{L})\frac{nd\Delta^2}{4}),
\label{eq:V3_bound}
\end{align}
where, the first inequality follows from the Cauchy-Schwarz inequality, the spectral properties in Appendix~\ref{sub:mat_prop}, and $\rho(K)=1$; the second equality is due to~\eqref{eq:update_law}; the third equality follows from the spectral properties in Appendix~\ref{sub:mat_prop}; and the last inequality follows from~\eqref{eq:grad1}-\eqref{eq:grad2}.\\

Finally, we bound the fourth component $\E[V_{4}(k+1)]$ next.
\begin{align}
&\E[V_{4}(k+1)] \notag\\
&= \E[F(\bar{\boldsymbol{x}}(k+1)) - f^*] \notag\\
&= \E[F(\bar{\boldsymbol{x}}(k)) - f^* + F(\bar{\boldsymbol{x}}(k+1)) - F(\bar{\boldsymbol{x}}(k))] \notag\\
&\le F(\bar{\boldsymbol{x}}(k)) - f^* - h \E[\bar{\boldsymbol{g}}(k)^\top \boldsymbol{g}^0(k)] + \frac{h^2 L_f}{2}\E[\|\bar{\boldsymbol{g}}(k)\|^2] 
+ \frac{L_f}{2}\E\left[\left\| - h \bar{\boldsymbol{\zeta}}(k)\right\|^2\right] \notag\\
&= f(\bar{\boldsymbol{x}}(k)) - f^* - \frac{h}{2}\bar{\boldsymbol{g}}(k)^\top (\bar{\boldsymbol{g}}(k) + \bar{\boldsymbol{g}}^0(k) - \bar{\boldsymbol{g}}(k)) \notag\\
&\quad - \frac{h}{2}(\bar{\boldsymbol{g}}(k) - \bar{\boldsymbol{g}}^0(k) + \bar{\boldsymbol{g}}^0(k))^\top \bar{\boldsymbol{g}}^0(k) + \frac{h^2 L_f}{2}\|\bar{\boldsymbol{g}}(k)\|^2 
+ \frac{L_f}{2}\left( 2h^2 n \sigma^2 \right) \notag\\
&\le f(\bar{\boldsymbol{x}}(k)) - f^* - \frac{h}{4}\|\bar{\boldsymbol{g}}(k)\|^2 + \frac{h}{4}\|\bar{\boldsymbol{g}}^0(k) - \bar{\boldsymbol{g}}(k)\|^2 - \frac{h}{4}\|\bar{\boldsymbol{g}}^0(k)\|^2 \notag\\
&\quad + \frac{h}{4}\|\bar{\boldsymbol{g}}^0(k) - \bar{\boldsymbol{g}}(k)\|^2 + \frac{h^2 L_f}{2}\|\bar{\boldsymbol{g}}(k)\|^2 
+ L_f h^2 n \sigma^2 \notag\\
&= f(\bar{\boldsymbol{x}}(k)) - f^* - \frac{h}{4}(1 - 2h L_f)\|\bar{\boldsymbol{g}}(k)\|^2 + \frac{h}{2}\|\bar{\boldsymbol{g}}^0(k) - \bar{\boldsymbol{g}}(k)\|^2 
- \frac{h}{4}\|\bar{\boldsymbol{g}}^0(k)\|^2 + L_f h^2 n \sigma^2 \notag\\
&\le f(\bar{\boldsymbol{x}}(k)) - f^* - \frac{h}{4}(1 - 2h L_f)\|\bar{\boldsymbol{g}}(k)\|^2 + \|\boldsymbol{x}(k)\|_{\frac{h}{2}L_f^2 \boldsymbol{K}}^2 
- \frac{h\nu}{2n}(f(\bar{\boldsymbol{x}}(k)) - f^*) + L_f h^2 n \sigma^2,
\label{eq:V4_bound}
\end{align}
where the first inequality holds follows from smoothness of $F$,~\eqref{ass:smooth} and~\eqref{eq:avg_x}; the second inequality is due to Cauchy-Schwarz inequality;
the third equality holds since $\boldsymbol{\bar{g}(k)^\top} \boldsymbol{g}^0(k)= \boldsymbol{g}(k)^\top \boldsymbol{H} \boldsymbol{g}^0(k) = \boldsymbol{g(}k)^\top \boldsymbol{H} \boldsymbol{H} \boldsymbol{g}^0(k) = \boldsymbol{\bar{g}(k)^\top} \bar{\boldsymbol{g}}^0(k)$;
and the last inequality follows from~\eqref{eq:grad3} and~\eqref{eq:PL}.

Combining the above components of $\E[V(k+1)]$ in~\eqref{eq:V1_bound}-\eqref{eq:V4_bound}, we have
\begin{align}
&\E[V(k+1)] \notag\\
&= \sum_{i=1}^4 \E[V_{i}(k+1)] \notag\\
&\le V(k) - \|\boldsymbol{x}(k)\|_{h\alpha \boldsymbol{L} - \frac{h}{2}\boldsymbol{K} - \frac{3h^2\alpha^2}{2}\boldsymbol{L}^2 - \frac{h}{2}(1+3h)L_f^2\boldsymbol{K}}^2 \notag\\
&\quad + \|\boldsymbol{x}(k)\|_{h^2\beta(\beta \boldsymbol{L}+\alpha \boldsymbol{L}^2)}^2 + \|\boldsymbol{x}(k)\|_{h(\beta \boldsymbol{L}+\frac{1}{2}\boldsymbol{K})+h^2(\frac{\alpha^2}{2}-\alpha\beta+\beta^2)\boldsymbol{L}^2+\frac{h}{2}(1+2h)L_f^2\boldsymbol{K}}^2 + \|\boldsymbol{x}(k)\|_{\frac{h}{2}L_f^2 \boldsymbol{K}}^2 \notag\\
&\quad - \|\boldsymbol{v}(k) + \frac{1}{\beta}\boldsymbol{g}^0(k)\|_{h(\beta-\frac{1}{2}-\frac{h}{2}-\frac{h\beta^2}{2})\boldsymbol{K} - \frac{h}{2\beta}\boldsymbol{Q} - \frac{h}{2\beta}\frac{\alpha}{\beta}\boldsymbol{K}}^2  + \|\boldsymbol{v}(k) + \frac{1}{\beta}\boldsymbol{g}^0(k)\|_{\frac{3h^2\beta^2}{2}\boldsymbol{K}}^2 \notag\\
&\quad - \left( \frac{h}{4}(1-2h L_f) - h \left(\frac{h}{\beta^2} + \frac{h}{2} + \frac{1}{2\beta^2} \right) L_f^2 - h \left( \frac{3h}{2\beta^2} + \frac{1}{2\beta} \right)  \left( \frac{1}{\rho_2(L)} + \frac{\alpha}{\beta} \right)  L_f^2 \right) \|\bar{\boldsymbol{g}}(k)\|^2 \notag\\
&\quad - \frac{h\nu}{2n}(f(\bar{\boldsymbol{x}}(k)) - f^*) + \mathcal{C}_{noise} \notag\\
&= V(k) - \|\boldsymbol{x}(k)\|_{h \boldsymbol{M}_1 - h^2 \boldsymbol{M}_2}^2 - \left\|\boldsymbol{v}(k) + \frac{1}{\beta}\boldsymbol{g}^0(k)\right\|_{h \boldsymbol{M}_3 - h^2 \boldsymbol{M}_4}^2 \notag\\
&\quad - (h\epsilon_3 - h^2\epsilon_4)\|\bar{\boldsymbol{g}}(k)\|^2 - \frac{h\nu}{2n}(f(\bar{\boldsymbol{x}}(k)) - f^*) + C_A,
\label{eq:V_total_bound}
\end{align}
where the matrices $\boldsymbol{M}_1, \boldsymbol{M}_2, \boldsymbol{M}_3, \boldsymbol{M}_4$ are defined by
\begin{align}
\boldsymbol{M}_1 &= (\alpha - \beta)\boldsymbol{L} - \frac{1}{2}(2+3L_f^2)\boldsymbol{K}, \notag\\
\boldsymbol{M}_2 &= \beta^2 \boldsymbol{L} + (2\alpha^2 + \beta^2)\boldsymbol{L}^2 + \frac{5}{2}L_f^2\boldsymbol{K}, \notag\\
\boldsymbol{M}_3 &= (\beta - \frac{1}{2} - \frac{\alpha}{2\beta^2})\boldsymbol{K} - \frac{1}{2\beta}\boldsymbol{Q}, \notag\\
\boldsymbol{M}_4 &= (2\beta^2 + \frac{1}{2})\boldsymbol{K}, \notag
\end{align}
and $C_A$ collects all variance terms arising from stochastic gradients and quantization as
\begin{align}
C_A &= 
h^2 \alpha^2\rho^2(\boldsymbol{L})\frac{nd\Delta^2}{4} + h^2n\sigma^2  +h\left( \frac{3 h} {2 \beta^2} + \frac{1}{2 \beta}\right)\left( \frac{1}{\rho_2(L)} + \frac{\alpha}{\beta} \right)\left( 2L_f^2 n \sigma^2\right) \notag\\
&\quad + h^2\beta^2\left( \frac{1}{\rho_2(\boldsymbol{L})} + \frac{\alpha}{\beta} \right)\rho^2(\boldsymbol{L})\frac{nd\Delta^2}{4} + L_f h^2 n \sigma^2  + h\left(\frac{h}{\beta^2} + \frac{h}{2} + \frac{1}{2\beta^2} \right) \left( 2L_f^2 n \sigma^2\right) \notag\\
&+ \frac{h^2 \beta}{2} (n\sigma^2 + \rho^2(\boldsymbol{L})\frac{nd\Delta^2}{4}) \notag \\
&=: h \tilde{C}_A.
\end{align}

In the following step, we obtain a recursion of $\E[V(k+1)]$ from~\eqref{eq:V_total_bound}. Here, we utilize the parametric conditions of $\alpha, \beta, h$ from the theorem statement.
From $\beta+\kappa_{1}\le\alpha$ and $\kappa_{1}=\frac{1}{\rho_{2}(\boldsymbol{L})}(4+\frac{3}{2}L_{f}^{2})$, we have
\begin{align}
(\alpha-\beta)\rho_{2}(\boldsymbol{L})-\frac{1}{2}(2+3L_{f}^{2})\ge1. \label{eq:cond_1}
\end{align}
From $\beta\ge\kappa_{4}$, we have
\begin{align}
(\beta-\frac{1}{2}-\frac{\kappa_{2}}{2\beta})-\frac{1}{2\beta\rho_{2}(\boldsymbol{L})}\ge1. \label{eq:cond_2}
\end{align}
From $\alpha\le\kappa_{2}\beta$ and $\beta\ge\kappa_{5}$, we have
\begin{align}
\epsilon_{3} &=\frac{1}{4}-\frac{1}{2\beta}\left(\frac{1}{\beta}+\frac{1}{\rho_{2}(\boldsymbol{L})}+\frac{\alpha}{\beta}\right)L_{f}^{2} \notag\\
&\ge\frac{1}{4}-\frac{1}{2\beta}\left(\frac{1}{\beta}+\frac{1}{\rho_{2}(\boldsymbol{L})}+\kappa_{2}\right)L_{f}^{2}\ge\frac{1}{8}. \label{eq:epsilon_3_bound}
\end{align}
From \eqref{eq:epsilon_3_bound}, and $0<h<\frac{\epsilon_{3}}{\epsilon_{4}}$, we have
\begin{align}
h\epsilon_{3}-h^{2}\epsilon_{4}>0. \label{eq:eta_cond}
\end{align}
From \eqref{eq:cond_1}, \eqref{eq:cond_2}, and $\alpha\le\kappa_{2}\beta$, we have
\begin{align}
\boldsymbol{M}_{1} &=(\alpha-\beta)\boldsymbol{L}-\frac{1}{2}(2+3L_{f}^{2})\boldsymbol{K} \notag\\
&\ge(\alpha-\beta)\rho_{2}(\boldsymbol{L})\boldsymbol{K}-\frac{1}{2}(2+3L_{f}^{2})\boldsymbol{K}\ge \boldsymbol{K}, \label{eq:M1_bound}\\
\boldsymbol{M}_{2} &=\beta^{2}\boldsymbol{L}+(2\alpha^{2}+\beta^{2})\boldsymbol{L}^{2}+\frac{5}{2}L_{f}^{2}\boldsymbol{K}\le\epsilon_{2}\boldsymbol{K}, \label{eq:M2_bound}\\
\boldsymbol{M}_{3} &=(\beta-\frac{1}{2}-\frac{\alpha}{2\beta^{2}})\boldsymbol{K}-\frac{1}{2\beta}\boldsymbol{Q} \notag\\
&\ge(\beta-\frac{1}{2}-\frac{\kappa_{2}}{2\beta})\boldsymbol{K}-\frac{1}{2\beta\rho_{2}(\boldsymbol{L})}\boldsymbol{K}\ge \boldsymbol{K}, \label{eq:M3_bound}\\
\boldsymbol{M}_{4} &=(2\beta^{2}+\frac{1}{2})\boldsymbol{K}\le\epsilon_{2}\boldsymbol{K}. \label{eq:M4_bound}
\end{align}
Denote $\hat{V}(k)=\|\boldsymbol{x}(k)\|_{\boldsymbol{K}}^{2}+\|\boldsymbol{v}(k)+\frac{1}{\beta}\boldsymbol{g}(k)^{0}\|_{\boldsymbol{K}}^{2}+f(\bar{x}(k))-f^{*}$.
Then, upon substituting in~\eqref{eq:V_total_bound} from~\eqref{eq:M1_bound}-\eqref{eq:M4_bound}, and with the notation $\epsilon_1 = \min\{1, \frac{\nu}{2n}\}$, we have
\begin{align}
\E[V(k+1)] \le V(k)-h(\epsilon_{1}-h\epsilon_{2})\hat{V}(k) + C_A. \label{eq:V_recurrence}
\end{align}
From the Cauchy-Schwarz inequality, we further have
\begin{align}
\epsilon_{6}\hat{V}(k)\le V(k)\le\epsilon_{5}\hat{V}(k), \label{eq:V_equivalence}
\end{align}
where $\epsilon_{6}=\min\{\frac{1}{2\rho(\boldsymbol{L})},\frac{\alpha-\beta}{2\alpha}\}$.
Since $0<h<\frac{\epsilon_{1}}{\epsilon_{2}}$, the coefficient of $\hat{V}(k)$ in~\eqref{eq:V_recurrence} is negative. 


Substituting~\eqref{eq:V_equivalence} in~\eqref{eq:V_recurrence} and $C_A = h \tilde{C}_A$, we have
\begin{align}
\E[V(k+1)] &\le (1 - h c_{1})\E[V(k)] + h \tilde{C}_A,
\label{eq:pl_const_recurrence}
\end{align}
where $c_{1} := \frac{\epsilon_{1}-h\epsilon_{2}}{\epsilon_5}$. Since $h < \frac{\epsilon_1}{\epsilon_2}$ and $\epsilon_1 <= 2\sqrt{\epsilon_{2} \epsilon_{5}}$, the coefficient $(1-h c_1) \in [0,1)$. So, we unroll the recursion~\eqref{eq:pl_const_recurrence} from $k$ to $0$, to obtain
\begin{align}
\E[V(k)] &\le (1 - h c_{1})^{k}V(0) + \frac{\tilde{C}_A}{c_{1}}.
\label{eq:pl_const_convergence}
\end{align}

Denoting $C := \frac{\tilde{C}_A}{c_{1}}$, as $k \to \infty$, the transient term vanishes (since $0 <= (1 - h c_{1}) < 1$), and we have
\[
\limsup_{k \to \infty} \E[V(k)] \le C.
\]
Hence, using the relation $\epsilon_6 \hat{V}(k) \le V(k) \le \epsilon_5 \hat{V}(k)$, for all $i\in[n]$ we have
\begin{align}
\limsup_{k \to \infty} \left(\E[\|x_{i,k}-\bar{x}_{k}\|^{2}] + \E[f(\bar{x}_{k})-f^{*}]\right)
&\le \limsup_{k \to \infty} \left(\E[\|\boldsymbol{x}_{k}\|_{\boldsymbol{K}}^{2}] + \E[f(\bar{x}_{k})-f^{*}]\right) \notag\\
&\le \limsup_{k \to \infty} \E[\hat{V}_{k}] \notag \\
& \le \limsup_{k \to \infty} \frac{1}{\epsilon_{6}}\E[V_{k}] \notag\\
&\le \frac{C}{\epsilon_{6}}.
\end{align}
The proof is complete.

%% file: sections/complexity.tex
\section{Network scaling of $\eta$, $\epsilon_i$, and the residual constants $A,B$}\label{app:AB_scaling}

Recall that $L$ is the Laplacian with $\rho(L)=\lambda_{\max}(L)$ and $\rho_2(L)=\lambda_2(L)$.
In the fixed step-size regime for Theorem~\ref{thm:sq_pdgd_convergence}, we defined in Section~\ref{sec:conv}:
\begin{align}
\epsilon_1 &:= \max\!\left\{\frac{1}{\nu_1}\Big(\frac{L_f^2}{2\beta}+\beta\,\rho(L)\Big),\;\frac12\right\},\label{eq:eps1_def_app}\\
\epsilon_2 &:= \min\!\left\{\frac{\beta}{2},\;\epsilon_1\nu_1\right\},\label{eq:eps2_def_app}\\
\epsilon_3 &:= \max\!\left\{\epsilon_1+\frac12,\;\frac{\epsilon_1}{\rho_2(L)}+\frac{\alpha}{2\beta}+\frac12\right\},\label{eq:eps3_def_app}\\
\epsilon_4 &:= \min\!\left\{\epsilon_1-\frac12,\;\frac{\alpha}{2\beta}-\frac12\right\},\label{eq:eps4_def_app}\\
\epsilon_5 &:= \max\!\left\{\beta^2\rho^2(L)+3\alpha^2\rho^2(L)+3L_f^2,\;3\beta^2\right\},\label{eq:eps5_def_app}\\
\eta &:= \sqrt{2}\,\max\!\left\{\frac{2\epsilon_1}{\rho_2(L)}+\alpha+1,\;4\epsilon_1+1\right\},\label{eq:eta_def_app}
\end{align}
and the residual constants:
\begin{align}
A &= \Bigg(\frac{\epsilon_3\eta h}{\,2\epsilon_2\epsilon_4-h\eta\epsilon_3\epsilon_5\,}\Bigg)\,
\frac{nd}{4}\,(2\alpha^2+\beta^2)\,\rho^2(L),\label{eq:A_def_app}\\
B &= \Bigg(\frac{\epsilon_3\eta h}{\,2\epsilon_2\epsilon_4-h\eta\epsilon_3\epsilon_5\,}\Bigg)\,
2n.\label{eq:B_def_app}
\end{align}

We treat the problem parameters $(L_f,\nu)$ as $\bigo(1)$ and focus on understanding the influence of network parameters, specifically the network scale $n$ and the graph Laplacian spectral properties $(\rho(L),\rho_2(L))$, on the residual error. Recall from Lemma~\ref{lem:rsi_augmented} that the effective RSI constant is given by
\[
\nu_1 = \min \left\{ \frac{\nu}{2n}, \alpha\rho_2(L) - \frac{2nL_f^2 + \nu L_f}{\nu} \right\}.
\]
The convergence condition in Theorem~\ref{thm:sq_pdgd_convergence} requires $\alpha > \frac{2nL_f^2 + \nu L_f}{\nu\rho_2(L)}$. Note that the lower bound for $\alpha$ scales as $\bigo(n/\rho_2(L))$. We select $\alpha$ to be strictly larger than this lower bound while maintaining the same asymptotic order. Specifically, we set $\alpha = \bigo(n/\rho_2(L))$ with a sufficiently large constant factor such that the second term in the definition of $\nu_1$ scales as $\bigo(n)$. Since the first term scales as $\bigo(1/n)$, for sufficiently large $n$, the minimum is determined by the first term. Consequently, we proceed with the analysis assuming $\nu_1 = \frac{\nu}{2n} = \bigo(1/n)$, alongside $\beta = \bigo(1)$ and $\alpha = \bigo(n/\rho_2(L))$.

From~\eqref{eq:eps1_def_app} with $\nu_1=\bigo(\frac{1}{n})$ and $\beta=\bigo(1)$,
\begin{equation*}
\epsilon_1
=\bigo\!\Big(\max\!\left\{\tfrac{1}{\nu_1}(1+\rho(L)),\;\frac12\right\}\Big),
\end{equation*}
implying that
\begin{equation}\label{eq:eps1_scaling_app}
\epsilon_1
=\bigo\!\Big(\max\!\left\{n(1+\rho(L)),\;\frac12\right\}\Big)
=\bigo\!\big(n\rho(L)\big).
\end{equation}
Then $\epsilon_1\nu_1=\bigo(1+\rho(L))$. So~\eqref{eq:eps2_def_app} implies
\begin{equation}\label{eq:eps2_scaling_app}
\epsilon_2=\bigo\!\big(\min\!\left\{1,\;1+\rho(L)\right\}\big) = \bigo(1).
\end{equation}
Similarly, using the definitions in~\eqref{eq:eps3_def_app},~\eqref{eq:eta_def_app},~\eqref{eq:eps5_def_app}, along with~\eqref{eq:eps1_scaling_app}, we have the rest of the constants as
\begin{align}
\epsilon_3
&=\bigo\!\left(\frac{\epsilon_1}{\rho_2(L)}+\alpha\right)
=\bigo\!\left(\frac{n\rho(L)}{\rho_2(L)}\right),\label{eq:eps3_scaling_app}\\
\eta
&=\bigo\!\left(\frac{\epsilon_1}{\rho_2(L)}+\alpha+\epsilon_1\right)
=\bigo\!\left(\frac{n\rho(L)}{\rho_2(L)}\right),\label{eq:eta_scaling_app}\\
\epsilon_5
&=\bigo\!\left(\alpha^2\rho^2(L)\right)
=\bigo\!\left(\frac{n^2\rho^2(L)}{\rho_2^2(L)}\right).\label{eq:eps5_scaling_app}
\end{align}
Consequently, substituting the asymptotic bounds for $\epsilon_3$, $\epsilon_5$, and $\eta$ yields
\begin{equation}\label{eq:eps3eta_scaling_app}
\eta \epsilon_3
=\bigo\!\left(\frac{n^2\rho^2(L)}{\rho_2^2(L)}\right), \quad \eta \epsilon_3 \epsilon_5
=\bigo\!\left(n^4 \frac{\rho^4(L)}{\rho_2^4(L)}\right).
\end{equation}
For $\epsilon_4 = \min\{\epsilon_1-\frac12,\;\frac{\alpha}{2\beta}-\frac12\}$, with $\alpha = \bigo(n/\rho_2(L))$, $\beta=\bigo(1)$, and~\eqref{eq:eps1_scaling_app}, we have 
\begin{equation}\label{eq:eps4_scaling_app}
\epsilon_4 = \bigo\left(\frac{n}{\rho_2(L)}\right).
\end{equation}
Let $R$ be the stepsize-dependent factor appearing in $A$ and $B$ in~\eqref{eq:A_def_app}-\eqref{eq:B_def_app}:
\[
R \;:=\; \frac{\epsilon_3\eta h}{\,2\epsilon_2\epsilon_4-h\eta\epsilon_3\epsilon_5\,}.
\]

The stability condition in Theorem~\ref{thm:sq_pdgd_convergence} requires the stepsize to satisfy $h < \min \left\{\frac{2 \epsilon_2 \epsilon_4}{\eta \epsilon_3 \epsilon_5}, \frac{2\epsilon_3}{\epsilon_2}\right\}$. From the above expressions~\eqref{eq:eps1_scaling_app}-\eqref{eq:eps4_scaling_app}, we observe that
\begin{align*}
    \frac{2 \epsilon_2 \epsilon_4}{\eta \epsilon_3 \epsilon_5} = \bigo\left(\frac{\rho_2^3(L)}{n^3 \rho^4(L)}\right), \, \frac{2\epsilon_3}{\epsilon_2} = \bigo\left(\frac{n \rho(L)}{\rho_2(L)}\right).
\end{align*}
So, the minimum of these two quantities is primarily decided by $\frac{2 \epsilon_2 \epsilon_4}{\eta \epsilon_3 \epsilon_5}$. Having established this, below we will consider two regimes for stepsize $h$: one where $h$ is small enough and the other where $h$ is near its maximal permissible value for stability of~\eqref{eq:update_law}.

\paragraph{Case-I.} Given the rapid growth of $\eta\epsilon_3\epsilon_5$ with respect to the network condition number $\frac{\rho(L)}{\rho_2(L)}$ and network scale $n$, we proceed with the analysis in the small stepsize regime where $h\eta\epsilon_3\epsilon_5 \ll 2\epsilon_2\epsilon_4$. This ensures that the first-order descent terms in the Lyapunov analysis dominate the higher-order discretization errors (see~\eqref{eq:vk_recur_rsi}).
Under the small stepsize regime $h\eta\epsilon_3\epsilon_5 \ll 2\epsilon_2\epsilon_4$, the denominator is dominated by $2\epsilon_2\epsilon_4 - h\eta\epsilon_3\epsilon_5 \approx 2\epsilon_2\epsilon_4$.
From~\eqref{eq:eps2_scaling_app}, $\epsilon_2 = \bigo(1)$. Substituting these relations into the definition of $R$, we obtain:
\begin{equation}\label{eq:R_scaling_app}
R \approx \frac{\epsilon_3 \eta h}{2\epsilon_2\epsilon_4} = \bigo\left(\frac{\epsilon_3\eta h}{\epsilon_4}\right) = \bigo\!\left(h\,\frac{n\rho^2(L)}{\rho_2(L)}\right).
\end{equation}

Also, with $\alpha=\bigo(n/\rho_2(L))$ and $\beta=\bigo(1)$,
\begin{equation}\label{eq:alpha2_term_app}
(2\alpha^2+\beta^2)=\bigo\!\left(\frac{n^2}{\rho_2^2(L)}\right).
\end{equation}
Substituting \eqref{eq:R_scaling_app}--\eqref{eq:alpha2_term_app} into definitions of $A,B$ in~\eqref{eq:A_def_app}-\eqref{eq:B_def_app} yields
\begin{align}
A
&= \bigo\!\left(
\Big(h\,\frac{n\rho^2(L)}{\rho_2(L)}\Big)\cdot nd\cdot
\Big(\frac{n^2}{\rho_2^2(L)}\Big)\cdot \rho^2(L)
\right)
=\bigo\!\left(h\,d\,\frac{n^4\rho^4(L)}{\rho_2^3(L)}\right),\label{eq:A_final_app}\\
B
&= \bigo\!\left(
\Big(h\,\frac{n\rho^2(L)}{\rho_2(L)}\Big)\cdot n
\right)
=\bigo\!\left(h\,\frac{n^2\rho^2(L)}{\rho_2(L)}\right).\label{eq:B_final_app}
\end{align}

\paragraph{Case-II.} Now we consider the maximal stable stepsize regime, i.e., $h=\bigo\!\Big(\frac{\epsilon_2 \, \epsilon_4}{\eta\,\epsilon_3\,\epsilon_5}\Big)$. Upon substituting from~\eqref{eq:eps3eta_scaling_app},
\begin{equation}\label{eq:h_choice_app}
h=\bigo\!\Big(\frac{\epsilon_2 \, \epsilon_4}{\eta\,\epsilon_3\,\epsilon_5}\Big)
=\bigo\!\Big(\frac{\rho_2^3(L)}{n^3\rho^4(L)}\Big),
\end{equation}
Finally, substituting from~\eqref{eq:h_choice_app} into the definitions~\eqref{eq:A_final_app}-\eqref{eq:B_final_app} gives
\begin{align}
A
&=\bigo\!\left(
\Big(\frac{\rho_2^3(L)}{n^3\rho^4(L)}\Big)\cdot d\,\frac{n^4\rho^4(L)}{\rho_2^3(L)}
\right)
=\bigo\!\left(\frac{d\,n}{1}\right)
=\bigo\!\left(d\,n\right),\label{eq:A_after_h_app}\\
B
&=\bigo\!\left(
\Big(\frac{\rho_2^3(L)}{n^3\rho^4(L)}\Big)\cdot \frac{n^2\rho^2(L)}{\rho_2(L)}
\right)
=\bigo\!\left(\frac{\rho_2^2(L)}{n\,\rho^2(L)}\right).\label{eq:B_after_h_app}
\end{align}
Equations~\eqref{eq:A_final_app}-\eqref{eq:B_final_app} and~\eqref{eq:A_after_h_app}-\eqref{eq:B_after_h_app} are exactly the scalings quoted in Remark~\ref{rem:AB_network}.


\section{Additional plots}\label{app:plots}

Under $L_f$-smoothness (and, when applicable, the PL inequality), the objective suboptimality $\mathbb{E}[F(\mathbf{x}) - F(\mathbf{x}^*)]$ is proportional to the squared distance to the solution $\mathbb{E}[\|\mathbf{x} - \mathbf{x}^*\|^2]$. Consequently, the linear dependence on the noise variances $\Delta^2$ and $\sigma^2$ predicted by our bounds manifests directly in the steady-state objective gaps.

In particular, from Theorem~\ref{thm:sq_pdgd_convergence}, the steady-state neighborhood radius under a constant step-size is limited by the quantization resolution $\Delta$ and the stochastic gradient variance $\sigma^2$. Figure \ref{fig:sensitivity} explicitly demonstrates the sensitivity of the asymptotic objective gap to these parameters across varying network sizes. The results exhibit a strict linear dependence on both the squared quantization step $\Delta^2$ and the noise variance $\sigma^2$. This empirical linear trend directly validates the derived theoretical bound from Theorem~\ref{thm:sq_pdgd_convergence}, which bounds the error proportionally to $A\Delta^2 + B\sigma^2$. Furthermore, Figure \ref{fig:sensitivity} captures the explicit network scaling effects established in Remark \ref{rem:AB_network}. In the left subfigure, the slope of the error with respect to $\Delta^2$ increases as the number of agents $n$ grows (with $\sigma^2$ fixed at 0.5). This aligns perfectly with the theoretical scaling of $A$, which depends linearly on the number of agents $n$. Conversely, in the right subfigure, the slope of the error with respect to $\sigma^2$ decreases as $n$ increases (with number of communication bits fixed at 14). This clearly corroborates the derived theoretical behavior where $B$ is inversely proportional to $n$, demonstrating the variance-reduction benefit of larger networks.

\begin{figure}[htpb]
    \centering
    \includegraphics[width=\linewidth]{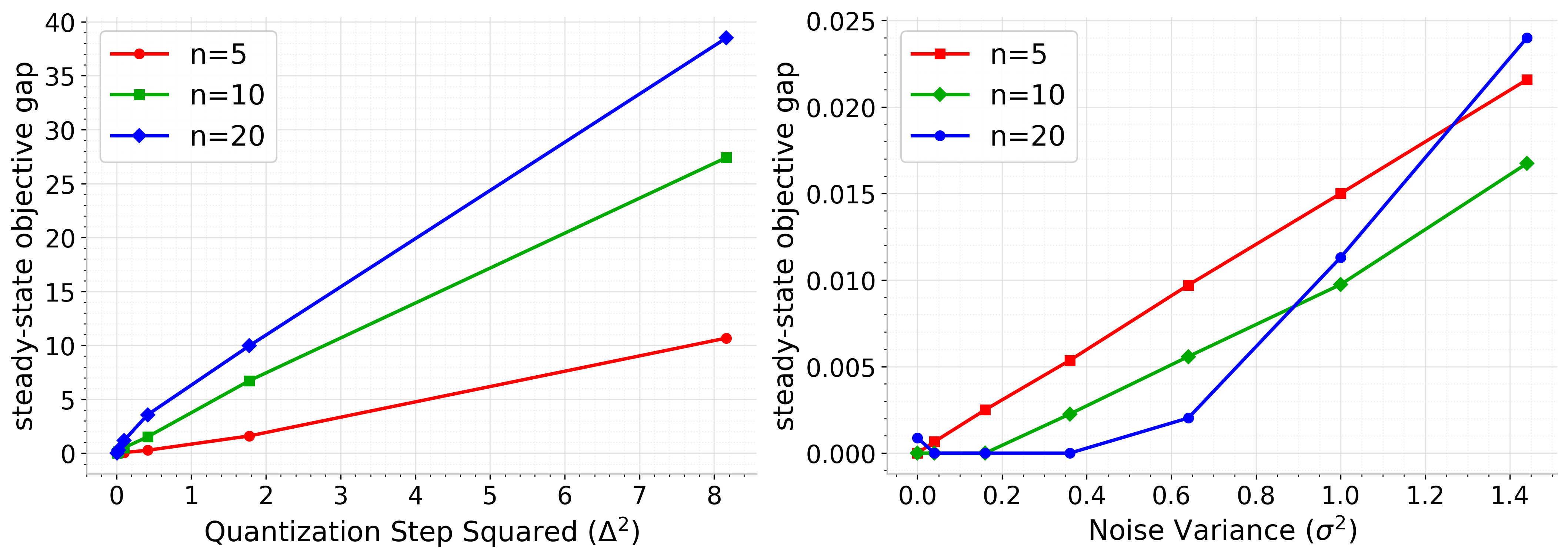}
    \caption{Asymptotic objective gap of \qpdgd{} for Example~1 versus (left) quantization step squared $\Delta^2$ and (right) gradient-noise variance $\sigma^2$, for multiple network sizes $n$. The linear trends corroborate the $\mathcal{O}(A\Delta^2 + B\sigma^2)$ steady-state bound from Theorem~\ref{thm:sq_pdgd_convergence} and the network scaling discussed in Remark~\ref{rem:AB_network}.}
    \label{fig:sensitivity}
\end{figure}

\begin{figure}[htpb]
    \centering
    \includegraphics[width=\linewidth]{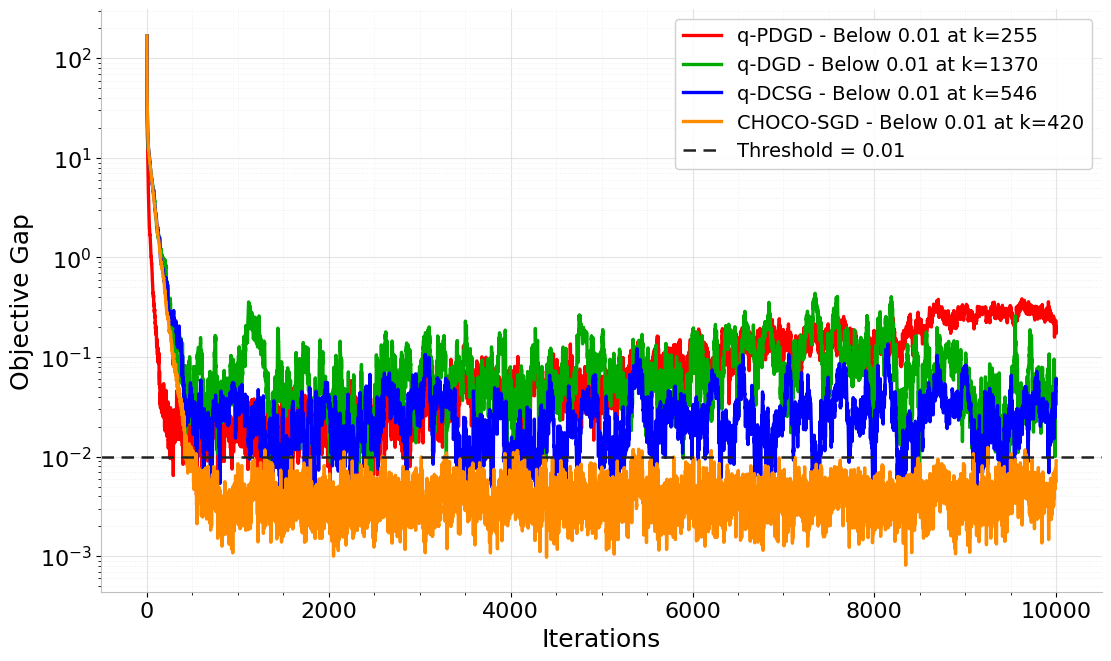}
    \caption{Iterations to reach a target objective threshold: \qpdgd{} versus \texttt{q-DGD} and \texttt{DCSG} on distributed least squares. \qpdgd{} requires markedly fewer iterations to reach the target threshold.}
    \label{fig:baseline_comparison}
\end{figure}

%% file: sections/ML.tex
\section{Empirical Evaluation on Distributed Deep Learning}
\label{app:dl-experiments}

The theoretical results in Section~\ref{sec:proofs} are stated under RSI (Assumption~3)
or PL (Assumption~4) on the global objective. Modern deep networks satisfy neither
condition globally: the loss landscape is non-convex with many saddle points and local
minima. Nevertheless, a practical algorithm motivated by these analyses should remain
robust in this regime, since (i)~the consensus and dual-tracking mechanism is
geometry-agnostic and (ii)~PL-like behavior is widely observed locally near
minima of overparameterized networks. We therefore stress-test \texttt{q-PDGD}
against representative quantized decentralized baselines on two image-classification
benchmarks where our assumptions are formally violated, and show that the
consensus advantage predicted by Theorem~\ref{thm:diminishing_step} continues to hold.

\paragraph{Setup.}
We train a small convolutional network collaboratively across $n=100$ agents
arranged in a ring topology, communicating once per local SGD step over a
$4$-bit stochastic uniform quantizer on the range $[-1, 1]$ (Eq.~\eqref{lem:quant}).
The model is a two-layer CNN ($1{\to}8{\to}16$ channels with $5{\times}5$ kernels and
$2{\times}2$ max-pooling, followed by a linear classifier) totaling $\approx\!11{,}000$
parameters per agent.
We evaluate on \textbf{MNIST} and \textbf{Fashion-MNIST}; the training set is
shuffled and split IID across the $100$ agents ($600$ images each), so all heterogeneity
in the trajectories arises from local stochastic gradients and quantization noise rather
than from data heterogeneity. Each agent draws a minibatch of size $32$ and runs for
$T=500$ communication rounds. All agents are initialized identically, and the random
seed (controlling minibatches and quantization noise) is reset before each algorithm,
so any difference between curves is attributable solely to the consensus mechanism.


\paragraph{Baselines and hyperparameters.}
We compare \texttt{q-PDGD} against \texttt{q-DGD}~\citep{Yuanetal2013},
\texttt{q-DCSG}~\citep{dutta20241}, and \texttt{CHOCO-SGD}~\citep{koloskova2019decentralized}.
The mixing matrix is $W = I - \omega L$ with $\omega = 1/(\Delta_{\max}+1)$, where
$\Delta_{\max}$ is the maximum degree.
For a fair comparison the gradient stepsize is fixed to $h=0.05$ across all algorithms,
so the comparison isolates the consensus dynamics.
\texttt{q-PDGD} uses $\alpha = 0.5$, $\beta = 0.5$;
\texttt{q-DCSG} uses diminishing $\alpha_k = 0.05/(1+0.005\,k)$,
$\beta_k = 0.5/(1+0.005\,k)$;
\texttt{CHOCO-SGD} uses consensus stepsize $\gamma = 0.5$.

\paragraph{Metrics.}
At each evaluation round we record (i)~per-agent test accuracy
$\{\mathrm{acc}(x_i)\}_{i=1}^n$, (ii)~the consensus error
$\tfrac{1}{n}\sum_i\|x_i - \bar x\|$, (iii)~mean training loss across agents, and
(iv)~the test accuracy of the \emph{averaged-parameter} model
$\bar x = \tfrac{1}{n}\sum_i x_i$, which is the model that would actually
be deployed.

\paragraph{Results.}
Figures~\ref{fig:mnist-main}--\ref{fig:fashion-disp} report the comparison.
Three observations are consistent across both datasets and align with our theory.

\emph{(1) Smallest consensus residual.}
Panel~(b) of Figs.~\ref{fig:mnist-main} and \ref{fig:fashion-main} (log scale) shows
that \texttt{q-PDGD} stabilizes at a consensus error roughly $4{-}8\times$ below
\texttt{CHOCO-SGD} and an order of magnitude below \texttt{q-DGD}/\texttt{q-DCSG}.
This is the structural advantage predicted by the dual variable $v_k$ in
Eq.~\eqref{eq:update_law}: it accumulates and feeds back disagreement induced by
quantized mixing, suppressing the residual neighborhood radius even when the
quantizer is aggressive ($4$ bits).

\emph{(2) Best deployed (consensus) model.}
Panel~(d) shows that the averaged model from \texttt{q-PDGD} dominates throughout
training. Tighter consensus means $\bar x$ remains close to every $x_i$, so
averaging does not destroy useful local progress; on Fashion-MNIST,
\texttt{q-DGD} loses several accuracy points purely to disagreement.

\emph{(3) Tight cross-agent distribution.}
Figures~\ref{fig:mnist-disp} and \ref{fig:fashion-disp} report the spread of
final per-agent accuracies. On MNIST, \texttt{q-PDGD} reaches
$\mu = 0.966$, $\sigma = 0.0050$, against \texttt{q-DGD}'s $\mu = 0.906$,
$\sigma = 0.0277$; on Fashion-MNIST, $\mu = 0.819$, $\sigma = 0.0158$ against
$\mu = 0.749$, $\sigma = 0.0205$. The histograms are visibly concentrated for
\texttt{q-PDGD}, indicating that no agent is left behind.

Although the global-geometry assumptions of Section~\ref{sub:ass} do not hold
for these non-convex CNN objectives, the qualitative behavior predicted by our
analysis---in particular, a markedly smaller asymptotic consensus neighborhood
than competing quantized decentralized methods---persists, suggesting that the
primal--dual disagreement-tracking mechanism is useful well beyond the regime
covered by the formal guarantees.

\begin{figure}[t]
    \centering
    \includegraphics[width=\linewidth]{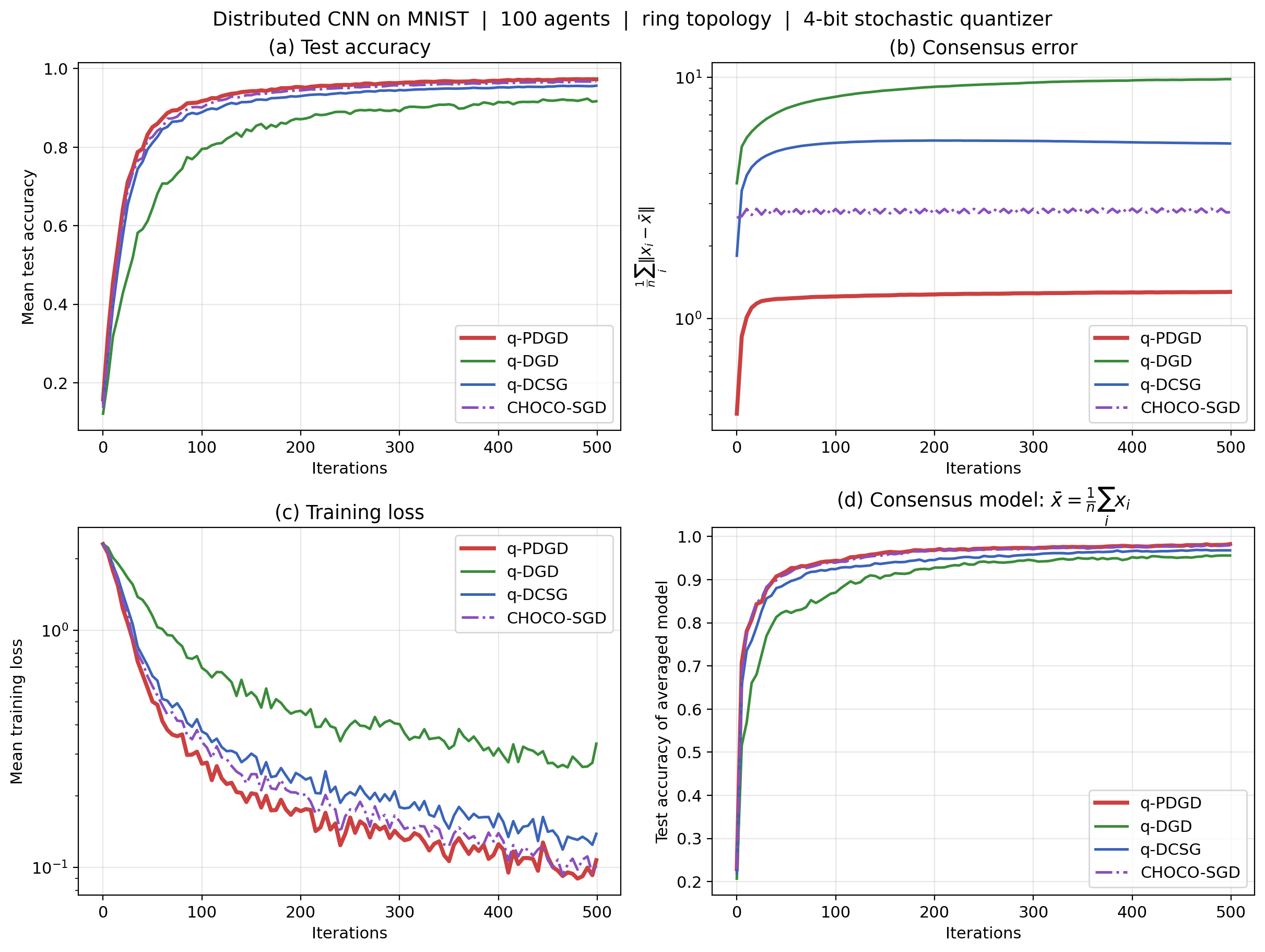}
    \caption{\textbf{MNIST, $100$ agents, ring topology, $4$-bit stochastic quantizer.}
    (a)~Mean per-agent test accuracy. (b)~Consensus error $\tfrac{1}{n}\sum_i\|x_i-\bar x\|$
    (log scale): \texttt{q-PDGD} is roughly an order of magnitude lower than the baselines.
    (c)~Mean training loss. (d)~Test accuracy of the averaged (deployed) model
    $\bar x = \tfrac{1}{n}\sum_i x_i$.}
    \label{fig:mnist-main}
\end{figure}

\begin{figure}[t]
    \centering
    \includegraphics[width=\linewidth]{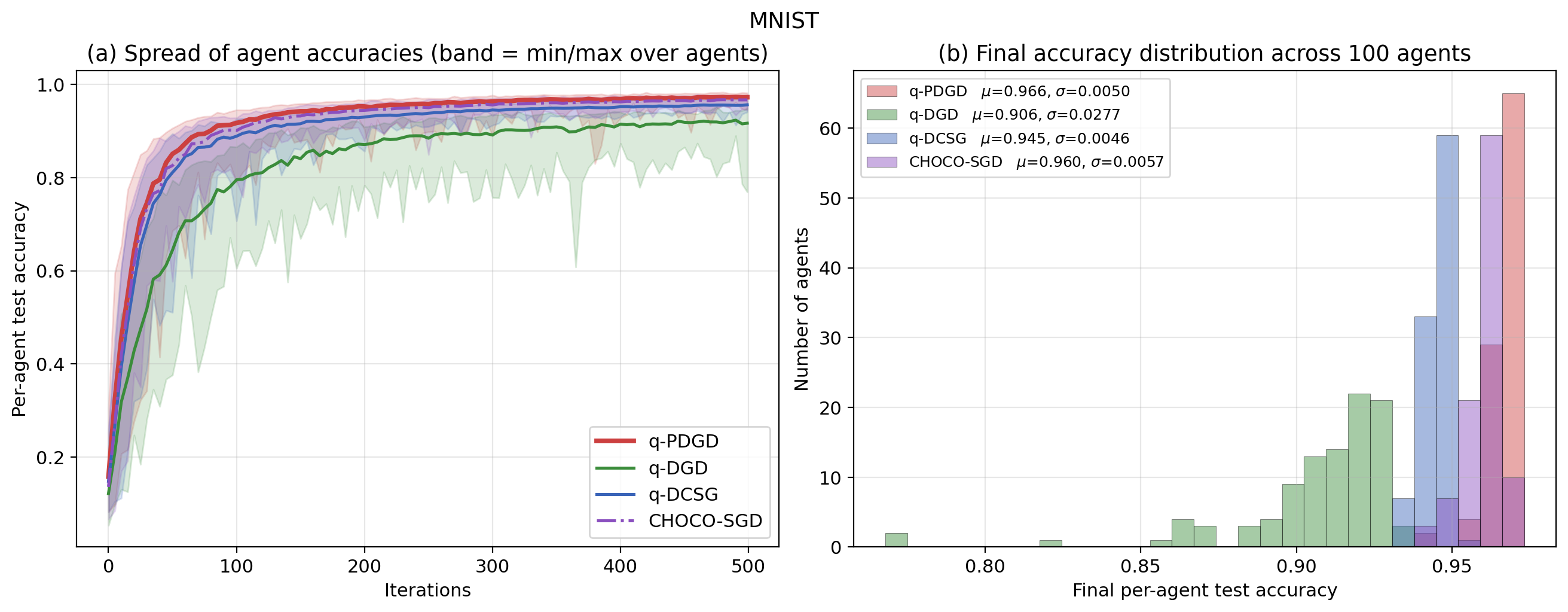}
    \caption{\textbf{MNIST: dispersion across the $100$ agents.}
    (a)~Mean accuracy with shaded min/max envelope. (b)~Histogram of final
    per-agent test accuracies. \texttt{q-PDGD} produces the right-most and
    most concentrated distribution.}
    \label{fig:mnist-disp}
\end{figure}

\begin{figure}[t]
    \centering
    \includegraphics[width=\linewidth]{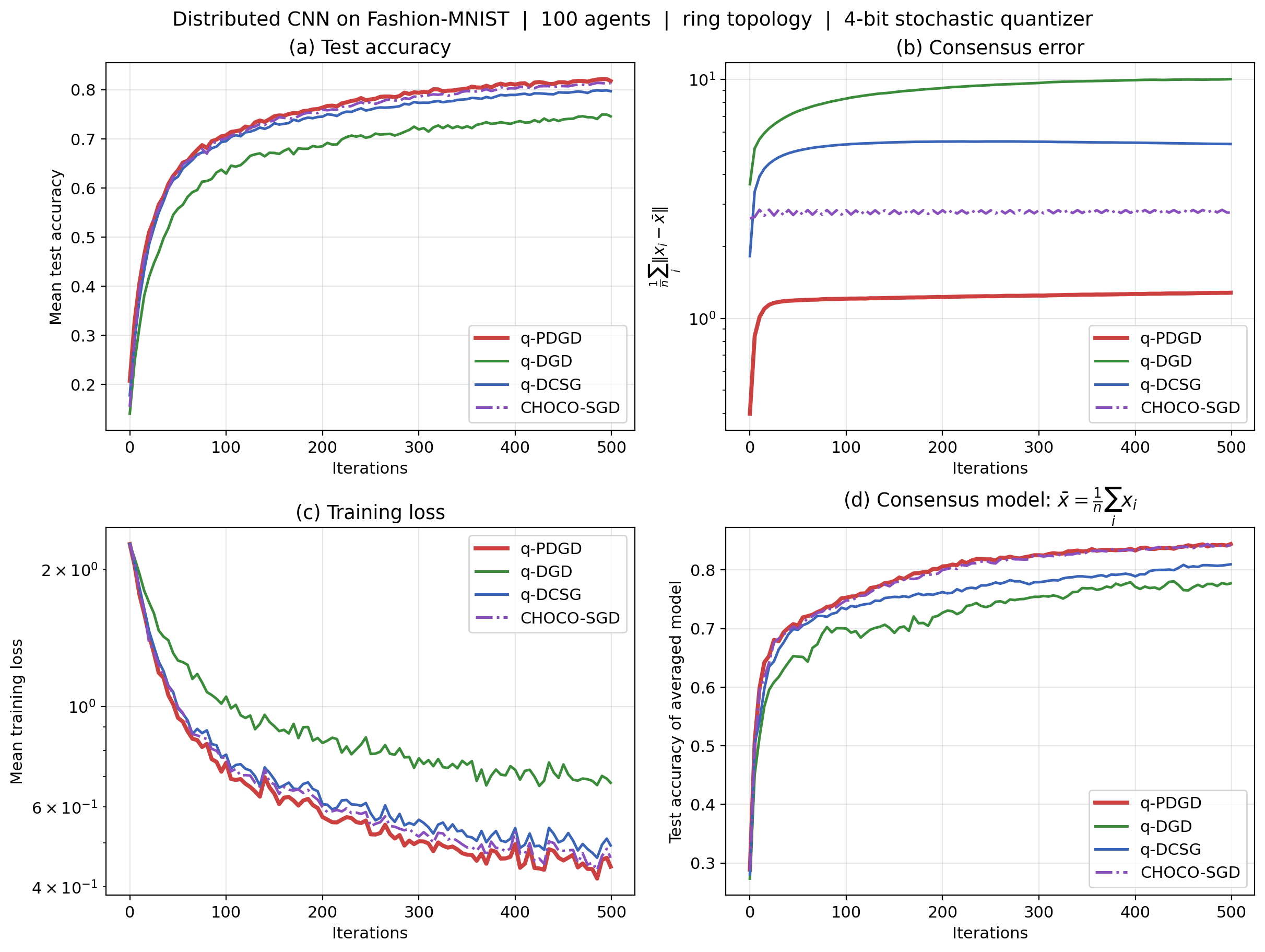}
    \caption{\textbf{Fashion-MNIST, $100$ agents, ring topology, $4$-bit stochastic
    quantizer.} Same panels as Fig.~\ref{fig:mnist-main}. The harder task amplifies
    the gap between algorithms in panel~(d): tight consensus directly translates
    into a better deployed model.}
    \label{fig:fashion-main}
\end{figure}

\begin{figure}[t]
    \centering
    \includegraphics[width=\linewidth]{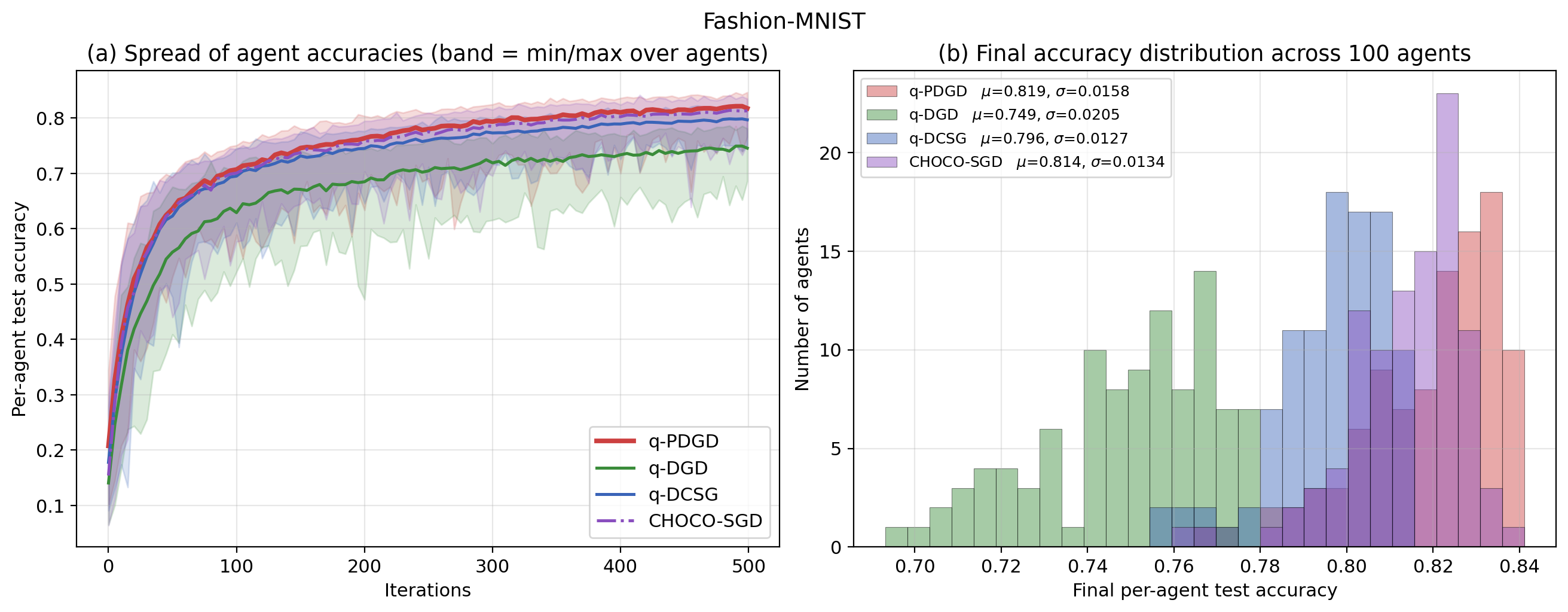}
    \caption{\textbf{Fashion-MNIST: dispersion across the $100$ agents.}
    Final per-agent accuracies: \texttt{q-PDGD} $\mu{=}0.819$, $\sigma{=}0.0158$;
    \texttt{CHOCO-SGD} $\mu{=}0.814$, $\sigma{=}0.0134$;
    \texttt{q-DCSG} $\mu{=}0.796$, $\sigma{=}0.0127$;
    \texttt{q-DGD} $\mu{=}0.749$, $\sigma{=}0.0205$.}
    \label{fig:fashion-disp}
\end{figure}